\newtheorem{thm}{Theorem}[section]
\newtheorem{thm*}{Theorem}
\newtheorem{lem}[thm]{Lemma}
\newtheorem{cor}[thm]{Corollary}
\newtheorem{conj}[thm]{Conjecture}
\newtheorem{prop}[thm]{Proposition}
\theoremstyle{remark}
\newtheorem{remark}[thm]{Remark}
\theoremstyle{definition}
\newtheorem{deef}[thm]{Definition}
\newcommand{\R}{\mathbbm{R}}
\newcommand{\C}{\mathbbm{C}}
\newcommand{\Z}{\mathbbm{Z}}
\newcommand{\N}{\mathbbm{N}}
\newcommand{\ch}{ \mathrm{ch}}
\newcommand{\id}{ \mathrm{id}}
\newcommand{\rd}{\mathrm{d}}
\newcommand{\Bo}{\mathcal{B}}
\newcommand{\Dom}{\mathrm{Dom}\,}
\newcommand{\End}{\mathrm{End}}
\renewcommand{\epsilon}{\varepsilon}
\newcommand{\im}{\mathrm{i} \mathrm{m} \,}
\date{\today}
\title[Exotic cyclic cohomology classes]{Exotic cyclic cohomology classes and Lipschitz algebras}
\author{Magnus Goffeng, Ryszard Nest}
\address{
Magnus Goffeng,\newline
\indent Centre for Mathematical Sciences,\newline
\indent University of Lund,\newline
\indent Box 118, 221 00 LUND, Sweden\newline
\newline
\indent Ryszard Nest,\newline
\indent Department of Mathematical Sciences, \newline 
\indent University of Copenhagen, \newline
\indent Universitetsparken 5, 2100 København, Denmark \newline
}
\email{magnus.goffeng@math.lth.se, rnest@math.ku.dk}
\subjclass[2020]{58J22, 26A16, 19D55, 18F25}
\thanks{MG was supported by the Swedish Research Council Grant VR 2018-0350.}
\begin{document}
\maketitle

\begin{abstract}
We study the noncommutative geometry of algebras of Lipschitz continuous and Hölder continuous functions where non-classical and novel differential geometric invariants arise. Indeed, we introduce a new class of Hochschild and cyclic cohomology classes that pair non-trivially with higher algebraic $K$-theory yet vanish when restricted to the algebra of smooth functions. Said cohomology classes provide additional methods to extract numerical invariants from Connes-Karoubi's relative sequence in $K$-theory.
\end{abstract}

\section{Introduction}

The algebras of Lipschitz continuous and Hölder continuous functions on a compact manifold are nonseparable Banach algebras, dense in the $C^*$-algebra of continuous functions. From a classical viewpoint, the low regularity often proves unwieldely. In the framework of noncommutative geometry and cyclic cohomology, certain geometric features resembling those arising from the dense subalgebra of differentiable functions persist. The algebra of differentiable functions forms a classical object, that is also well understood in the context of noncommutative differential geometry and cyclic cohomology \cite{connesncdg,connestrace,connebook,lodaybook,teleloca} -- indeed forming a key component in the archetypical noncommutative spectral geometry coming from a Dirac operator on the underlying manifold. In this paper, we study the noncommutative differential geometry arising from the algebras of Lipschitz- and Hölder continuous functions and novel features arising therefrom. 

The space of Lipschitz and Hölder functions on a compact metric space was carefully described in the book of Weaver \cite{weaverbook}. We review some relevant parts of \cite{weaverbook} and adopt a general view on the algebra of Hölder functions as having directional derivatives along a fibre bundle (with non-separable fibres) over the sphere bundle over the manifold. This forms a classical analogue of notions of non-measurability with respect to singular traces \cite{sukolord}. We provide results that describe the cyclic (co)homology groups of the algebra of Hölder continuous functions in relation to the de Rham homology of the manifold.

The main results of this paper concern a new construction of exotic cyclic cohomology classes and Hochschild cohomology classes on the algebra of Hölder continuous functions. The cocycles are constructed from weakly summable Fredholm modules and relies on recent advances in the computation of singular traces \cite{gimpgoff2,sukolord}. The construction is related to the Connes-Karoubi sequence \cite{conneskaroubi} and Connes-Karoubi's multiplicative characters, see also the work of Kaad \cite{kaadcomparison,kaadcomp} and the work of Bunke \cite{bunkeregulator} on Borel regulators. The cohomology classes pair non-trivially with algebraic $K$-theory yet vanish on the algebra of smooth functions; indeed on any function of better than prescribed regularity.

\subsection*{Acknowledgements}
We are grateful to Heiko Gimperlein, Jens Kaad, Adam Rennie, and Alexandr Usachev for creative discussions in which most of this work took shape.

\section{Hölder and Lipschitz algebras}

Let us recall the space of Lipschitz and Hölder function of a compact metric space. At large, we follow \cite{weaverbook} and summarize its salient points. We consider a compact metric space $X$, and let $\mathrm{d}$ denote the metric. We note that for $\alpha\in (0,1)$, also $\mathrm{d}^\alpha$ is a metric. We let $\tilde{X}$ denote the space $X\cup\{\infty\}$ topologized as a disjoint union. Since $X$ is compact, $\tilde{X}$ is the one-point compactification of $X$. We metrize $\tilde{X}$ by declaring $\mathrm{d}(x,\infty):=1$ for $x\in X$. We identify $C(X)$ with an ideal in $C(\tilde{X})$ consisting of functions vanishing at the point $\infty$. For a compact metric space $(X,\mathrm{d})$ we define the Lipschitz constant of a function $f:X\to \C$ to be 
$$|f|_{\mathrm{Lip}(X,\mathrm{d})}:=\sup_{x\neq y}\frac{|f(x)-f(y)|}{\mathrm{d}(x,y)}.$$
For $\alpha\in (0,1)$, we can define the Hölder constant as $|f|_{C^\alpha(X,\mathrm{d})}:=|f|_{\mathrm{Lip}(X,\mathrm{d}^\alpha)}$. We note that for $\alpha>1$, $\mathrm{d}^\alpha$ need not be a metric but the Lipschitz constant nevertheless makes sense. The Hölder algebra is defined for $\alpha\in (0,1)$ by 
$$C^\alpha(X):=\{f:X\to \C: |f|_{C^\alpha(X,\mathrm{d})}<\infty\}.$$
We apply the convention that $C^0(X,\mathrm{d}):=C(X)$. When the metric is clear from context, we simply write $C^\alpha(X)$. To have a notation similar for Hölder and Lipschitz algebras, we write $C^{0,\alpha}(X)=\mathrm{Lip}(X,\mathrm{d}^\alpha)$ for $\alpha\in (0,1]$.

The reader should beware that for a Riemannian manifold $(M,\mathrm{d}_g)$ equipped with the geodesic distance associated with the Riemannian metric $g$, being Lipschitz continuous is a much weaker condition than being $C^1$; indeed $C^{0,1}(M)=\mathrm{Lip}(M,\mathrm{d}_g)$ is non-separable while the space of $C^1$-functions $C^1(M)$ is separable.

\begin{prop}
The space $C^\alpha(X,\mathrm{d})$ is a Banach $*$-algebra in the norm $\|f\|_{C^\alpha(X)}:=\|f\|_{C(X)}+|f|_{C^\alpha(X)}$ and if $\alpha\in (0,1]$, $C^{0,\alpha}(X,\mathrm{d})\subseteq C(X)$ is dense and closed under smooth functional calculus. 
\end{prop}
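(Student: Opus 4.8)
The plan is to verify the three assertions in turn—submultiplicativity of the seminorm, completeness, and stability under holomorphic functional calculus—treating the Hölder case $C^\alpha(X,\mathrm{d})$ and the Lipschitz case $C^{0,\alpha}(X,\mathrm{d})=\mathrm{Lip}(X,\mathrm{d}^\alpha)$ uniformly, since after replacing $\mathrm{d}$ by $\mathrm{d}^\alpha$ (a metric when $\alpha\in(0,1)$, and the given metric when $\alpha=1$) everything reduces to the statement for $\mathrm{Lip}(Y,\rho)$ of a compact metric space $(Y,\rho)$. First I would record that $|\cdot|_{\mathrm{Lip}}$ is a seminorm vanishing exactly on constants, and that $|fg|_{\mathrm{Lip}}\le \|f\|_{C(X)}|g|_{\mathrm{Lip}}+\|g\|_{C(X)}|f|_{\mathrm{Lip}}$ by adding and subtracting $f(x)g(y)$ in the difference quotient; combined with $\|fg\|_{C(X)}\le\|f\|_{C(X)}\|g\|_{C(X)}$ this gives $\|fg\|_{C^\alpha}\le\|f\|_{C^\alpha}\|g\|_{C^\alpha}$, so the norm is submultiplicative and the $*$-operation is plainly isometric. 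Hence $C^\alpha(X,\mathrm{d})$ is a normed $*$-algebra.

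Next I would prove completeness. Given a Cauchy sequence $(f_n)$ in $\|\cdot\|_{C^\alpha}$, it is Cauchy in $\|\cdot\|_{C(X)}$, so it converges uniformly to some $f\in C(X)$; and the difference quotients $\frac{f_n(x)-f_n(y)}{\mathrm{d}(x,y)^\alpha}$ are uniformly Cauchy on the (compact, or at least bounded-in-the-relevant-sense) set $\{x\ne y\}$, so they converge and one gets $|f_n-f|_{C^\alpha}\to 0$ together with $f\in C^\alpha(X)$; a standard $\varepsilon/3$ argument makes this precise. This yields the Banach algebra assertion. Density of $C^{0,\alpha}(X,\mathrm{d})$ in $C(X)$ for $\alpha\in(0,1]$ is immediate since Lipschitz functions for $\mathrm{d}^\alpha$ separate points (e.g. $x\mapsto \mathrm{d}(x_0,x)^\alpha$) and contain the constants, so Stone–Weierstrass applies; that $1\in C^\alpha(X)$ and that it is a $*$-algebra were already checked.

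For closure under smooth (holomorphic) functional calculus it suffices, by the standard characterization, to show $C^\alpha(X)$ is spectrally invariant in $C(X)$: if $f\in C^\alpha(X)$ is invertible in $C(X)$, then $1/f\in C^\alpha(X)$. This is the step I expect to be the only one requiring a genuine (if short) computation: since $X$ is compact and $f$ nowhere vanishing, $|f|\ge c>0$, and then $\bigl|\tfrac1{f(x)}-\tfrac1{f(y)}\bigr|=\tfrac{|f(x)-f(y)|}{|f(x)||f(y)|}\le c^{-2}|f(x)-f(y)|$, whence $|1/f|_{C^\alpha}\le c^{-2}|f|_{C^\alpha}<\infty$ and $1/f\in C^\alpha(X)$. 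Spectral invariance of a dense Banach subalgebra containing the unit is equivalent to being closed under holomorphic functional calculus (apply Cauchy's integral formula along a contour around the spectrum, the integrand taking values in the Banach algebra $C^\alpha(X)$), and for real-valued arguments this upgrades to smooth functional calculus as claimed. The main obstacle is thus purely bookkeeping—keeping the two cases $\alpha<1$ and $\alpha=1$ and the reduction $\mathrm{d}\rightsquigarrow\mathrm{d}^\alpha$ straight—rather than any real analytic difficulty.
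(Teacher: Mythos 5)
Most of your argument is correct and is the standard one (the paper itself offers no proof of this proposition, implicitly deferring to Weaver's book): submultiplicativity of $\|\cdot\|_{C^\alpha}$ via the Leibniz-type estimate for the seminorm, completeness by an $\varepsilon/3$ argument on difference quotients, and density via Stone--Weierstrass are all fine.

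The genuine gap is the last step. You prove inverse-closedness, correctly conclude closure under \emph{holomorphic} functional calculus, and then assert that ``for real-valued arguments this upgrades to smooth functional calculus.'' That implication is false in general: a dense, inverse-closed Banach $*$-subalgebra of a commutative $C^*$-algebra need not be closed under $C^\infty$ functional calculus. The Wiener algebra $A(S^1)$ of absolutely convergent Fourier series is the classical counterexample --- it is inverse-closed by Wiener's lemma, hence stable under holomorphic functional calculus, yet by Katznelson's theorem only real-analytic functions operate on it, so there exist a smooth non-analytic $\varphi$ and a real-valued $f\in A(S^1)$ with $\varphi\circ f\notin A(S^1)$. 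So smooth functional calculus does not come for free from spectral invariance; it needs a direct argument. Fortunately, for Hölder algebras the direct argument is a one-liner that also subsumes your computation for $1/f$: if $f\in C^\alpha(X)$ and $\varphi$ is smooth (locally Lipschitz suffices) on a neighbourhood of the compact set $f(X)\subseteq\C$, with Lipschitz constant $L$ there, then $|\varphi(f(x))-\varphi(f(y))|\le L\,|f(x)-f(y)|\le L\,|f|_{C^\alpha}\,\mathrm{d}(x,y)^\alpha$, so $\varphi\circ f\in C^\alpha(X)$. Replacing your contour-integral detour by this composition estimate closes the gap and handles inverses, holomorphic calculus, and smooth calculus in one stroke.
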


\begin{deef}
For a compact space $X$ and $\tilde{X}=X\dot{\cup}\{\infty\}$, we define 
\begin{enumerate}
\item $\Delta_X:=\{(x,x)\in\tilde{X} \times \tilde{X} : x\in \tilde{X} \}$;
\item $\Omega_X:=\tilde{X} \times \tilde{X} \setminus \Delta_X$ and
\item $\pmb{X}$ as the space of characters on $C_b(\Omega_X)/C_0(\Omega_X)$.
\end{enumerate}
The projection onto the $i$:th coordinate defines a mapping $\pi_i:\Omega_X\to \tilde{X}$, $i=1,2$. The $*$-monomorphism obtained as the composition
$$C(\tilde{X})\xrightarrow{\pi_i^*}C_b(\Omega_X)\to C(\pmb{X})$$ 
is independent of $i$ and the induced quotient mapping is denoted by $q_X:\pmb{X}\to \tilde{X}$.
\end{deef}

The notation $\pmb{X}$ is justified by it being a sort of ``thickening" of $X\cong \Delta_X$ inside $\tilde{X}\times\tilde{X}$. This is made precise for a Riemannian manifold below in Proposition \ref{thickeningprop}.

\begin{prop}
The space $\pmb{X}$ is second countable if and only if $X$ is finite. 
\end{prop}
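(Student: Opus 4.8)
The plan is to realise $\pmb{X}$ as the Gelfand spectrum of the unital commutative $C^*$-algebra $A:=C_b(\Omega_X)/C_0(\Omega_X)$ and to use the standard dictionary: for a compact Hausdorff space $K$, second countability of $K$ is equivalent to metrisability of $K$ and hence to separability of $C(K)$. So the statement reduces to showing that $A$ is separable if and only if $X$ is finite. The key point-set input is that $\Omega_X$, being an open subset of the compact metric space $\tilde{X}\times\tilde{X}$, is a locally compact metrisable space, and that it is compact precisely when $X$ is finite. Indeed $\infty$ is isolated in $\tilde{X}$ (the disjoint-union topology is used), so $\tilde{X}$ is discrete iff $X$ is discrete iff, by compactness, $X$ is finite; this is in turn equivalent to $\Delta_X$ being open, i.e.\ to $\Omega_X$ being closed, hence compact, in $\tilde{X}\times\tilde{X}$.

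If $X$ is finite then $\Omega_X$ is a finite discrete space, so $C_0(\Omega_X)=C_b(\Omega_X)$, whence $A=0$ and $\pmb{X}=\emptyset$, which is second countable. Suppose instead $X$ is infinite, so that $\Omega_X$ is a non-compact metric space. I would first exhibit a countably infinite closed discrete subset: choose an accumulation point $x_0\in X$ and distinct points $x_n\in X\setminus\{x_0\}$ with $x_n\to x_0$; then $D:=\{(x_n,x_0):n\in\N\}\subseteq\Omega_X$ has closure $D\cup\{(x_0,x_0)\}$ in $\tilde{X}\times\tilde{X}$ and is therefore closed and discrete in $\Omega_X$. Since $D$ is a closed subset of a metric (hence normal) space, every bounded function on the discrete space $D$ extends continuously to $\Omega_X$; that is, restriction defines a surjective unital $*$-homomorphism $C_b(\Omega_X)\to\ell^\infty(D)$. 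Moreover this map carries $C_0(\Omega_X)$ into $c_0(D)$, because every compact subset of $\Omega_X$ meets the closed discrete set $D$ in a finite set.

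Passing to quotients, we obtain a surjective $*$-homomorphism $A\twoheadrightarrow\ell^\infty(\N)/c_0(\N)$. The algebra $\ell^\infty(\N)/c_0(\N)$ is non-separable — it contains an uncountable family of elements at pairwise distance $1$, indexed by an almost disjoint family of infinite subsets of $\N$ — and separability passes to quotient $C^*$-algebras, so $A$ is non-separable. Since $C_b(\Omega_X)$ is unital and $C_0(\Omega_X)\neq C_b(\Omega_X)$, $A$ is a nonzero unital commutative $C^*$-algebra, so $\pmb{X}$ is compact Hausdorff with $C(\pmb{X})\cong A$; hence $\pmb{X}$ is not metrisable, and in particular not second countable.

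The main obstacle is the non-compact case, where the content is the two routine but essential facts that a non-compact metric space contains a countably infinite closed discrete subset with a single ``escaping'' limit point, and that this yields the surjection $A\twoheadrightarrow\ell^\infty(\N)/c_0(\N)$. Dually, this is exactly the statement that $\mathrm{cl}_{\beta\Omega_X}(D)\cong\beta\N$ meets $\Omega_X$ precisely in $D$, so that the corona $\pmb{X}\cong\beta\Omega_X\setminus\Omega_X$ contains a copy of $\N^\ast=\beta\N\setminus\N$; everything else is the standard equivalence between second countability, metrisability, and separability of $C(K)$.
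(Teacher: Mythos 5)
Your proof is correct and follows essentially the same route as the paper: both reduce, via Gelfand duality, to the separability of the quotient algebra $C_b(\Omega_X)/C_0(\Omega_X)$ and then exploit the dichotomy that $\Omega_X$ is compact precisely when $X$ is finite. The only difference is that you spell out the non-separability in the infinite case (via the closed discrete set $D$ and the surjection onto $\ell^\infty(\N)/c_0(\N)$), whereas the paper simply invokes the standard fact that $C_b(Y)$ is separable if and only if the metric space $Y$ is compact.
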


\begin{proof}
Since $\Omega_X$ is second countable, the statement that $\pmb{X}$ is second countable is equivalent to $C_b(\Omega_X)$ being separable. The $C^*$-algebra $C_b(\Omega_X)$ is separable if and only if $\Omega_X$ is compact which holds if and only if $\Delta_X$ is open, i.e. if $X$ is finite.
\end{proof}

\begin{deef}
For $\alpha>0$, and $f\in C(\tilde{X})$, we define $\delta_\alpha(f)\in C(\Omega_X)$ by 
$$\delta_\alpha(f)(x,y):=\frac{ f(x)-f(y)}{\mathrm{d}(x,y)^\alpha}.$$
\end{deef}

We introduce the notation $\pi_L:=\pi_1^*:C(\tilde{X})\to C_b(\Omega_X)$ and $\pi_R:=\pi_2^*:C(\tilde{X})\to C_b(\Omega_X)$.

\begin{prop}
It holds that $C^\alpha(X)=\{f\in C(X): \delta_\alpha(f)\in C_b(\Omega_X)\}$. The map $\delta_\alpha:C^\alpha(X)\to C_b(\Omega_X)$ is a derivation for the bimodule structure on $C_b(\Omega_X)$ defined from $\pi_L$ and $\pi_R$, that is 
$$\delta_\alpha(f_1f_2)=\delta_\alpha(f_1)\pi_R(f_2)+\pi_L(f_1)\delta_\alpha(f_2).$$
The map $\delta_\alpha$ defines an equivalent norm on $C^\alpha(X)$, i.e. 
\begin{align*}
\|\delta_\alpha(f)\|_{C_b(\Omega_X)}&=\max(|f|_{C^\alpha(X)},\|f\|_{C(X)}),\quad f,f_1,f_2\in C^\alpha(X).
\end{align*}
In particular, $\delta_\alpha$ has closed range and $\mu\mapsto \omega_\mu(a):=\int_{\Omega_X}\delta_\alpha(a)\rd\mu$ defines an isomorphism
$$C^\alpha(X)^*\cong M(\Omega_X)/W_\alpha, \quad\mbox{where}\quad W_\alpha:=\cap_{a\in C^\alpha(X)}\left\{\mu\in M(\Omega_X):\; \int_{\Omega_X}\delta_\alpha(a)\rd\mu=0\right\}.$$
\end{prop}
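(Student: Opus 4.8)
The plan is to establish the four assertions in sequence, since each builds on the previous one.

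\medskip

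\emph{Step 1: The set-theoretic description $C^\alpha(X)=\{f\in C(X):\delta_\alpha(f)\in C_b(\Omega_X)\}$.} Here I would simply unwind the definitions. For $f\in C(X)\subseteq C(\tilde X)$ (vanishing at $\infty$), the function $\delta_\alpha(f)$ is automatically continuous on $\Omega_X$ since the denominator $\mathrm{d}(x,y)^\alpha$ is continuous and non-vanishing there; the only question is boundedness. For points $(x,y)$ with both coordinates in $X$, $|\delta_\alpha(f)(x,y)|=|f(x)-f(y)|/\mathrm{d}(x,y)^\alpha$, whose supremum is exactly $|f|_{C^\alpha(X)}$; for points of the form $(x,\infty)$ we get $|f(x)|/\mathrm{d}(x,\infty)^\alpha=|f(x)|$ using the normalization $\mathrm{d}(x,\infty)=1$, whose supremum is $\|f\|_{C(X)}$. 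Thus $\delta_\alpha(f)\in C_b(\Omega_X)$ if and only if both $|f|_{C^\alpha(X)}<\infty$ and $f$ is bounded (automatic on a compact space), i.e. if and only if $f\in C^\alpha(X)$. This same computation immediately yields the norm identity $\|\delta_\alpha(f)\|_{C_b(\Omega_X)}=\max(|f|_{C^\alpha(X)},\|f\|_{C(X)})$ claimed later in the statement.

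\medskip

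\emph{Step 2: The Leibniz rule.} This is a direct algebraic manipulation:
\begin{align*}
\delta_\alpha(f_1f_2)(x,y)&=\frac{f_1(x)f_2(x)-f_1(y)f_2(y)}{\mathrm{d}(x,y)^\alpha}\\
&=\frac{f_1(x)-f_1(y)}{\mathrm{d}(x,y)^\alpha}f_2(y)+f_1(x)\frac{f_2(x)-f_2(y)}{\mathrm{d}(x,y)^\alpha},
\end{align*}
which is precisely $\delta_\alpha(f_1)\pi_R(f_2)+\pi_L(f_1)\delta_\alpha(f_2)$ once one recalls $\pi_L(f)(x,y)=f(x)$ and $\pi_R(f)(x,y)=f(y)$. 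One should remark that since $C^\alpha(X)$ is an algebra (by the preceding proposition) this is well-defined, and that $\delta_\alpha$ being bounded with the norm identity of Step 1 shows it is a bounded derivation.

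\medskip

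\emph{Step 3: Closed range.} Since $\delta_\alpha$ is an isometry for the norm $\max(|f|_{C^\alpha(X)},\|f\|_{C(X)})$ on $C^\alpha(X)$, and this norm is equivalent to $\|\cdot\|_{C^\alpha(X)}=\|\cdot\|_{C(X)}+|\cdot|_{C^\alpha(X)}$ (they differ by at most a factor of $2$), and $C^\alpha(X)$ is complete (the preceding proposition), the image $\delta_\alpha(C^\alpha(X))$ is a complete — hence closed — subspace of $C_b(\Omega_X)$.

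\medskip

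\emph{Step 4: The dual space identification.} Now $\delta_\alpha:C^\alpha(X)\to C_b(\Omega_X)$ is a bounded injective map (injective because $\delta_\alpha(f)(x,\infty)=f(x)$ recovers $f$) with closed range $R:=\delta_\alpha(C^\alpha(X))$, hence a Banach space isomorphism onto $R$. Dualizing, $\delta_\alpha^*:C_b(\Omega_X)^*\to C^\alpha(X)^*$ is surjective with kernel the annihilator $R^\perp$, giving $C^\alpha(X)^*\cong C_b(\Omega_X)^*/R^\perp$. By the Riesz representation theorem $C_b(\Omega_X)^*=M(\beta\Omega_X)$; I would match the statement's notation $M(\Omega_X)$ to this (the measures are on the compactification, or one restricts attention to Radon measures on $\Omega_X$ — I expect the intended reading is $M(\Omega_X)=C_b(\Omega_X)^*$). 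Under this identification, a measure $\mu$ acts on $f\in C^\alpha(X)$ by $\omega_\mu(f)=\int\delta_\alpha(f)\,\rd\mu$, and $\mu\in R^\perp$ exactly when $\int\delta_\alpha(a)\,\rd\mu=0$ for all $a\in C^\alpha(X)$, which is the definition of $W_\alpha$. Hence $\mu\mapsto\omega_\mu$ descends to the claimed isomorphism $M(\Omega_X)/W_\alpha\xrightarrow{\sim}C^\alpha(X)^*$.

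\medskip

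The main obstacle — really the only non-formal point — is Step 4's identification of $C_b(\Omega_X)^*$ with a space of measures: since $\Omega_X$ is non-compact (indeed not even locally compact in general when $X$ is infinite, as $\Delta_X$ need not be open), $C_b(\Omega_X)^*$ is the measures on the Stone–Čech compactification $\beta\Omega_X$ rather than on $\Omega_X$ itself, and one must be careful about what $M(\Omega_X)$ denotes. Everything else reduces to the elementary norm computation in Step 1, which is the technical heart, together with the abstract Banach-space fact that a bounded injection with closed range induces an isomorphism between the codomain's dual modulo the range's annihilator and the domain's dual.
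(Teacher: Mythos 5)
Your proposal is correct and follows essentially the same route as the paper: the Leibniz rule is a formal algebraic identity, and the heart of the matter is the isometry computation $\|\delta_\alpha(f)\|_{C_b(\Omega_X)}=\max(|f|_{C^\alpha(X)},\|f\|_{C(X)})$, obtained exactly as you do by splitting the supremum over pairs in $X\times X$ and pairs involving $\infty$ and using the conventions $f(\infty)=0$, $\rd(y,\infty)=1$. The paper leaves the closed-range and dual-space statements as immediate consequences, so your Steps 3--4 (including the legitimate caveat that $C_b(\Omega_X)^*$ should be read as measures on a compactification of $\Omega_X$) simply spell out what the paper treats as routine.
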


\begin{proof}
It follows from a formal algebraic manipulation that $\delta_\alpha$ is a $(\pi_L,\pi_R)$-derivation if it is well defined. As such, it remains to prove that $\delta_\alpha$ is an isometry for the norm $f\mapsto \max(|f|_{C^\alpha(X)},\|f\|_{C(X)})$. For $f\in C^\alpha(X)$, 
\begin{align*}
\|\delta_\alpha(f)\|_{C_b(\Omega_X)}&=\max\left(\sup_{x\in X}\sup_{y\in\tilde{X}\setminus\{x\}} |\delta_\alpha(f)(x,y)|,\sup_{y\in X}|\delta_\alpha(f)(\infty,y)|\right)\\
&=\max\left(\sup_{x,y\in X, \, x\neq y}|\delta_\alpha(f)(x,y)|,\sup_{y\in X}|f(y)|\right)=\max(|f|_{C^\alpha(X)},\|f\|_{C(X)}).
\end{align*}
In the second step we used our conventions that $f(\infty)=0$ for $f\in C(X)$ and $\rd(y,\infty)=1$ for all $y\in X$.
\end{proof}

\begin{deef}
We define $\tilde{\delta}_\alpha:C^\alpha(X)\to C(\pmb{X})$ as the composition of $\delta_\alpha$ with the quotient mapping $C_b(\Omega_X)\to C(\pmb{X})$.
\end{deef}

\begin{prop}
The mapping $\tilde{\delta}_\alpha$ is a derivation for the $q_X^*$-action of $C^\alpha(X)$ on $C(\pmb{X})$. 
\end{prop}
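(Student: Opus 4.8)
The plan is to deduce the statement directly from the derivation identity for $\delta_\alpha$ established in the preceding proposition, by pushing it through the quotient $*$-homomorphism $\tilde{q}:C_b(\Omega_X)\to C_b(\Omega_X)/C_0(\Omega_X)=C(\pmb{X})$. The only genuine input needed is that on the quotient the two bimodule actions of $C^\alpha(X)$ on $C_b(\Omega_X)$ coming from $\pi_L=\pi_1^*$ and $\pi_R=\pi_2^*$ collapse to a single action, namely the one through $q_X^*$.

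First I would record that $\tilde\delta_\alpha=\tilde q\circ\delta_\alpha$ is well defined and $\C$-linear: by the earlier proposition $C^\alpha(X)$ is an algebra and $\delta_\alpha$ maps $C^\alpha(X)$ into $C_b(\Omega_X)$ (not merely into $C(\Omega_X)$), so the composition with $\tilde q$ makes sense. Next, by the very definition of $q_X$, the composite $C(\tilde X)\xrightarrow{\pi_i^*}C_b(\Omega_X)\xrightarrow{\tilde q}C(\pmb{X})$ is independent of $i$ and equals $q_X^*$; that is, $\tilde q\circ\pi_L=\tilde q\circ\pi_R=q_X^*$. Restricting along the inclusion $C^\alpha(X)\subseteq C(\tilde X)$, this says precisely that the left and right $C^\alpha(X)$-module structures induced on $C(\pmb{X})$ by $\pi_L$ and $\pi_R$ both coincide with multiplication via $q_X^*$.

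With these two observations in hand the proof is a one-line computation: applying the multiplicative map $\tilde q$ to the identity $\delta_\alpha(f_1f_2)=\delta_\alpha(f_1)\pi_R(f_2)+\pi_L(f_1)\delta_\alpha(f_2)$ gives
$$\tilde\delta_\alpha(f_1f_2)=\tilde q(\delta_\alpha(f_1))\,\tilde q(\pi_R(f_2))+\tilde q(\pi_L(f_1))\,\tilde q(\delta_\alpha(f_2))=\tilde\delta_\alpha(f_1)\,q_X^*(f_2)+q_X^*(f_1)\,\tilde\delta_\alpha(f_2),$$
which is exactly the Leibniz rule for the $q_X^*$-action of $C^\alpha(X)$ on $C(\pmb{X})$.

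The part that carries the actual content — and the step I would want to state carefully — is the identification $\tilde q\circ\pi_L=\tilde q\circ\pi_R$, i.e.\ that $\pi_1^*f-\pi_2^*f\in C_0(\Omega_X)$ for every $f\in C(\tilde X)$. This holds because for $f\in C(\tilde X)$ the function $(x,y)\mapsto f(x)-f(y)$ vanishes on $\Delta_X$, so for each $\epsilon>0$ the set $\{|f(x)-f(y)|\geq\epsilon\}$ is a closed subset of the compact space $\tilde X\times\tilde X$ disjoint from $\Delta_X$, hence a compact subset of $\Omega_X$; thus $\pi_1^*f-\pi_2^*f$ vanishes at infinity on $\Omega_X$. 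Granting this (it is implicit already in the well-definedness of $q_X$), no further obstacle remains.
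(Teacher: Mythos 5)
Your proposal is correct and is exactly the argument the paper has in mind: push the $(\pi_L,\pi_R)$-Leibniz identity through the quotient $*$-homomorphism and use that $\tilde q\circ\pi_L=\tilde q\circ\pi_R=q_X^*$ (which the paper builds into the definition of $q_X$, and which you rightly verify via $\pi_1^*f-\pi_2^*f\in C_0(\Omega_X)$). The paper's proof is a one-line appeal to the same facts; yours simply writes out the details.
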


\begin{proof}
It is clear that $\tilde{\delta}_\alpha$ is a derivation for the $q_X^*$-action from the fact that $\delta_\alpha$ is a $(\pi_L,\pi_R)$-derivation. 
\end{proof}

A metric space $(X,\rd)$ is said to separate points uniformly \cite[Definition 4.10]{weaverbook} if there is a constant $a>1$ such that for any $x,y\in X$ there is an $f\in \ker(\tilde{\delta}_1)$ with $\|\delta_1f\|_{C_b(\Omega_X)}\leq a$ and $|f(x)-f(y)|=\rd(x,y)$. 

\begin{prop}
\label{blknaolknad}
The small Hölder algebra $h^\alpha(X):=\ker(\tilde{\delta}_\alpha)$ satisfies 
\begin{itemize}
\item $h^\alpha(X)^{**}=C^\alpha(X)$ as soon as $(X,\rd^\alpha)$ separates points uniformly. 
\item If $0<\beta<\alpha<1$ then 
$$h^\alpha(X)=\overline{C^\beta(X)}^{C^\alpha(X)}.$$
\end{itemize} 
In particular, if $X$ is a compact Riemannian manifold with its geodesic distance $\rd$ and $\alpha\in (0,1)$, then $h^\alpha(X)^{**}=C^\alpha(X)$ and 
$$h^\alpha(X)=\overline{C^\infty(X)}^{C^\alpha(X)}.$$

\end{prop}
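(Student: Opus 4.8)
The plan is to establish the three assertions of Proposition \ref{blknaolknad} in sequence, treating the abstract metric statements first and then specializing to the Riemannian case. For the first bullet, the goal is to identify $h^\alpha(X)$ with a predual of $C^\alpha(X)$, so I would set $Y=(X,\rd^\alpha)$ so that $C^\alpha(X)=\mathrm{Lip}(Y)$ and $h^\alpha(X)=h(Y)$ in Weaver's notation, and then invoke the double-dual theorem for Lipschitz spaces: when $Y$ separates points uniformly, the little Lipschitz space $h(Y)$ has bidual $\mathrm{Lip}(Y)$. Concretely, using the description $C^\alpha(X)^*\cong M(\Omega_X)/W_\alpha$ from the preceding proposition, one checks that the closed subspace $h^\alpha(X)=\ker(\tilde\delta_\alpha)$ sits inside $C^\alpha(X)$ via an isometric embedding of $C^\alpha(X)$-duals; the uniform separation hypothesis is exactly what is needed to show that the natural map $h^\alpha(X)^{**}\to C^\alpha(X)$ is surjective (rather than merely isometric onto a subspace), and this is Weaver's argument which I would cite and adapt to the $\delta_\alpha$-picture.

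For the second bullet, fix $0<\beta<\alpha<1$ and argue by two inclusions. For $C^\beta(X)\subseteq h^\alpha(X)$: if $f\in C^\beta(X)$ then $\delta_\alpha(f)(x,y)=\delta_\beta(f)(x,y)\,\rd(x,y)^{\beta-\alpha}$, and since $\rd(x,y)^{\beta-\alpha}\to 0$ as $\rd(x,y)\to 0$ while $\delta_\beta(f)$ is bounded, the product $\delta_\alpha(f)$ vanishes on the part of $\Omega_X$ approaching the diagonal; more precisely $\delta_\alpha(f)\in C_0(\Omega_X)$ after accounting for the point $\infty$, which is precisely the statement that its image in $C_b(\Omega_X)/C_0(\Omega_X)$, and hence in $C(\pmb X)$, is zero, so $f\in\ker\tilde\delta_\alpha$. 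Since $h^\alpha(X)$ is closed in $C^\alpha(X)$ (it is the kernel of a bounded map), we get $\overline{C^\beta(X)}^{C^\alpha(X)}\subseteq h^\alpha(X)$. For the reverse inclusion I would show that any $f\in h^\alpha(X)$ is a $C^\alpha$-limit of elements with strictly better Hölder exponent; the standard device is mollification or, in the abstract setting, the characterization that $f\in h^\alpha(X)$ iff $\delta_\alpha(f)\in C_0(\Omega_X)$, which lets one truncate $f$ at scale $\epsilon$ to produce approximants whose $\delta_\alpha$ differs from $\delta_\alpha(f)$ only near the diagonal, where $\delta_\alpha(f)$ is already small — this is again essentially Weaver's density result \cite[Chapter 3]{weaverbook} transported to the $\delta_\alpha$-formulation, so I would cite it.

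Finally, the Riemannian specialization follows by checking the two hypotheses. A compact Riemannian manifold $(M,\rd_g)$, hence also $(M,\rd_g^\alpha)$, separates points uniformly: one builds, for nearby $x,y$, a function supported near the connecting geodesic that is linear in arclength and then smooths it, verifying $|f|_{C^\alpha}$ is bounded by a constant independent of $x,y$; for far-apart points the constant $\rd(x,y)\le\mathrm{diam}(M)$ makes this automatic. This gives $h^\alpha(X)^{**}=C^\alpha(X)$. For the last equality, one inclusion is that $C^\infty(M)\subseteq C^\beta(M)$ for any $\beta<1$, hence $\overline{C^\infty(M)}^{C^\alpha}\subseteq \overline{C^\beta(M)}^{C^\alpha}=h^\alpha(M)$; the reverse uses that smooth functions are $C^\alpha$-dense in $C^\beta(M)$ for $\beta<\alpha$ (mollification on $M$ converges in every $C^\beta$-norm with $\beta<1$), so $h^\alpha(M)=\overline{C^\beta(M)}^{C^\alpha}\subseteq\overline{C^\infty(M)}^{C^\alpha}$. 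I expect the main obstacle to be the reverse inclusion in the second bullet, i.e. proving $h^\alpha(X)\subseteq\overline{C^\beta(X)}^{C^\alpha(X)}$ in the abstract setting without extra structure on $X$; the cleanest route is to reduce it to the statement that for $f\in h^\alpha(X)$ the family of scaled functions approximates $f$ in $C^\alpha$, which in turn rests on Weaver's analysis of little Lipschitz spaces, so the real work is locating and citing the correct statement rather than reproving it.
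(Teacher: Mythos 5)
Your proposal takes essentially the same route as the paper: both abstract bullets are delegated to Weaver's results on little Lipschitz/H\"older spaces (Theorems 4.38 and 2.52/8.27 of \cite{weaverbook}), and the Riemannian case is handled by uniform separation of points plus a standard mollification argument, with you merely filling in the easy inclusion $C^\beta(X)\subseteq h^\alpha(X)$ by hand. One caution: every analytic fact you invoke --- that $\rd(x,y)^{\beta-\alpha}\to 0$ near the diagonal, and that mollification of a $C^\beta$-function converges in the $C^\alpha$-norm --- requires $\beta>\alpha$ rather than $\beta<\alpha$ as you (following the displayed hypothesis) write; since for $\beta<\alpha$ one has $C^\alpha(X)\subseteq C^\beta(X)$ and the closure assertion would be vacuous or false, the intended hypothesis is clearly $0<\alpha<\beta<1$ (consistent with the ``in particular'' clause about $C^\infty(X)$), so your argument proves the intended statement but the inequality should be corrected throughout.
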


\begin{proof}
The first statement can be found in \cite[Theorem 4.38]{weaverbook}. The second statement follows from \cite[Theorem 2.52 and 8.27]{weaverbook}. Finally, the last conclusion follows from the first two statements and a standard mollification argument.
\end{proof}

\begin{prop}
Consider the flip mapping $\sigma:\Omega_X\to \Omega_X$, $(x,y)\mapsto (y,x)$. The flip mapping induces a non-trivial involution $\tilde{\sigma}$ on $\pmb{X}$. Elements of $\delta_\alpha(C^\alpha(X))\subseteq C_b(\Omega_X)$ and $\tilde{\delta}_\alpha(C^\alpha(X))\subseteq C(\pmb{X})$ are odd under $\sigma^*$ and $\tilde{\sigma}^*$, respectively.

\end{prop}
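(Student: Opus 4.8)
The plan is to handle the three assertions --- that $\sigma$ induces an involution $\tilde{\sigma}$ of $\pmb{X}$, that it is non-trivial, and that $\delta_\alpha$ and $\tilde{\delta}_\alpha$ take values in the odd part --- in that order. For the first, note that $\sigma$ is a homeomorphism of $\Omega_X$ with $\sigma^2=\id$: it is continuous, equal to its own inverse, and preserves $\Omega_X$ since $(x,y)\in\Delta_X$ if and only if $(y,x)\in\Delta_X$. Consequently $\sigma^*$ is a $*$-automorphism of $C_b(\Omega_X)$ carrying the ideal $C_0(\Omega_X)$ onto itself (a homeomorphism is proper), so it descends to a $*$-automorphism of $C_b(\Omega_X)/C_0(\Omega_X)=C(\pmb{X})$ and dually to a self-homeomorphism $\tilde{\sigma}$ of $\pmb{X}$, with $\tilde{\sigma}^2=\id$. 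The oddness is then a one-line computation: since the extended metric on $\tilde{X}$ is symmetric ($\rd(x,y)=\rd(y,x)$ on $X$ and $\rd(x,\infty)=1=\rd(\infty,x)$), for $f\in C^\alpha(X)$ and $(x,y)\in\Omega_X$ one has
$$(\sigma^*\delta_\alpha(f))(x,y)=\delta_\alpha(f)(y,x)=\frac{f(y)-f(x)}{\rd(y,x)^\alpha}=-\delta_\alpha(f)(x,y),$$
hence $\sigma^*\delta_\alpha(f)=-\delta_\alpha(f)$; passing to the quotient yields $\tilde{\sigma}^*\tilde{\delta}_\alpha(f)=-\tilde{\delta}_\alpha(f)$.

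For non-triviality I would first dispose of the degenerate case: if $X$ is finite then $\Omega_X$ is compact, $C_0(\Omega_X)=C_b(\Omega_X)$ and $\pmb{X}=\emptyset$, and there is nothing to prove. Assume then $X$ infinite and fix a non-isolated point $x_0\in X$; there is a sequence $x_n\to x_0$ with $x_n\neq x_0$, i.e. $(x_n,x_0)\in\Omega_X$ converging to $(x_0,x_0)\in\Delta_X$ in $\tilde{X}\times\tilde{X}$. Test $\tilde{\sigma}$ against $f:=\rd(\,\cdot\,,x_0)^\alpha$, which lies in $C^\alpha(X)$: combining subadditivity of $t\mapsto t^\alpha$ on $[0,\infty)$ (which gives $|s^\alpha-t^\alpha|\le|s-t|^\alpha$ for $s,t\ge 0$) with the reverse triangle inequality gives $|f(x)-f(y)|\le\rd(x,y)^\alpha$. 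Since $\delta_\alpha(f)(x,x_0)=1$ for every $x\neq x_0$, while every element of $C_0(\Omega_X)$ tends to $0$ along any sequence in $\Omega_X$ converging to a point of $\Delta_X$, we conclude $\delta_\alpha(f)\notin C_0(\Omega_X)$, i.e. $\tilde{\delta}_\alpha(f)\neq 0$ in $C(\pmb{X})$. Combined with $\tilde{\sigma}^*\tilde{\delta}_\alpha(f)=-\tilde{\delta}_\alpha(f)$ this forces $\tilde{\sigma}^*\neq\id$, so $\tilde{\sigma}$ is not the identity on $\pmb{X}$. (Alternatively one could invoke Proposition \ref{blknaolknad} together with the non-separability of $C^\alpha(X)$ to produce some $f$ with $\tilde{\delta}_\alpha(f)\neq 0$.)

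I do not anticipate a serious obstacle. The point worth flagging is conceptual rather than technical: the flip fixes the diagonal $\Delta_X$ pointwise, so $\tilde{\sigma}$ acts trivially on the ``classical'' part of the boundary, and its non-triviality is visible only on the ``thickening'' $\pmb{X}$, which records the direction of approach to $\Delta_X$. For this reason the non-triviality genuinely requires exhibiting a bounded continuous function on $\Omega_X$ that is not in $C_0(\Omega_X)$ --- equivalently an $f\in C^\alpha(X)\setminus h^\alpha(X)$ --- rather than merely a point of $\Delta_X$; the explicit choice $\rd(\,\cdot\,,x_0)^\alpha$ makes this transparent.
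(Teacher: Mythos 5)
Your proof is correct and complete. The paper states this proposition without any proof at all, so there is no argument to compare against; yours is the natural one, and you supply the two points that actually need checking: that $\sigma^*$ preserves $C_0(\Omega_X)$ so that it descends to $C(\pmb{X})$, and that $\tilde{\delta}_\alpha$ is not identically zero, witnessed by $f=\rd(\,\cdot\,,x_0)^\alpha$ with $\delta_\alpha(f)(\cdot,x_0)\equiv 1$ along a sequence escaping every compact subset of $\Omega_X$. The one caveat, which you flag, is that for finite $X$ one has $\pmb{X}=\emptyset$ and ``non-trivial'' fails vacuously, so the proposition implicitly assumes $X$ infinite.
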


\subsection{The Hölder algebra on a manifold}

\begin{prop}
\label{thickeningprop}
Let $X$ be a Riemannian manifold equipped with its geodesic metric. Then there is a unique quotient mapping $q_S:\pmb{X}\to SX$ factoring $q_X:\pmb{X}\to X$ over the projection mapping $\pi_S:SX\to X$ and making the following diagram commutative
\begin{equation}
\label{comdiaone}
\begin{tikzcd}
C^1(X)\arrow[r,"\mathrm{d}"]\arrow[d]&C(SX)\arrow[d,"q_S^*"]   \\
\mathrm{Lip}(X)\arrow[r, "\tilde{\delta}_1"]  &C(\pmb{X}) ,
\end{tikzcd}
\end{equation}
where the left vertical arrow is the inclusion and the top horizontal mapping is defined from the exterior derivative $\rd:C^1(X)\to C(X,T^*X)$ when identifying sections in $C(X,T^*X)$ with functions on $SX$ via the pairing of tangent vectors with cotangent vectors. Moreover, 
$$C^1(X)=\{f\in C(X): \tilde{\delta}_1(f)\in q_S^*C(SX)\}.$$
\end{prop}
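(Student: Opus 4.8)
The inclusion $C^1(X)\subseteq\{f\in C(X):\tilde{\delta}_1(f)\in q_S^*C(SX)\}$ is immediate from the commutative square \eqref{comdiaone}: for $f\in C^1(X)$ (which lies in $\mathrm{Lip}(X)$, so that $\tilde{\delta}_1(f)$ is defined) one has $\tilde{\delta}_1(f)=q_S^*(\rd f)$ with $\rd f\in C(SX)$. The content is the reverse inclusion, which is what I will address. So fix $f$ with $\tilde{\delta}_1(f)=q_S^*(g)$, $g\in C(SX)$; since $\tilde{\delta}_1$ is only defined on $C^{0,1}(X)=\mathrm{Lip}(X)$, the hypothesis presupposes $\delta_1(f)\in C_b(\Omega_X)$, so $f$ is globally Lipschitz -- a fact that will be used essentially. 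The plan is: (i) replace the abstract equality in the corona algebra $C(\pmb{X})$ by a uniform asymptotic for $\delta_1(f)$ near $\Delta_X$; (ii) read off two-sided directional derivatives of $f$; (iii) conclude by the classical criterion that a function with continuous first partials is $C^1$.

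For step (i) I would first pin down $q_S^*$ concretely. For $(x,y)\in\Omega_X$ with $\rd(x,y)$ below the injectivity radius of $X$ (positive by compactness), set
\[
\Theta(x,y):=\bigl(y,\ \exp_y^{-1}(x)/|\exp_y^{-1}(x)|_g\bigr)\in SX ,
\]
a continuous $SX$-valued ``direction'' function. Integrating $\rd f'$ along the minimizing geodesic from $y$ to $x$ and using uniform continuity of $\rd f'$ on the compact $X$, the square \eqref{comdiaone} yields $q_S^*(\rd f')=[\rd f'\circ\Theta]$ in $C(\pmb{X})=C_b(\Omega_X)/C_0(\Omega_X)$ for every $f'\in C^1(X)$. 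Since $q_S^*$ is a $*$-homomorphism and the family $\{\rd f':f'\in C^1(X)\}$ separates the points of $SX$ and vanishes at no point -- hence generates a dense $*$-subalgebra of $C(SX)$ by Stone--Weierstrass -- the identity $q_S^*(h)=[h\circ\Theta]$ propagates to all $h\in C(SX)$. Therefore the hypothesis is equivalent to
\[
\delta_1(f)(x,y)-g(\Theta(x,y))\longrightarrow 0\qquad\text{as }\rd(x,y)\to 0 .
\]

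For step (ii), fix $p\in X$ and $v\in S_pX$ and feed $y=p$, $x=\exp_p(tv)$ into this asymptotic. Short geodesics minimize, so $\rd(x,y)=t$ and $\Theta(x,y)=(p,v)$, giving $t^{-1}\bigl(f(\exp_p(tv))-f(p)\bigr)\to g(p,v)$ as $t\downarrow 0$. Running the same computation with $-v$, and using the elementary fact (established earlier) that $\tilde{\delta}_1(f)=q_S^*(g)$ is odd under the flip $\tilde{\sigma}^*$ while $q_S\circ\tilde{\sigma}$ equals the fibrewise antipode of $SX$ composed with $q_S$ -- so $g(p,-v)=-g(p,v)$ -- one gets that the two-sided limit $D_vf(p):=\lim_{t\to 0}t^{-1}(f(\exp_p(tv))-f(p))$ exists and equals $g(p,v)$, for all $(p,v)\in SX$; in particular $(p,v)\mapsto D_vf(p)=g(p,v)$ is continuous. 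For step (iii), work in a chart $\phi:U\to\R^n$ with coordinate vector fields $\partial_i$. For $q\in U$ the curve $c(t)=\phi^{-1}(\phi(q)+te_i)$ and the geodesic $t\mapsto\exp_q(t\,\partial_i|_q)$ have the same $1$-jet at $q$, so $\rd\bigl(c(t),\exp_q(t\,\partial_i|_q)\bigr)=O(t^2)$; since $f$ is Lipschitz the two corresponding difference quotients differ by $O(t)$, so the partial derivative of $f$ along $\partial_i$ at $q$ exists and equals $|\partial_i(q)|_g\cdot g\bigl(q,\partial_i|_q/|\partial_i(q)|_g\bigr)$, which is continuous in $q$. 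Hence $f\in C^1(U)$ by the classical criterion, and covering $X$ by charts gives $f\in C^1(X)$. (As a byproduct $\rd f_p(v)=D_vf(p)=g(p,v)$, so $g$ is precisely the function on $SX$ represented by $\rd f$, consistently with \eqref{comdiaone}.)

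The main obstacle is step (i): turning the equality of classes in the corona algebra $C(\pmb{X})$ into the stated uniform asymptotic, which rests on the concrete model $q_S^*(h)=[h\circ\Theta]$ and thus on a workable description of $q_S$ inherited from the already-proved part of Proposition \ref{thickeningprop} (together with the Stone--Weierstrass density and the flip relation $q_S\circ\tilde{\sigma}=(\text{fibrewise antipode})\circ q_S$). A more analytic, but lighter, point is the passage in step (iii) from derivatives along geodesics -- all that the asymptotic directly supplies -- to the coordinate-line partial derivatives required by the classical criterion; this is exactly where the automatic Lipschitz regularity of $f$ enters, and the oddness of $g$ under the fibrewise antipode (inherited from the flip symmetry) is what promotes one-sided to two-sided derivatives, plain continuity of one-sided directional derivatives being insufficient, as $|{\cdot}|$-type functions show.
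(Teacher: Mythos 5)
Your treatment of the ``Moreover'' clause is correct and substantially more detailed than the paper's, which dismisses it in one sentence (``all partial derivatives of $f$ are continuous and $f$ is by definition $C^1$''). Steps (ii)--(iii) --- extracting geodesic directional derivatives from the uniform asymptotic, using oddness under the flip/fibrewise antipode to upgrade one-sided to two-sided limits, transferring to coordinate lines via the $O(t^2)$ comparison of the coordinate curve with the geodesic together with the Lipschitz bound on $f$, and invoking the classical criterion --- supply exactly the details the paper omits, and your observation that it is the oddness of $g$, not mere continuity of one-sided directional derivatives, that does the work is well taken.

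The gap is structural: the proposition also asserts the \emph{existence} and \emph{uniqueness} of $q_S$ and the commutativity of \eqref{comdiaone}, and that is where the paper spends essentially all of its proof (a tubular neighbourhood $\nu:N\cong N^*X$ of $\Delta_X$, the radial projection $R:N^*X\setminus X\to SX$, a cutoff $\chi$, the verification that the resulting $\tau:C(SX)\to C_b(\Omega_X)$ is multiplicative modulo $C_0(\Omega_X)$, and the fact that $\rd\, C^1(X)$ generates $C(SX)$ for uniqueness). As written, your step (i) is circular: you invoke the commutative square in order to derive the concrete model $q_S^*(h)=[h\circ\Theta]$, and everything downstream rests on that model. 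The repair is immediate and already contained in your text: \emph{define} $q_S^*(h):=[\chi(\rd(x,y))\,h(\Theta(x,y))]$ (your $\Theta$ is precisely the paper's $R\circ\nu$ in the exponential tubular neighbourhood), check that this is a unital $*$-homomorphism modulo $C_0(\Omega_X)$ and that $\pi_S\circ q_S=q_X$, verify commutativity of the square by your geodesic-integration argument (using uniform continuity of $\rd f'$ on the compact $X$), and deduce uniqueness from your Stone--Weierstrass remark that $\rd\, C^1(X)$ generates $C(SX)$. With the logical order inverted in this way, your argument becomes a complete proof following the same route as the paper's, with a more careful treatment of the final identity.
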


As a consequence of Proposition \ref{thickeningprop}, we note that for $v\in SX$, and $\omega\in q_S^{-1}(v)\subseteq \pmb{X}$, the functional $\mathrm{Lip}(X)\ni f\mapsto [\tilde{\delta}_1(f)](\omega)$ extends the functional $C^1(X)\ni f\mapsto v.\mathrm{d}f(x)$, where $x=\pi_S(v)$, and we are using the inclusion $SX\to TX$ and the pairing of $TX$ with $T^*X$.

\begin{proof}
Since $\mathrm{d}C^1(X)\subseteq C(SX)$ generates $C(SX)$ as a $C^*$-algebra, the diagram \eqref{comdiaone} determines $q_S$ from $\mathrm{d}C^1(X)$ whenever $q_S$ factors over $q_X$. Thus it suffices to construct $q_S$. Pick a tubular neighborhood $N\subseteq \tilde{X}\times \tilde{X}$ of $\Delta_X$ and a diffeomorphism $\nu:N\cong N^*X$ with a normal bundle $N^*X\to X$. Let $R:N^*X\setminus X\to SX$ denote the projection defined from the quotient of the $\R_+$-action. After picking a $\chi\in C_c(N^*X,[0,1])$ with $\chi=1$ near the zero section $X\subseteq N^*X$, we can define a linear mapping $\tau:C(SX)\to C_b(\Omega_X)$ by $C^1(SX)\xrightarrow{R^*} C^1(N^*X\setminus X)\cap C_b(N^*X\setminus X)C_0(N^*X)$ with the inclusion $C_b(N^*X\setminus X)C_0(N^*X)\xrightarrow{\nu^*} C_b(\Omega_X)$. The mapping $\tau$ is readily verified to be multiplicative modulo $C_0(\Omega_X)$ and as such $q_S$ is well defined as the dual mapping to the composition $C(SX)\xrightarrow{\tau}C_b(\Omega_X)\to C(\pmb{X})$. 

The last identity in the proposition is easily proved from that whenever $\tilde{\delta}_1(f)\in q_S^*C(SX)$ then all partial derivatives of $f$ are continuous and $f$ is by definition $C^1$. 
\end{proof}

We consider the special case of the Riemannian manifold $X=S^1$. We identify $S^1$ with the unit circle in $\C$ parametrized by a complex parameter $z$, $|z|=1$, or sometimes denoted by $w$. We can refine the derivation $\delta_1$ by considering 
$$\delta_{S^1}(f)(z,w):=\frac{f(z)-f(w)}{z-w}.$$ 
In a similar way, one defines $\tilde{\delta}_{S^1}:\mathrm{Lip}(S^1)\to C(\pmb{S}^1)$. Then $f\in C^1(S^1)$ if and only if $\tilde{\delta}_{S^1}(f)\in q_{S^1}^*C(S^1)$ and $\delta_{S^1}(f)=q_{S^1}^*(f')$. In this particular instance, the non-separable algebra $L^\infty(S^1)$ provides a more natural object to us when studying $\mathrm{Lip}(S^1)$ rather than $C(\pmb{S}^1)$. 

\begin{prop}
\label{infeindaodn}
Let $X$ be a compact Riemannian manifold and $x_0\in X$ a point. Define $C^\alpha_0(X\setminus \{x_0\}):=C^\alpha(X)\cap C_0(X\setminus \{x_0\})$ and define $\rd_{x_0}^\alpha\in  C^\alpha_0(X\setminus \{x_0\})$ by $\rd_{x_0}^\alpha(x):=\rd(x_0,x)^\alpha$. Then both of the spaces 
\begin{itemize}
\item $C^\alpha(X)/\overline{\rd_{x_0}^\alpha C^\alpha_0(X\setminus \{x_0\})}$
\item $C^\alpha_0(X\setminus \{x_0\})/\overline{(C^\alpha_0(X\setminus \{x_0\}))^2}$
\end{itemize}
are infinite-dimensional.
\end{prop}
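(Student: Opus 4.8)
The plan is to deduce both assertions from a single rescaling (blow‑up) argument at $x_0$. Fix normal coordinates $\exp_{x_0}\colon \overline{B}_r\to X$ centred at $x_0$, with $\overline{B}_r=\{v\in\R^n:|v|\le r\}$, $n=\dim X$, and $r$ below the injectivity radius, so that $\rd(x_0,\exp_{x_0}v)=|v|$ and the geodesic and Euclidean metrics are comparable on $\overline{B}_r$. For $\lambda\in(0,1]$ define $\Phi_\lambda\colon C^\alpha_0(X\setminus\{x_0\})\to C^\alpha(\overline{B}_r)$ by $(\Phi_\lambda f)(v):=\lambda^{-\alpha}f(\exp_{x_0}(\lambda v))$. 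First I would verify two properties: (i) $\sup_\lambda\|\Phi_\lambda\|<\infty$ — the $\alpha$‑seminorm bound is immediate from the Hölder condition and metric comparability, and the sup‑norm bound follows from $f(x_0)=0$ together with $\rd(x_0,\exp_{x_0}(\lambda v))=\lambda|v|\le\lambda r$; and (ii) the almost‑multiplicativity
$$\Phi_\lambda(fg)=\lambda^\alpha\,(\Phi_\lambda f)(\Phi_\lambda g)\qquad\bigl(f,g\in C^\alpha_0(X\setminus\{x_0\})\bigr),$$
which is just the identity $\lambda^{-\alpha}(fg)(\exp_{x_0}\lambda v)=\lambda^\alpha\bigl(\lambda^{-\alpha}f(\exp_{x_0}\lambda v)\bigr)\bigl(\lambda^{-\alpha}g(\exp_{x_0}\lambda v)\bigr)$. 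Combining (i) and (ii): for a finite sum of products $p=\sum_j f_jg_j$ one gets $\|\Phi_\lambda p\|_{C^\alpha(\overline{B}_r)}\le \lambda^\alpha\bigl(\sup_\mu\|\Phi_\mu\|\bigr)^2\sum_j\|f_j\|\,\|g_j\|\to 0$ as $\lambda\to0$; since $D:=\{h:\|\Phi_\lambda h\|_{C^\alpha(\overline{B}_r)}\to 0\ (\lambda\to0)\}$ is a norm‑closed subspace of $C^\alpha_0(X\setminus\{x_0\})$, it follows that $\overline{(C^\alpha_0(X\setminus\{x_0\}))^2}\subseteq D$. Because $\rd_{x_0}^\alpha\in C^\alpha_0(X\setminus\{x_0\})$, one also has $\rd_{x_0}^\alpha\,C^\alpha_0(X\setminus\{x_0\})\subseteq(C^\alpha_0(X\setminus\{x_0\}))^2$, hence $\overline{\rd_{x_0}^\alpha\,C^\alpha_0(X\setminus\{x_0\})}\subseteq D$ as well. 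So it suffices to produce infinitely many $h\in C^\alpha_0(X\setminus\{x_0\})$ that are \emph{not} in $D$ and whose ``$\Phi$‑limits'' are linearly independent.

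For this I would use logarithmically oscillating radial profiles. Fix $\chi\in C^\infty_c(B_r)$ with $\chi\equiv1$ on $B_{r/2}$, and for $k\ge1$ put, in the normal chart,
$$h_k(\exp_{x_0}v):=\chi(v)\,|v|^\alpha\cos\!\bigl(k\log|v|\bigr)=\chi(v)\,\mathrm{Re}\bigl(|v|^{\alpha+ik}\bigr),\qquad h_k(x_0):=0,$$
extended by $0$ outside $\exp_{x_0}(B_r)$. Since $\rho\mapsto\rho^{\alpha+ik}$ is $\alpha$‑Hölder on $[0,r]$ (its derivative has modulus $\sqrt{\alpha^2+k^2}\,\rho^{\alpha-1}$, and one uses subadditivity of $t\mapsto t^\alpha$) and $v\mapsto|v|$ is Lipschitz, each $h_k$ lies in $C^\alpha_0(X\setminus\{x_0\})$. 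Now set $\lambda_m:=e^{-2\pi m}$; then $\lambda_m^{ik}=e^{-2\pi i mk}=1$, and for every $m\ge1$ we have $\chi(\lambda_m v)=1$ on $\overline{B}_r$ (since $\lambda_m r<r/2$), so a direct computation gives $(\Phi_{\lambda_m}h_k)(v)=\mathrm{Re}\bigl(\lambda_m^{ik}|v|^{\alpha+ik}\bigr)=|v|^\alpha\cos(k\log|v|)$ on $\overline{B}_r$, \emph{independently of $m$}. Consequently, for a finite combination $h=\sum_k c_k h_k$ one has $\Phi_{\lambda_m}h=\sum_k c_k\,|v|^\alpha\cos(k\log|v|)$ for all $m\ge1$. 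If $h\in D$ — in particular if $h\in\overline{(C^\alpha_0(X\setminus\{x_0\}))^2}$, or if $h\in\overline{\rd_{x_0}^\alpha C^\alpha_0(X\setminus\{x_0\})}$ — then letting $m\to\infty$ forces $\sum_k c_k\,|v|^\alpha\cos(k\log|v|)=0$ on $\overline{B}_r$; restricting to a ray and substituting $s=\log|v|$, this says $\sum_k c_k\cos(ks)\equiv0$, whence $c_k=0$ for all $k$. Thus the classes $[h_k]$, $k\ge1$, are linearly independent in each of the two quotients, which are therefore infinite‑dimensional.

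I expect the main obstacle to be precisely the \emph{closure} appearing in both quotients. The obvious candidate family $\{\rd_{x_0}^s\}_{\alpha\le s<2\alpha}$ is linearly independent modulo the algebraic span of products (each $\rd_{x_0}^s$ vanishes to order exactly $s<2\alpha$ along a geodesic, while an element of $(C^\alpha_0(X\setminus\{x_0\}))^2$ vanishes to order $2\alpha$), but for $s>\alpha$ it carries no information visible to the maps $\Phi_\lambda$, so it cannot probe the quotient by the \emph{closed} span. The point of $\Phi_\lambda$ is exactly to absorb products into a vanishing factor $\lambda^\alpha$ while retaining the ``degree‑$\alpha$ homogeneous part at $x_0$''; the oscillating profiles $h_k$ then supply infinitely many linearly independent such homogeneous parts — and this works even when $\dim X=1$, where the more naive device of angular profiles $|v|^\alpha\varphi(v/|v|)$, $\varphi\in C^\alpha(S^{n-1})$, would only give dimension two. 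Verifying (i), the Hölder regularity of $h_k$, and the explicit form of $\Phi_{\lambda_m}h_k$ is routine; the single point deserving care is the closed‑subspace step establishing $\overline{(C^\alpha_0(X\setminus\{x_0\}))^2}\subseteq D$.
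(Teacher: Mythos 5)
Your proof is correct, and it takes a genuinely different route from the paper's. The paper fixes a Lipschitz ``blow-down'' diffeomorphism $X\setminus\overline{B_\epsilon(x_0)}\cong X\setminus\{x_0\}$, sets $V_{x_0}=\rd_{x_0}^\alpha V_0$ with $V_0$ the pullback of $C^\infty(X\setminus B_\epsilon(x_0))$ (so near $x_0$ its elements look like $\rd(x_0,\cdot)^\alpha$ times an angular profile), and asserts an injection $C^\infty(S^{n-1})/\C 1\hookrightarrow V_{x_0}/(V_{x_0}\cap\overline{(C^\alpha_0(X\setminus\{x_0\}))^2})$ for $n>1$, leaving $n=1$ as ``similar''; the mechanism by which the \emph{closure} of the span of products is excluded is not spelled out there. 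You replace the angular profiles by radial logarithmic oscillations $|v|^\alpha\cos(k\log|v|)$ and, crucially, supply the missing functional-analytic device: the uniformly bounded rescalings $\Phi_\lambda$ are almost multiplicative with the vanishing factor $\lambda^\alpha$, so the closed linear span of products (and a fortiori $\overline{\rd_{x_0}^\alpha C^\alpha_0(X\setminus\{x_0\})}$, since $\rd_{x_0}^\alpha\in C^\alpha_0(X\setminus\{x_0\})$) lands in the closed subspace $D$, while each nontrivial combination $\sum_k c_k h_k$ has the $m$-independent image $\sum_k c_k|v|^\alpha\cos(k\log|v|)\neq0$ along $\lambda_m=\e^{-2\pi m}$ and hence escapes $D$. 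This buys a dimension-independent argument (including $n=1$, where the angular construction degenerates to dimension two) and an explicit treatment of both closures; what it gives up is the larger ``angular'' copy of $C^\infty(S^{n-1})/\C 1$ that the paper's construction exhibits in the quotient when $n>1$. The steps you flag as routine are indeed routine: uniform boundedness of $\Phi_\lambda$ uses only $f(x_0)=0$ and bi-Lipschitz comparability of $\exp_{x_0}$ on a small ball, the regularity of $h_k$ follows from $|\partial_\rho(\rho^{\alpha+ik})|\lesssim \rho^{\alpha-1}$ together with subadditivity of $t\mapsto t^\alpha$, and the final linear independence is the vanishing of a trigonometric polynomial on a half-line.
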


This proposition shows that any infinitesimal approach to Hölder geometries require infinite dimensional model spaces.

\begin{proof}
Pick a small enough $\epsilon>0$ so that $B_{2\epsilon}(x_0)$ is diffeomorphic to a euclidean ball $B_2(0)\subseteq \R^n$. We pick a diffeomorphism $B_2(0)\setminus \overline{B_1(0)}\cong B_2(0)\setminus \{0\}$ that extends to a uniformly Lipschitz mapping on $B_2(0)$. We extend this diffeomorphism to a diffeomorphism $X\setminus \overline{B_\epsilon(x_0)}\cong X\setminus \{x_0\}$ that extends by continuity to a Lipschitz mapping on $X$. We denote the image of $C^\infty(X\setminus B_\epsilon(x_0))$ under pullback along this diffeomorphism by $V_0\subseteq C_b(X\setminus \{x_0\})$. Define $V_{x_0}:=\rd^\alpha_{x_0}V_0\subseteq C^\alpha_0(X\setminus \{x_0\})$. The proposition follows if we can show that $V_{x_0}/(V_{x_0}\cap \overline{(C^\alpha_0(X\setminus \{x_0\}))^2})$ is infinite-dimensional. We restrict our attention to the case $n>1$ where the existence of an injection $C^\infty(S^{n-1})/\C1\to V_{x_0}/(V_{x_0}\cap \overline{(C^\alpha_0(X\setminus \{x_0\}))^2})$ proves the result. The case $n=1$ goes similarly.
\end{proof}

\begin{remark}
Proposition \ref{infeindaodn} provides a reason for why one can expect the cyclic cohomology of Hölder spaces to be complicated: the ordinary methods of localization applying in presence of higher regularity (cf. \cite[Chapter 3]{teleloca}) are not applicable. 
\end{remark}

\subsection{Operator algebra structure}

The Hölder algebra admits additional structure. We equip it with the structure of an operator $*$-algebra. Recall that an operator algebra is a Banach algebra which is matrix normed and a completely bounded multiplication. Equivalently, an operator algebra is isomorphic to a closed subalgebra of a $C^*$-algebra. For more details on operator algebras, see \cite{blechermerdy}. The notion of an operator $*$-algebra was developed in \cite{blekames}. The following two propositions are clear from construction.

\begin{prop}
\label{adknaodkna}
Let $X$ be a compact metric space. The mapping 
$$i_\alpha:C^\alpha(X)\to C_b(\Omega_X,M_2(\C)), \quad a\mapsto \begin{pmatrix} \pi_L(a)&0\\ \delta_\alpha(a)& \pi_R(a)\end{pmatrix},$$
realizes $C^\alpha(X)$ as a closed subalgebra of a $C^*$-algebra and induces an operator $*$-algebra structure on $C^\alpha(X)$.
\end{prop}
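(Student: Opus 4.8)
The plan is to verify directly that the stated map $i_\alpha$ is an injective algebra homomorphism onto a closed subalgebra of the $C^*$-algebra $C_b(\Omega_X, M_2(\C))$, and then invoke the general principle that any such subalgebra inherits an operator algebra structure by restricting the matrix norms; the $*$-structure requires slightly more care since $i_\alpha$ is not $*$-preserving for the obvious involution. First I would observe that $i_\alpha$ is multiplicative: writing $i_\alpha(a)$ as the lower-triangular $2\times 2$ matrix with diagonal entries $\pi_L(a), \pi_R(a)$ and lower-left entry $\delta_\alpha(a)$, the product $i_\alpha(a_1)i_\alpha(a_2)$ has diagonal entries $\pi_L(a_1)\pi_L(a_2) = \pi_L(a_1a_2)$ and $\pi_R(a_1)\pi_R(a_2) = \pi_R(a_1a_2)$, since $\pi_L$ and $\pi_R$ are $*$-homomorphisms, while the lower-left entry is $\delta_\alpha(a_1)\pi_R(a_2) + \pi_L(a_1)\delta_\alpha(a_2)$, which equals $\delta_\alpha(a_1a_2)$ precisely by the derivation property established in the earlier proposition. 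Injectivity is immediate from either diagonal entry being $\pi_L$ or $\pi_R$, which are $*$-monomorphisms.

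Next I would address closedness of the range. The key input is the earlier proposition stating that $\|\delta_\alpha(f)\|_{C_b(\Omega_X)} = \max(|f|_{C^\alpha(X)}, \|f\|_{C(X)})$, so that $\delta_\alpha$ has closed range, combined with the fact that $\|\pi_L(a)\|_{C_b(\Omega_X)} = \|\pi_R(a)\|_{C_b(\Omega_X)} = \|a\|_{C(X)}$. From these, the norm of $i_\alpha(a)$ in $C_b(\Omega_X, M_2(\C))$ is comparable (up to a universal constant depending only on the equivalence of norms on $M_2$) to $\|a\|_{C(X)} + |f|_{C^\alpha(X)} = \|a\|_{C^\alpha(X)}$, so $i_\alpha$ is a bounded-below homomorphism from a Banach algebra; its range is therefore complete, hence closed. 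This realizes $C^\alpha(X)$ as a closed subalgebra of a $C^*$-algebra, which by Blecher–Ruan–Sinclair (see \cite{blechermerdy}) endows it with an operator algebra structure via the induced matrix norms on $M_n(C^\alpha(X)) \hookrightarrow M_n(C_b(\Omega_X, M_2(\C))) = C_b(\Omega_X, M_{2n}(\C))$.

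For the operator $*$-algebra structure in the sense of \cite{blekames}, I would note that the relevant involution on $C^\alpha(X)$ is pointwise complex conjugation $a \mapsto \bar{a}$, which is an isometric anti-automorphism (the Hölder and sup norms are conjugation-invariant), and that one must exhibit it as completely bounded (indeed completely isometric) with respect to the operator algebra structure just constructed. This is seen by conjugating $i_\alpha$ with the flip $\sigma^*$ on $\Omega_X$ together with the transpose-type involution on $M_2(\C)$: since $\delta_\alpha(\bar a) = \overline{\sigma^* \delta_\alpha(a)}$ (the earlier proposition records that elements of $\delta_\alpha(C^\alpha(X))$ are odd under $\sigma^*$, and conjugation interacts with the difference quotient in the obvious way, picking up a sign and a swap of the two coordinates that is absorbed into $\sigma^*$), while $\pi_L(\bar a) = \overline{\sigma^*\pi_R(a)}$ and $\pi_R(\bar a) = \overline{\sigma^* \pi_L(a)}$, the map $a \mapsto i_\alpha(\bar a)$ agrees with a $*$-automorphism of $C_b(\Omega_X, M_2(\C))$ (composition of $\sigma^*$, entrywise conjugation, and matrix transposition conjugated by $\left(\begin{smallmatrix}0&1\\1&0\end{smallmatrix}\right)$) applied to $i_\alpha(a)$; this automorphism is completely isometric, so the involution is completely isometric as required.

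The main obstacle I anticipate is the bookkeeping in the last paragraph: getting the interplay of the flip $\sigma^*$, complex conjugation, and the choice of corner into which one places $\delta_\alpha$ exactly right so that $a \mapsto i_\alpha(\bar a)$ lands in the image of $i_\alpha$ under a genuine $C^*$-algebra $*$-isomorphism. The algebra-homomorphism and closed-range parts are essentially immediate given the two cited propositions; it is only the $*$-structure that demands this careful identification, and one should double check that the conjugating unitary $\left(\begin{smallmatrix}0&1\\1&0\end{smallmatrix}\right)$ combined with transposition indeed exchanges the upper- and lower-triangular forms in a way compatible with swapping $\pi_L$ and $\pi_R$.
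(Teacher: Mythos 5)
Your proposal is correct in substance, and there is nothing in the paper to compare it against: the authors give no argument, declaring the proposition ``clear from construction,'' so your direct verification (multiplicativity via the Leibniz rule, injectivity via $\pi_L$, closedness of the range via the isometry $\|\delta_\alpha(f)\|_{C_b(\Omega_X)}=\max(|f|_{C^\alpha},\|f\|_{C(X)})$, and the involution implemented by $\sigma^*$ together with a unitary conjugation) is exactly the fleshing-out that ``clear from construction'' presupposes. Two pieces of bookkeeping should be fixed in the writeup, both of the kind you yourself flag. First, the lower-left entry of $i_\alpha(a_1)i_\alpha(a_2)$ is $\delta_\alpha(a_1)\pi_L(a_2)+\pi_R(a_1)\delta_\alpha(a_2)$, not $\delta_\alpha(a_1)\pi_R(a_2)+\pi_L(a_1)\delta_\alpha(a_2)$; this still equals $\delta_\alpha(a_1a_2)$, but only because $C_b(\Omega_X)$ and $C^\alpha(X)$ are commutative (equivalently, because the ``other'' Leibniz decomposition $\delta_\alpha(f_1f_2)=\delta_\alpha(f_1)\pi_L(f_2)+\pi_R(f_1)\delta_\alpha(f_2)$ also holds), so you should either cite commutativity or place $\delta_\alpha$ in the upper-right corner. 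Second, $\overline{\sigma^*\delta_\alpha(a)}=-\delta_\alpha(\bar a)$ because $\delta_\alpha(a)$ is odd under $\sigma^*$, so the correct identity is $i_\alpha(\bar a)=w\,(\sigma^*\otimes\mathrm{id}_{M_2})\bigl(i_\alpha(a)^*\bigr)\,w^*$ with $w=\left(\begin{smallmatrix}0&1\\-1&0\end{smallmatrix}\right)$ rather than $\left(\begin{smallmatrix}0&1\\1&0\end{smallmatrix}\right)$; equivalently, one composes your flip with conjugation by $\mathrm{diag}(1,-1)$ to absorb the sign. With that correction the involution is exhibited as the restriction of a composition of the $C^*$-adjoint, a $*$-automorphism, and a unitary conjugation, hence completely isometric, which is what the operator $*$-algebra structure in the sense of \cite{blekames} requires.
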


\begin{prop}
Let $M$ be a compact Riemannian manifold and fix a first order differential operator $D$ acting between two hermitean vector bundles $E,F\to M$ such that $\sigma_2(D^*D)(x,\xi)=|\xi|^2\id_E$ coincides with the metric on $M$. The mapping 
$$i_D:\mathrm{Lip}(M)\to L^\infty(M, \End(E\oplus F)), \quad a\mapsto \begin{pmatrix} a&0\\ [D,a]& a\end{pmatrix}.$$
realizes $\mathrm{Lip}(M)$ as a weak *-closed subalgebra of $L^\infty(M, \End(E\oplus F))$.
\end{prop}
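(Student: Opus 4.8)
The approach is to identify the image $i_D(\mathrm{Lip}(M))$ inside the $W^*$-algebra $\mathcal N:=L^\infty(M,\End(E\oplus F))$ (with predual $L^1(M,\End(E\oplus F))$ under the fibrewise-trace-and-integrate pairing $(T,S)\mapsto\int_M\tra(T(x)S(x))\,\rd\mathrm{vol}_g$) as an explicit intersection of weak-$*$ closed linear subspaces. First one records the easy facts that $i_D$ is well defined and an injective algebra homomorphism: for $a\in\mathrm{Lip}(M)=C^{0,1}(M)$ Rademacher's theorem gives $\rd a\in L^\infty(M,T^*M)$, so $[D,a]:=\sigma_1(D)(\,\cdot\,,\rd a)\in L^\infty(M,\Hom(E,F))$ is well defined and $i_D(a)\in\mathcal N$; injectivity is clear since $a$ is the $(1,1)$-block; and the almost-everywhere Leibniz rule $\rd(ab)=(\rd a)b+a\,\rd b$ for Lipschitz functions gives $[D,ab]=[D,a]b+a[D,b]$, whence $i_D(a)i_D(b)=i_D(ab)$ by the $2\times2$ block product. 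This is the sense in which the statement is \emph{clear from construction}; the genuine content is the weak-$*$ closedness of the range.

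For that, I claim $i_D(\mathrm{Lip}(M))$ coincides with the set $\mathcal A\subseteq\mathcal N$ of those $T$ which, written in $E\oplus F$ block form, satisfy $T_{12}=0$, have $T_{11}$ and $T_{22}$ equal to multiplication by one and the same scalar $a\in L^\infty(M)$, and satisfy $D(as)-a\,Ds=T_{21}\,s$ as an identity of distributions for every $s\in C^\infty(M,E)$. The first two requirements manifestly define a weak-$*$ closed subspace. For a fixed smooth $s$ and a fixed test section $\phi\in C^\infty(M,F)$, moving $D$ onto $\phi$ turns the equation $\langle D(as)-a\,Ds-T_{21}s,\phi\rangle_{L^2}=0$ into $\langle T,\mu_{s,\phi}\rangle=0$ for a fixed \emph{continuous} (hence $L^1$) section $\mu_{s,\phi}$ of $\End(E\oplus F)$ assembled out of $s$, $Ds$, $\phi$ and $D^*\phi$; this is a weak-$*$ closed hyperplane. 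Intersecting over all $s,\phi$ exhibits $\mathcal A$ as weak-$*$ closed.

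It then remains to prove $i_D(\mathrm{Lip}(M))=\mathcal A$. The inclusion $\subseteq$ is precisely the Leibniz identity $D(as)=\bigl([D,a]s\bigr)+a\,Ds$, which holds for $a\in\mathrm{Lip}(M)=W^{1,\infty}(M)$ and smooth $s$ because $\rd(as)=(\rd a)s+a\,\rd s$ holds distributionally. Conversely, given $T\in\mathcal A$, write $T=\left(\begin{smallmatrix}a&0\\ b&a\end{smallmatrix}\right)$ with $a\in L^\infty(M)$ and $b\in L^\infty(M,\Hom(E,F))$; in a local trivialisation $D=\sum_jA_j\partial_j+A_0$ with smooth $A_j$, and the distributional product rule rewrites $D(as)-a\,Ds=bs$ as $\bigl(\sum_jA_j\,\partial_j a\bigr)s=bs$ for all $s$, so that $\sum_jA_j\,\partial_j a=b\in L^\infty$. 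The hypothesis $\sigma_2(D^*D)(x,\xi)=|\xi|^2\id_E$ gives $\|\sigma_1(D)(x,\xi)v\|=|\xi|\,\|v\|$, so the linear map $\xi\mapsto\sum_jA_j(x)\xi_j$ (a nonzero scalar multiple of $\sigma_1(D)(x,\cdot)$) is injective for each $x$ and admits a left inverse $L(x)$ varying smoothly over the compact $M$; hence $\rd a=L(b)\in L^\infty(M,T^*M)$, i.e.\ $a\in W^{1,\infty}(M)$. Since $M$ is a compact Riemannian manifold, $W^{1,\infty}(M)=\mathrm{Lip}(M)$ (a function with essentially bounded gradient is Lipschitz with constant $\|\rd a\|_\infty$, by integration along minimising geodesics), so $a\in\mathrm{Lip}(M)$ and $b=[D,a]$; thus $T=i_D(a)$.

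The only substantial step I anticipate is this last one: recovering Lipschitz continuity of $a$ from mere boundedness of the distributional commutator $[D,a]$, where the hypothesis $\sigma_2(D^*D)(x,\xi)=|\xi|^2\id_E$ enters exactly through injectivity of the principal symbol, and where one invokes the standard identification $W^{1,\infty}(M)=\mathrm{Lip}(M)$. A more economical but less explicit route to the weak-$*$ closedness is the Krein--Smulian theorem: $i_D$ is bounded below, so bounded subsets of the image pull back to bounded subsets of $\mathrm{Lip}(M)$; both $L^1(M,\End(E\oplus F))$ and the Arens--Eells predual of $\mathrm{Lip}(M)$ are separable, so the relevant weak-$*$ topologies are metrisable on bounded sets; and $i_D$ sends bounded weak-$*$ convergent sequences to weak-$*$ convergent sequences (a bounded weak-$*$ convergent sequence in $\mathrm{Lip}(M)$ converges pointwise and has weak-$*$ convergent differentials in $L^\infty$), so $i_D(\mathrm{Lip}(M))$ meets every ball of $\mathcal N$ in a weak-$*$ closed set.
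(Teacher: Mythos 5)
Your argument is correct, but there is nothing in the paper to compare it with: the authors dispose of this proposition (together with the preceding one) with the single sentence that it is ``clear from construction'', so you have supplied a proof where the paper offers none. Your route --- identifying $i_D(\mathrm{Lip}(M))$ with the set of $T=\left(\begin{smallmatrix}a&0\\ b&a\end{smallmatrix}\right)$, $a$ scalar, satisfying the distributional Leibniz identity $D(as)-a\,Ds=bs$ for all smooth $s$, and exhibiting that set as an intersection of kernels of weak-$*$ continuous functionals $T\mapsto\langle T,\mu_{s,\phi}\rangle$ with $\mu_{s,\phi}\in L^1$ --- is clean and complete, and you correctly isolate the one genuinely nontrivial step: recovering $a\in\mathrm{Lip}(M)$ from $a\in L^\infty$ with $\sum_jA_j\partial_ja\in L^\infty$. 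That step is exactly where the hypothesis $\sigma_2(D^*D)(x,\xi)=|\xi|^2\,\id_E$ enters, via fibrewise injectivity of $\xi\mapsto\sigma_1(D)(x,\xi)$ and a smooth left inverse (e.g.\ $(\Sigma^*\Sigma)^{-1}\Sigma^*$, which is smooth because the rank is constant), followed by the standard identification $W^{1,\infty}(M)=\mathrm{Lip}(M)$ on a compact manifold. The same isometry hypothesis gives $\|[D,a]\|_{L^\infty}=\|\rd a\|_{L^\infty}$, so $i_D$ is bounded below and the image carries an equivalent norm, which is what ``realizes $\mathrm{Lip}(M)$ as'' should mean. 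Your closing Krein--Smulian sketch is also viable (boundedness below plus Arzel\`a--Ascoli on bounded Lipschitz balls and metrizability of the weak-$*$ topology on balls of $L^\infty$ over the separable predual $L^1$), but the first argument is the one worth keeping, since it avoids passing to subsequences and makes the role of the symbol hypothesis explicit.
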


\section{Cyclic (co)homology}

We shall be interested in the cyclic cohomology of Hölder algebras. In this section we recall the construction of cyclic (co)homology of a unital Banach algebra. We write $\otimes^{\rm alg}$ for the algebraic tensor product and $\tilde{\otimes}$ for the projective tensor product. The constructions' dependence on the Banach structure is through the tensor product, and applies to any choice of tensor product making products and flip maps continuous. 

For a Banach algebra $\mathfrak{A}$ we write 
$$C_k(\mathfrak{A}):=\mathfrak{A}^{\tilde{\otimes} k+1} \quad\mbox{and}\quad C^k(\mathfrak{A}):=C_k(\mathfrak{A})^*.$$
In particular, $C^k(\mathfrak{A})$ consists of $k+1$-linear functionals $c$ on $\mathfrak{A}$ such that $|c(a_0,\ldots, a_k)\|\lesssim \|a_0\|\cdots \|a_k\|$. We consider the cyclicity operator 
$$\Lambda:C_k(\mathfrak{A})\to C_k(\mathfrak{A}),\quad \Lambda(a_0\otimes a_1\otimes \cdots\otimes a_k):=(-1)^{k}a_1\otimes \cdots\otimes a_k\otimes a_0,$$
and the Hochschild boundary operator 
\begin{align*}
b&:C_k(\mathfrak{A})\to C_{k-1}(\mathfrak{A}),\\
b&(a_0\otimes a_1\otimes \cdots\otimes a_k):=\sum_{j=0}^{k-1}(-1)^{j}a_0\otimes \cdots\otimes a_ja_{j+1}\otimes\cdots \otimes \cdots \otimes a_k+\\
&\qquad\qquad\qquad\qquad\qquad +(-1)^k a_ka_0\otimes a_1\otimes \cdots\otimes a_{k-1}.
\end{align*}
We write $C_k^\lambda(\mathfrak{A}):=C_k(\mathfrak{A})/(1-\Lambda)C_k(\mathfrak{A})$ and define $C^k_\lambda(\mathfrak{A})\subseteq C^k(\mathfrak{A})$ as the sub-complex of $\Lambda$-invariant elements. 

\begin{deef}
Let $\mathfrak{A}$ be a unital Banach algebra. 

The Hochschild homology of $\mathfrak{A}$ is defined as the homology of $(C_*(\mathfrak{A}),b)$ and is denoted by $HH_*(\mathfrak{A})$. The Hochschild cohomology of $\mathfrak{A}$ is defined as the cohomology of $(C^*(\mathfrak{A}),b)$ and is denoted by $HH^*(\mathfrak{A})$.

The cyclic homology of $\mathfrak{A}$ is defined as the homology of $(C_*^\lambda(\mathfrak{A}),b)$ and is denoted by $HC_*^\lambda(\mathfrak{A})$. The cyclic cohomology of $\mathfrak{A}$ is defined as the cohomology of $(C^*_\lambda(\mathfrak{A}),b)$ and is denoted by $HC^*_\lambda(\mathfrak{A})$.
\end{deef}

We use the notation $HC_*^\lambda(\mathfrak{A})$ to distinguish the model we use from that of the bi-complex normally used to define $HC_*(\mathfrak{A})$, even if the two produce isomorphic results. We sometimes consider reduced (co)-homology groups, i.e. $\ker(b)/\overline{\im(b)}$, which we indicate by an additional (super)subscript red. For instance, $HC_*^{\lambda, \rm red}(\mathfrak{A}):=\ker(b:C_k^\lambda(\mathfrak{A})\to C_{k-1}^\lambda(\mathfrak{A}))/\overline{bC_{k+1}^\lambda(\mathfrak{A})}$.

\subsection{Connes' SBI-sequence, Chern character and character formula} 

Cyclic (co)homology comes with a rich structural theory, see for instance \cite{connesncdg,connebook,lodaybook}. The following result of Connes relates cyclic (co)homology with Hochschild (co)homology, a result proven using the cyclic bicomplex.

\begin{thm}[Connes \cite{connebook}, Chapter III.$1.\gamma$]
\label{sbithm}
Let $\mathfrak{A}$ be a Banach algebra. Then there is a long exact sequence in homology 
$$\cdots \xrightarrow{I} HC_{*+2}^\lambda(\mathfrak{A})\xrightarrow{S}HC_{*}^\lambda(\mathfrak{A})\xrightarrow{B}HH_{*+1}(\mathfrak{A})\xrightarrow{I} HC_{*+1}^\lambda(\mathfrak{A})\xrightarrow{S}HC_{*-1}^\lambda(\mathfrak{A})\xrightarrow{B}\cdots, $$
and a long exact sequence in cohomology.
$$\cdots \xrightarrow{B} HC^{*-2}_\lambda(\mathfrak{A})\xrightarrow{S}HC^{*}_\lambda(\mathfrak{A})\xrightarrow{I}HH^{*}(\mathfrak{A})\xrightarrow{B} HC_\lambda^{*-1}(\mathfrak{A})\xrightarrow{S}HC^{*+1}_\lambda(\mathfrak{A})\xrightarrow{I}\cdots, $$
\end{thm}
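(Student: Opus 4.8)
The plan is to deduce the statement from the standard cyclic bicomplex, reducing everything to one short exact sequence of complexes while keeping track of continuity with respect to $\tilde{\otimes}$. Assume $\mathfrak{A}$ unital, as in the rest of this section. First I would pass from Connes' complex $(C^\lambda_*(\mathfrak{A}),b)$ to the total complex of the cyclic bicomplex $CC(\mathfrak{A})$: the latter has a copy of the Hochschild complex $(C_*(\mathfrak{A}),b)$ in each even column, a copy of $(C_*(\mathfrak{A}),-b')$ in each odd column, where $b'$ is the Hochschild differential with its last (cyclic) term deleted, and horizontal differentials alternating between $1-\Lambda$ and the norm operator $N=1+\Lambda+\cdots+\Lambda^k$ on $C_k(\mathfrak{A})$. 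The odd columns are acyclic via the contracting homotopy $s(a_0\otimes\cdots\otimes a_k):=1\otimes a_0\otimes\cdots\otimes a_k$, which is continuous for $\tilde{\otimes}$ since $\|1\|<\infty$; combined with the characteristic-zero comparison of the two models of cyclic homology over $\C$ (cf. \cite[Chapter III.1.$\gamma$]{connebook}, \cite{lodaybook}) this yields $H_n(\mathrm{Tot}(CC(\mathfrak{A})))\cong HC^\lambda_n(\mathfrak{A})$, and dually for cohomology, where the $b'$-columns of the dual bicomplex stay acyclic because $s$ is bounded.

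The core step is the short exact sequence of complexes obtained by splitting off the first two columns. Let $CC^{\leq1}$ denote the subcomplex consisting of columns $0$ and $1$. On one hand, the obvious short exact sequence relating $\mathrm{Tot}(CC^{\leq1})$ to its Hochschild column and its (shifted) $b'$-column, together with acyclicity of the latter, identifies $H_n(\mathrm{Tot}(CC^{\leq1}))$ with $HH_n(\mathfrak{A})$, the isomorphism being induced by the inclusion of the Hochschild column; composition with this isomorphism defines $I$. On the other hand, deleting columns $0$ and $1$ and re-indexing exhibits $CC(\mathfrak{A})/CC^{\leq1}$ as $\mathrm{Tot}(CC(\mathfrak{A}))$ shifted down by $2$, which supplies the periodicity map $S$. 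The long exact sequence in homology of
$$0\longrightarrow \mathrm{Tot}(CC^{\leq1})\longrightarrow \mathrm{Tot}(CC(\mathfrak{A}))\longrightarrow \mathrm{Tot}(CC(\mathfrak{A}))[-2]\longrightarrow 0$$
then reads $\cdots\to HH_n(\mathfrak{A})\xrightarrow{I}HC^\lambda_n(\mathfrak{A})\xrightarrow{S}HC^\lambda_{n-2}(\mathfrak{A})\xrightarrow{B}HH_{n-1}(\mathfrak{A})\to\cdots$, with $B$ the connecting homomorphism (Connes' $B$-operator); after re-indexing this is the asserted homology sequence. The identical argument with the dual bicomplex produces the cohomology sequence, with $S$ now of degree $+2$, $I\colon HC^n_\lambda(\mathfrak{A})\to HH^n(\mathfrak{A})$, and $B\colon HH^n(\mathfrak{A})\to HC^{n-1}_\lambda(\mathfrak{A})$.

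The homological algebra here is entirely formal once the bicomplex is in place, so the step I expect to demand the most care is the functional-analytic bookkeeping. All complexes involved consist of Banach spaces and continuous maps, but their (co)homology is taken in the category of vector spaces; I therefore need that (i) the comparison $HC^\lambda\cong HC$ persists at the level of these (possibly non-Hausdorff) groups — this is where working over $\C$ is used, exactly as in the classical references — and (ii) passing to continuous duals does not disturb the long exact sequences, which holds because the short exact sequences of Banach spaces in play are continuously split, so $(-)^*$ is exact on them. No further feature of the Banach setting interferes: the argument uses only continuity of the multiplication, of the flip maps, and of the unit inclusion — precisely the properties isolated before the statement — and is in substance the one in \cite[Chapter III.1.$\gamma$]{connebook} and \cite{lodaybook}.
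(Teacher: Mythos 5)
Your proposal is correct and follows exactly the route the paper points to: the paper offers no proof of its own, citing Connes \cite{connebook} and remarking only that the result is "proven using the cyclic bicomplex," which is precisely the decomposition (acyclic $b'$-columns, the two-column subcomplex computing Hochschild homology, the shift giving $S$, and the connecting map giving $B$) that you carry out. Your added care about continuity of the extra degeneracy, the characteristic-zero comparison with the $\lambda$-complex, and the continuous splitting needed to dualize is exactly the right bookkeeping for the Banach setting the paper works in.
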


The sequence from Theorem \ref{sbithm} is called the SBI-sequence.

\begin{deef}
A bounded Fredholm module on $\mathfrak{A}$ is a triple $(\pi, \mathcal{H},F)$ where $\pi$ is a bounded representation of $\mathfrak{A}$ on the Hilbert space $\mathcal{H}$ and $F$ is a bounded operator such that 
\begin{equation}
\label{kknoknad}
[F,b],\ b(F^*-F), \ b(F^2-1)\in \mathbb{K}(\mathcal{H}), \quad \forall b\in \pi(\mathfrak{A})\cup \pi(\mathfrak{A})^*.
\end{equation}
If $b(F^*-F)= b(F^2-1)=0$, we say that $(\pi, \mathcal{H},F)$ is a strict Fredholm module. 

When $\mathcal{H}$ is equipped with a $\Z/2$-grading, in which $F$ is odd and $\mathfrak{A}$ acts as even operators,  we say that $(\pi, \mathcal{H},F)$ is an even Fredholm module, or of parity $0$. Otherwise we say that $(\pi, \mathcal{H},F)$  is an odd Fredholm module, or of parity $1$. 
\end{deef}

We recall the space of Schatten class operators 
$$\mathcal{L}^p(\mathcal{H})=\{T\in \mathbb{K}(\mathcal{H}): \mathrm{Tr}(|T|^p)<\infty\},$$
and the space of weak Schatten class operators 
$$\mathcal{L}^{(p,\infty)}(\mathcal{H})=\{T\in \mathbb{K}(\mathcal{H}): \mu_k(T)=O(k^{-1/p})\}.$$
Here $\mu_k$ denotes the $k$:th singular value. See more in \cite{simonbook} or \cite[Chapter IV, Appendix C]{connebook}. A Fredholm module $(\pi, \mathcal{H},F)$ of $\mathfrak{A}$ is said to be $p$-summable if 
$$
[F,b]\in \mathcal{L}^p(\mathcal{H}),\ b(F^*-F), \ b(F^2-1)\in \mathcal{L}^{p/2}(\mathcal{H}), \quad \forall b\in \pi(\mathfrak{A})\cup \pi(\mathfrak{A})^*.
$$
Similarly, one defines the notion of a $(p,\infty)$-summable Fredholm module. Examples of $(p,\infty)$-summable Fredholm modules can be found below in Section \ref{examoedonaodhol}.

We write $\mathrm{STr}:\mathcal{L}^1(\mathcal{H})\to \C$ for the supertrace when $\mathcal{H}$ is $\Z/2$-graded. For notational convenience, we use the same notation also for the trace when $\mathcal{H}$ is ungraded. For an extended limit $\omega$, i.e. a state $\omega\in \ell^\infty(\N)^*$ annihilating $c_0(\N)$, we let $\mathrm{STr}_\omega:\mathcal{L}^1(\mathcal{H})\to \C$ denote the associated super-Dixmier trace with the same convention as above if $\mathcal{H}$ is ungraded. Write 
$$\#n:=\begin{cases} 
0, \;&\mbox{$n$ even},\\
1, \;&\mbox{$n$ odd}.\end{cases}$$

\begin{thm}
\label{conoancoand}
Let  $(\pi, \mathcal{H},F)$ be a strict $n+1$-summable bounded Fredholm module of parity $\#n$. Then the cochain 
$$\mathrm{ch}_{CC}^n(F).(a_0,\ldots, a_n):=c_n\mathrm{STr}\left(F[F,a_0]\cdots [F,a_{n}]\right),$$
is a cyclic $n$-cocycle. Here 
$$c_n=
\begin{cases}
(-1)^{n(n-1)/2}\Gamma(n/2+1),\; &\mbox{for $n$ even},\\
\sqrt{2i}(-1)^{n(n-1)/2}\Gamma(n/2+1),\; &\mbox{for $n$ odd},
\end{cases}$$ 
is choosen so that the class $[\mathrm{ch}_{CC}^n(F)]\in HC^n_\lambda(\mathfrak{A})$ satisfies that 
$$S[\mathrm{ch}_{CC}^n(F)]=[\mathrm{ch}_{CC}^{n+2}(F)].$$
\end{thm}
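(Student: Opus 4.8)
The plan is to verify directly that $\mathrm{ch}_{CC}^n(F)$ is a Hochschild cocycle, that it is cyclic, and then to fix the constants $c_n$ by comparing $S[\mathrm{ch}_{CC}^n(F)]$ with $[\mathrm{ch}_{CC}^{n+2}(F)]$ at the cochain level up to a Hochschild coboundary. First I would record the elementary identity $[F,a_0]\cdots[F,a_n]$ is, up to the supertrace, insensitive to the difference between $F$ and $F^{-1}$ or $F^*$ on the nose because strictness gives $F^2=1$ and $F^*=F$ on the relevant subspaces; this lets me treat $F$ as a genuine symmetry. With that in hand, the Hochschild cocycle property $b\,\mathrm{ch}_{CC}^n(F)=0$ follows from the graded-trace property of $\mathrm{STr}$ together with the telescoping identity $[F,ab]=[F,a]b+a[F,b]$ and the fact that $F[F,a]=-[F,a]F$ (again from $F^2=1$), so that inserting a product $a_ja_{j+1}$ and resumming with signs collapses the boundary terms; the extra cyclic term $(-1)^na_na_0\otimes\cdots$ is absorbed using $\mathrm{STr}(ST)=(-1)^{|S||T|}\mathrm{STr}(TS)$ and the parity bookkeeping encoded by $\#n$.

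Next I would check cyclicity, i.e. $\Lambda\,\mathrm{ch}_{CC}^n(F)=\mathrm{ch}_{CC}^n(F)$. Moving $a_0$ around the trace and using $F[F,a_0]=[F,a_0]F = -F[F,a_0]+2F^2 a_0 - \dots$ carefully — more cleanly, writing $F[F,a_0]\cdots[F,a_n] = \pm [F,a_1]\cdots[F,a_n]F[F,a_0]$ after a cyclic permutation under $\mathrm{STr}$ and then pulling the lone $F$ left through the commutators, each pass introducing a sign $(-1)$ from $F[F,a]=-[F,a]F$ — produces exactly the sign $(-1)^n$ demanded by the definition of $\Lambda$ at degree $n$, together with the global sign from the graded trace. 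This is a finite sign computation and should go through once the parity of $\mathcal{H}$, which is $\#n$, is matched with the parity of the cochain.

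The substantive step is the $S$-relation. I would use the standard model for the periodicity operator $S$ on cyclic cohomology via the bicomplex, or equivalently Connes' explicit formula expressing $S c$ for a cyclic cocycle $c$ as a specific combination of $c$ with inserted units and the operator $B$; the cleanest route is to exhibit a Hochschild cochain $\psi$ of degree $n+1$ with $b\psi = \mathrm{ch}_{CC}^{n+2}(F) - (S\,\mathrm{ch}_{CC}^n(F))$, where the natural candidate for $\psi$ is again of the form $a_0\otimes\cdots\otimes a_{n+1}\mapsto \mathrm{const}\cdot\mathrm{STr}(F[F,a_0]\cdots[F,a_{n+1}])$ but now \emph{not} cyclic — it is the ``transgression'' cochain. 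Computing $b\psi$ reduces, after the same telescoping as before, to the difference of two multiples of $\mathrm{ch}_{CC}^{n+2}(F)$, and matching coefficients forces the recursion $c_{n+2}/c_n = -(n+2)(n+1)/(\text{something})$; solving it with the normalization at $n=0$ and $n=1$ gives the stated $\Gamma(n/2+1)$ (with the $\sqrt{2i}$ in odd degree coming from the odd periodicity normalization, as in Connes' book).

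The main obstacle I anticipate is purely combinatorial: correctly tracking the interplay of three sign sources — the $(-1)^j$ in $b$, the sign $(-1)$ each time $F$ is pulled past a commutator $[F,a]$, and the Koszul sign $(-1)^{|S||T|}$ in the graded trace — in the $S$-transgression computation, and pinning down the exact scalar so that the recursion yields $\Gamma(n/2+1)$ rather than some shifted factorial. A secondary technical point is justifying all trace manipulations under only $n+1$-summability: one must repeatedly invoke Hölder's inequality for Schatten norms, $\|[F,a_0]\cdots[F,a_n]\|_1 \lesssim \prod\|[F,a_i]\|_{n+1}$, and check that every intermediate expression obtained by regrouping is still trace-class so that cyclicity of the trace is legitimate; this is routine given strictness but should be stated. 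I expect that once the transgression cochain is written down and one computes $b$ of it honestly in low degrees ($n=0$ and $n=1$) to fix the pattern, the general case follows by the same manipulation verbatim.
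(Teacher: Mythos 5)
Your verification of the Hochschild cocycle property and of cyclicity is sound and is essentially the standard argument (the paper itself offers no proof, deferring to Connes, Chapter IV.$1.\beta$, where the same facts are packaged via the closed graded trace $\omega\mapsto\tfrac12\mathrm{STr}(F[F,\omega])$ on the differential algebra generated by $a$ and $[F,a]$). The anticommutation $F[F,a]=-[F,a]F$ and the Koszul signs work out exactly as you say, modulo the small caveat that strictness only gives $a(F^2-1)=(F^2-1)a=0$ for $a$ in the image of $\pi$ and its adjoint, so one should either assume $\pi$ unital or compress to the essential subspace before treating $F$ as a symmetry.

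The genuine gap is in the $S$-step: your candidate transgression cochain $\psi(a_0,\dots,a_{n+1})=\mathrm{const}\cdot\mathrm{STr}(F[F,a_0]\cdots[F,a_{n+1}])$ is identically zero, so $b\psi=0$ and it cannot interpolate between $S\,\mathrm{ch}_{CC}^n(F)$ and $\mathrm{ch}_{CC}^{n+2}(F)$. Indeed, in the even case ($n$ even) the operator $F[F,a_0]\cdots[F,a_{n+1}]$ is a product of $n+3$ odd operators, hence odd, and the supertrace of an odd operator vanishes; in the odd case, pulling $F$ through the $n+2$ commutators gives $F\omega=-\omega F$, whence $\mathrm{Tr}(F\omega)=\mathrm{Tr}(\omega F)=-\mathrm{Tr}(F\omega)=0$. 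A workable transgression must contain a \emph{bare} algebra element, e.g.\ cochains of the shape $\mathrm{STr}\bigl(F[F,a_0]\cdots a_j\cdots[F,a_{n+1}]\bigr)$ with the $j$-th commutator omitted (summed over $j$ with signs) --- note that the paper's own $\mathfrak{h}_\omega$, with its bare $a_0$, is precisely of this type. Alternatively, one avoids the combinatorics altogether by Connes' route: realize $\mathrm{ch}_{CC}^n(F)$ as the character of a cycle and use that $S$ is cup product with the generator of $HC^2(\C)$, which is where the normalization $\Gamma(n/2+1)$ (and the ratio $c_{n+2}/c_n=-(n/2+1)$, not the one you guessed) comes from. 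Your summability remarks (Hölder for Schatten norms) are correct and suffice to justify all trace manipulations.
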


The reader can find a proof of Theorem \ref{conoancoand} in \cite[Chapter IV.$1.\beta$]{connebook}. Let $K_*^{\rm top}(\mathfrak{A})$ denote the topological $K$-theory of $\mathfrak{A}$. The class $[\mathrm{ch}_{CC}^n(F)]\in HC^n_\lambda(\mathfrak{A})$ is called the Connes-Chern character of $(\pi, \mathcal{H},F)$ and is of interest as it in the pairing 
$$K_*^{\rm top}(\mathfrak{A})\times HC^*_{\lambda}(\mathfrak{A})\to \C,$$
computes the pairing with the $K$-homology class $[F]\in K^*(\mathfrak{A})$ under the index pairing 
$$\langle\cdot,\cdot\rangle: K_*^{\rm top}(\mathfrak{A})\times K^*(\mathfrak{A})\to \Z.$$

\begin{deef}
An unbounded Fredholm module on $\mathfrak{A}$ is a triple $(\pi, \mathcal{H},D)$ where $\pi$ is a bounded representation of $\mathfrak{A}$ on the Hilbert space $\mathcal{H}$ and $D$ is a densely defined self-adjoint operator on $\mathcal{H}$ such that 
\begin{enumerate}
\item For any $b\in \pi(\mathfrak{A})\cup \pi(\mathfrak{A})^*$, we have that $b\Dom(D)\subseteq \Dom(D)$ and $[D,b]$ is bounded in the norm of $\mathcal{H}$. 
\item For any $b\in \pi(\mathfrak{A})\cup \pi(\mathfrak{A})^*$, the operator $b(i\pm D)^{-1}$ is compact on $\mathcal{H}$. 
\end{enumerate}

When $\mathcal{H}$ is equipped with a $\Z/2$-grading, in which $D$ is odd and $\mathfrak{A}$ acts as even operators,  we say that $(\pi, \mathcal{H},D)$ is an even Fredholm module, or of parity $0$. Otherwise we say that $(\pi, \mathcal{H},D)$  is an odd Fredholm module, or of parity $1$.

If for any $b$, we have $b(i\pm D)^{-1}\in \mathcal{L}^p(\mathcal{H})$ or $b(i\pm D)^{-1}\in \mathcal{L}^{(p,\infty)}(\mathcal{H})$ we say that $(\pi, \mathcal{H},D)$ is $p$-summable or $(p,\infty)$-summable, respectively.
\end{deef}

It follows from the results of \cite{sww} that if $(\pi, \mathcal{H},D)$ is an unbounded Fredholm module, then $(\pi, \mathcal{H},D|D|^{-1})$ is a bounded Fredholm module. In fact, the operator estimate of \cite{sww} implies that if $(\pi, \mathcal{H},D)$ is $p$-summable or $(p,\infty)$-summable, then so is $(\pi, \mathcal{H},D|D|^{-1})$.

\begin{thm}
\label{connescharacter}
Let  $(\pi, \mathcal{H},D)$ be an $(n,\infty)$-summable unbounded Fredholm module of parity $\#n$. Fix an extended limit $\omega$. Then the cochain 
$$\mathrm{Hoch}^n_\omega(D).(a_0,\ldots, a_n):=\mathrm{STr}_\omega\left(a_0[D,a_1]\cdots [D,a_{n}](1+D^2)^{-n/2}\right)$$
is a Hochschild $n$-cocycle. The class $[\mathrm{Hoch}^n_\omega(D)]\in HH^n(\mathfrak{A})$ is independent of $\omega$ and satisfies that 
$$[\mathrm{Hoch}^n_\omega(D)]=I[\mathrm{ch}_{CC}^{n}(D|D|^{-1})].$$
\end{thm}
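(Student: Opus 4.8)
The plan is to prove the two assertions of Theorem~\ref{connescharacter} in sequence: first that $\mathrm{Hoch}^n_\omega(D)$ is a bounded Hochschild cocycle, then that its class is independent of $\omega$ and equals $I[\mathrm{ch}^n_{CC}(F)]$ with $F:=D|D|^{-1}$. Throughout I would write $F$ for the phase of $D$, and freely use that by \cite{sww} the triple $(\pi,\mathcal{H},F)$ is an $(n,\infty)$-summable bounded Fredholm module, so that $\mathrm{ch}^n_{CC}(F)$ makes sense after first passing from $F$ to a genuine symmetry (replacing $\mathcal{H}$ by $\mathcal{H}\oplus\mathcal{H}$ and $F$ by the standard strict representative, as in \cite[IV.1]{connebook}); the resulting cyclic class is unchanged, so I will suppress this step.

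\emph{Boundedness and the cocycle property.} The cochain is well defined: each $[D,a_j]$ is bounded, $a_0$ is bounded, and $(1+D^2)^{-n/2}\in\mathcal{L}^{(1,\infty)}(\mathcal{H})$ after multiplying by any one of the bounded factors --- more precisely, using the $(n,\infty)$-summability of $b(i\pm D)^{-1}$ and a Hölder inequality for weak Schatten ideals (interpolating $n$ factors of $\mathcal{L}^{(n,\infty)}$ against the commutators), the product $a_0[D,a_1]\cdots[D,a_n](1+D^2)^{-n/2}$ lies in $\mathcal{L}^{(1,\infty)}(\mathcal{H})\subseteq\Dom(\mathrm{STr}_\omega)$, with the Dixmier trace bounded by a constant times $\|a_0\|\prod_{j\ge1}\|[D,a_j]\|$, hence by $\prod_j\|a_j\|_{\mathfrak{A}}$. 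For the cocycle property $b\,\mathrm{Hoch}^n_\omega(D)=0$, I would expand $b$ applied to the $(n+1)$-cochain, use the Leibniz rule $[D,a_ja_{j+1}]=[D,a_j]a_{j+1}+a_j[D,a_{j+1}]$ to pair up consecutive terms, and observe that all interior terms telescope; the only obstruction to exact cancellation is the last term, where cyclicity must be invoked together with the fact that $(1+D^2)^{-n/2}$ does \emph{not} commute with $\mathfrak{A}$. Here the key point is that commuting a bounded operator past $(1+D^2)^{-n/2}$ produces an error that is trace-class in the ordinary sense --- indeed $[a,(1+D^2)^{-n/2}]$ is $O(k^{-1-\varepsilon})$-summable --- and \emph{any} $\mathcal{L}^1$-perturbation is annihilated by a Dixmier trace. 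This "traciality modulo $\mathcal{L}^1$" of $\mathrm{STr}_\omega$ on $\mathcal{L}^{(1,\infty)}$ is exactly what forces $b\,\mathrm{Hoch}^n_\omega(D)=0$; I would isolate it as a lemma.

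\emph{Identification with $I[\mathrm{ch}^n_{CC}(F)]$ and independence of $\omega$.} The map $I\colon HC^n_\lambda\to HH^n$ is induced by the inclusion of cyclic cochains into all cochains, so I must show that $\mathrm{Hoch}^n_\omega(D)$ and $\mathrm{ch}^n_{CC}(F)$ (viewed as an ordinary Hochschild cochain) differ by a Hochschild coboundary. The strategy is the standard "symbol computation'': in $\mathrm{ch}^n_{CC}(F).(a_0,\dots,a_n)=c_n\,\mathrm{STr}(F[F,a_0]\cdots[F,a_n])$ write $F=D(1+D^2)^{-1/2}$ up to a smoothing correction, so that $[F,a_j]$ agrees with $[D,a_j](1+D^2)^{-1/2}$ modulo operators in a smaller ideal, and then collect the $n$ resulting factors of $(1+D^2)^{-1/2}$ next to each other using the same "commute modulo $\mathcal{L}^1$" principle. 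After this rearrangement the right-hand side becomes $c_n\,\mathrm{STr}\bigl(F\cdot a_0[D,a_1]\cdots[D,a_n](1+D^2)^{-n/2}\bigr)$ up to lower-order terms, and one checks that the extra $F$ and the constant $c_n$ combine with the normalization of the Dixmier trace (the factor is what makes $\mathrm{STr}\to\mathrm{STr}_\omega$ on the critical ideal) to give precisely $\mathrm{Hoch}^n_\omega(D)$ up to a coboundary; the discrepancy terms, being built from commutators that land in $\mathcal{L}^1$, define an ordinary trace-class cochain whose Hochschild class vanishes by an explicit transgression. Independence of $\omega$ then follows because two choices $\omega,\omega'$ give cochains differing by something of the form $b(\text{cochain})$ plus a term that is a difference of Dixmier traces of a \emph{fixed} $\mathcal{L}^{(1,\infty)}$ operator-valued cochain; such a difference is continuous and, on the cocycle level, exact --- alternatively, one simply notes $I[\mathrm{ch}^n_{CC}(F)]$ is manifestly $\omega$-independent, so \emph{a posteriori} the class $[\mathrm{Hoch}^n_\omega(D)]$ must be too.

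\emph{Main obstacle.} The technical heart is controlling the commutators $[a,(1+D^2)^{-s}]$ well enough to justify all the "modulo $\mathcal{L}^1$'' rearrangements simultaneously --- i.e. proving that after commuting every bounded factor into its final position, the accumulated error is a genuine trace-class operator-valued cochain and not merely compact. For this I would rely on the operator-Lipschitz estimates of \cite{sww} (or a Cauchy–Schwarz/interpolation argument: $[D,a]$ bounded plus $(i\pm D)^{-1}$ in $\mathcal{L}^{(n,\infty)}$ gives $[a,(1+D^2)^{-1/2}]\in\mathcal{L}^{(n/(n-1)\cdot\text{something})}$, iterated), and this is where the hypothesis of $(n,\infty)$-summability (as opposed to $(n-\varepsilon,\infty)$) is used at its sharpest. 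Everything else --- the telescoping in the cocycle identity, the transgression producing the coboundary, the constants --- is bookkeeping that I would carry out in parallel with, and guided by, the corresponding computation for the bounded Chern character in \cite[IV.1.$\beta$]{connebook}.
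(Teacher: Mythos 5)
The paper does not prove this theorem at all: it is quoted as Connes' character formula with a reference to \cite[Chapter IV.$2.\gamma$]{connebook}, so any proof you give must stand on its own. Your first part (boundedness and $b\,\mathrm{Hoch}^n_\omega(D)=0$) is structurally sound: the telescoping plus the traciality of $\mathrm{STr}_\omega$ reduces everything to showing that $[a,(1+D^2)^{-n/2}]$ lies in the kernel of the Dixmier trace, and the commutator estimate you flag (one extra order of decay, landing in $\mathcal{L}^{n/(n+1),\infty}\subseteq\mathcal{L}^1\subseteq\mathcal{L}^{1,\infty}_0$) is a genuine but provable lemma under the stated hypotheses.

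The identification $[\mathrm{Hoch}^n_\omega(D)]=I[\mathrm{ch}^n_{CC}(F)]$, however, has a fatal gap, and it sits exactly where the theorem's content lies. Your strategy is to start from $c_n\,\mathrm{STr}\bigl(F[F,a_0]\cdots[F,a_n]\bigr)$ and ``rearrange modulo $\mathcal{L}^1$'' until it becomes the Dixmier-trace expression. This cannot work for two reasons. First, the functional in $\mathrm{ch}^n_{CC}(F)$ is the \emph{ordinary} supertrace of a genuinely trace-class operator (a product of $n+1$ factors from $\mathcal{L}^{n,\infty}$), and the ordinary trace does \emph{not} vanish on $\mathcal{L}^1$; so replacing $[F,a_j]$ by $[D,a_j](1+D^2)^{-1/2}$ ``modulo a smaller ideal'' and commuting factors ``modulo $\mathcal{L}^1$'' changes the value of $\mathrm{STr}$ by amounts you have no control over. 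Second, after your rearrangement the operator $Fa_0[D,a_1]\cdots[D,a_n](1+D^2)^{-n/2}$ lies only in $\mathcal{L}^{1,\infty}$, so $\mathrm{STr}$ of it is not even defined, and there is no ``normalization of the Dixmier trace'' that converts $\mathrm{STr}$ into $\mathrm{STr}_\omega$: the two functionals have essentially disjoint domains of nontrivial definition ($\mathrm{STr}_\omega$ annihilates all of $\mathcal{L}^1$). Passing from the finite trace of the product of $n+1$ commutators to the singular trace of the product of $n$ commutators against $(1+D^2)^{-n/2}$ is precisely the analytic depth of Connes' character formula; the known proofs go through a transgression/double-limit argument or zeta-residue asymptotics (\cite[IV.$2.\gamma$]{connebook}, \cite{sukolord}), not through ideal-membership bookkeeping. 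As written, your argument would ``prove'' the false statement that $\mathrm{Tr}(T)=\mathrm{Tr}_\omega(T')$ whenever $T-T'$ is formally a sum of commutator errors, which is exactly the step that needs a new idea.
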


Theorem \ref{connescharacter} is called Connes' character formula and its proof can be found in \cite[Chapter IV.$2.\gamma$]{connebook}.

\begin{cor}
\label{onoandaoin}
Let  $(\pi, \mathcal{H},D)$ be an $(n,\infty)$-summable unbounded Fredholm module of parity $\#n$. Assume that
$$[\mathrm{Hoch}^n(D)]\neq 0\in HH^{n}(\mathfrak{A}).$$
Then $F:=D|D|^{-1}$ does not make $(\pi, \mathcal{H},F)$ into an $n-1$-summable bounded Fredholm module of parity $\#n$.
\end{cor}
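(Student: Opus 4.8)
The plan is to argue by contraposition: we assume that $F := D|D|^{-1}$ makes $(\pi,\mathcal H,F)$ into an $(n-1)$-summable bounded Fredholm module of parity $\#n$, and derive from this that $[\mathrm{Hoch}^n(D)] = 0$ in $HH^n(\mathfrak A)$. The engine of the argument is the SBI-sequence of Theorem \ref{sbithm}, which tells us that the image of $I: HC^{n}_\lambda(\mathfrak A)\to HH^n(\mathfrak A)$ is exactly $\ker(B: HH^n(\mathfrak A)\to HC^{n-1}_\lambda(\mathfrak A))$. By Connes' character formula (Theorem \ref{connescharacter}) we have $[\mathrm{Hoch}^n(D)] = I[\mathrm{ch}_{CC}^n(F)]$, so the class $[\mathrm{Hoch}^n(D)]$ lies in the image of $I$ and is built from the Connes--Chern character $[\mathrm{ch}_{CC}^n(F)]\in HC^n_\lambda(\mathfrak A)$.

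The key point is that, because $S[\mathrm{ch}_{CC}^n(F)] = [\mathrm{ch}_{CC}^{n+2}(F)]$ (the normalization in Theorem \ref{conoancoand}), the class $[\mathrm{ch}_{CC}^n(F)]$ is in the image of the periodicity operator $S$ whenever the Connes--Chern character can be represented already at level $n-2$; more to the point, under the hypothesis of $(n-1)$-summability one can instead write $[\mathrm{ch}_{CC}^n(F)] = S[\mathrm{ch}_{CC}^{n-2}(F)]$ with $[\mathrm{ch}_{CC}^{n-2}(F)]\in HC^{n-2}_\lambda(\mathfrak A)$ a well-defined class, since $(n-1)$-summability makes $F[F,a_0]\cdots[F,a_{n-2}]$ trace-class (one factor $[F,a_i]\in\mathcal L^{n-1}$, and $n-1$ such factors give $\mathcal L^1$; the parity condition $\#n = \#(n-2)$ ensures the supertrace formula produces a cyclic cocycle of the correct degree). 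Then $I[\mathrm{ch}_{CC}^n(F)] = I\,S[\mathrm{ch}_{CC}^{n-2}(F)]$, and exactness of the cohomological SBI-sequence at $HH^n$ — specifically that $I\circ S = 0$ there, since $I S$ factors through two consecutive arrows whose composite vanishes (the image of $S$ is the kernel of $I$ in the relevant spot, or equivalently $\im(S)\subseteq \ker(I)$ follows from $IS=0$, which is immediate from the long exact sequence) — forces $[\mathrm{Hoch}^n(D)] = I[\mathrm{ch}_{CC}^n(F)] = 0$.

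I would carry this out in the following order. First, record that $F=D|D|^{-1}$ is a bounded Fredholm module with the same summability behaviour as $D$ by the cited result of \cite{sww}, so that $\mathrm{ch}_{CC}^n(F)$ is defined and $[\mathrm{Hoch}^n(D)] = I[\mathrm{ch}_{CC}^n(F)]$ via Theorem \ref{connescharacter}. Second, under the contrapositive hypothesis of $(n-1)$-summability, verify that $\mathrm{ch}_{CC}^{n-2}(F)$ is a well-defined cyclic $(n-2)$-cocycle (checking the trace-class property and that $F^2=1$, $F^*=F$ may be arranged after the standard compact perturbation, as in Connes' treatment), and that $S[\mathrm{ch}_{CC}^{n-2}(F)] = [\mathrm{ch}_{CC}^n(F)]$ by the normalization of Theorem \ref{conoancoand}. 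Third, invoke exactness of the SBI-sequence, $I\circ S = 0$, to conclude $[\mathrm{Hoch}^n(D)] = 0$, contradicting the hypothesis. The main obstacle I anticipate is the second step: one must be careful that the strictness hypothesis in Theorem \ref{conoancoand} (needed to even write down $\mathrm{ch}_{CC}$) can be restored at level $n-2$ under merely $(n-1)$-summability — i.e. that the standard trick of replacing $F$ by a genuine strict Fredholm module via a finite-rank or trace-class perturbation does not disturb the summability bound $[F,a]\in\mathcal L^{n-1}$ nor the cohomology class, and that the bookkeeping of the constants $c_n$ and the parity $\#n$ versus $\#(n-2)$ is consistent. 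This is routine but is the place where the argument could go wrong if the summability exponents are mishandled.
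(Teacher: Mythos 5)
Your proposal is correct and follows essentially the same route as the paper's own proof: write $[\mathrm{Hoch}^n(D)]=I[\mathrm{ch}_{CC}^{n}(F)]=IS[\mathrm{ch}_{CC}^{n-2}(F)]=0$ using Connes' character formula, the normalization $S[\mathrm{ch}_{CC}^{n-2}(F)]=[\mathrm{ch}_{CC}^{n}(F)]$, and exactness ($I\circ S=0$) of the SBI-sequence. The extra care you take over the summability bookkeeping, parity $\#(n-2)=\#n$, and restoring strictness is reasonable but not part of the paper's (one-line) argument.
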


\begin{proof}
If $(\pi, \mathcal{H},F)$ is an $n-1$-summable bounded Fredholm module of parity $\#n$, then by Theorem \ref{sbithm} and \ref{connescharacter}, 
$$[\mathrm{Hoch}^n(D)]=I[\mathrm{ch}_{CC}^{n}(D|D|^{-1})]=IS[\mathrm{ch}_{CC}^{n-2}(D|D|^{-1})]=0.$$
\end{proof}

\subsection{The Connes-Karoubi sequence and pairings with cohomology}

For Banach algebras, the pairing of Connes' SBI sequence with algebraic $K$-theory, and a relative group for the map into topological $K$-theory, was studied by Connes-Karoubi \cite{conneskaroubi}. We recall it here and provide an application to obstructions against finite summability of Fredholm modules therenext.

\begin{thm}[Connes-Karoubi \cite{conneskaroubi}]
\label{ckadoandona}
Let $\mathfrak{A}$ be a Banach algebra. Then there is a commuting diagram with exact rows:
\[
\begin{CD}
\ldots@>>>K_{n+1}^{\rm top}(\mathfrak{A}) @>>> K_n^{\rm rel}(\mathfrak{A})  @>>> K_n^{\rm alg}(\mathfrak{A})@>>>K_n^{\rm top}(\mathfrak{A})@>>>\ldots  \\
@.@V\ch_{n+1} VV@V\ch_n^{\rm rel}VV @V\frac{1}{n!} D_nVV@V\ch_n VV @.  \\
\ldots@>>>HC^\lambda_{n+1}(\mathfrak{A}) @>S>> HC^\lambda_{n-1}(\mathfrak{A})@>\frac{1}{n}B>>HH_n(\mathfrak{A})@>I>>HC^\lambda_n(\mathfrak{A})@>>>\ldots  \\
\end{CD}.\]
\end{thm}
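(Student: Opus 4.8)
The plan is to follow Connes--Karoubi \cite{conneskaroubi} essentially verbatim, adapting the bookkeeping to the Banach-algebra setting and to our normalization of the cyclic complex $C^\lambda_*$. First I would construct the relative $K$-theory groups $K_n^{\rm rel}(\mathfrak{A})$ as the homotopy groups of the homotopy fiber of the comparison map $K^{\rm alg}(\mathfrak{A})\to K^{\rm top}(\mathfrak{A})$ between the algebraic $K$-theory spectrum (à la Quillen's plus-construction on $BGL(\mathfrak{A})^+$) and the topological $K$-theory spectrum of the Banach algebra $\mathfrak{A}$ (which has homotopy groups $\pi_n(GL_\infty(\mathfrak{A}))$ for $n\ge 1$ and $K_0^{\rm top}(\mathfrak{A})$ in degree $0$). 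The long exact sequence of a homotopy fiber then gives the exact top row of the diagram automatically. The analytic input peculiar to Banach algebras is that $K^{\rm top}_*(\mathfrak{A})$ is a homotopy-invariant, half-exact functor with Bott periodicity; this is standard and I would simply cite it.

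Next I would produce the Chern characters. The map $D_n$ is built from the algebraic Chern character on $K_n^{\rm alg}$ landing in (negative/periodic) cyclic homology; concretely, using the Dennis trace / relative Chern--Simons construction, one sends a cycle representing a relative class — a pair consisting of an idempotent (or invertible) over $\mathfrak{A}$ together with a topological path trivializing it — to an explicit cyclic chain whose boundary records the difference of the two Chern characters. The commutativity of the two left-hand squares is then the assertion that $\ch_n\circ(\text{forget})$ agrees with $I\circ\frac{1}{n!}D_n$ up to the coboundary produced by the path, and that $\ch_{n+1}^{\rm top}$ maps to $\frac{1}{n}B D_n$ under the relative map; both are verified by a direct chain-level computation using the explicit transgression formulas. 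The constants $\frac{1}{n!}$ and $\frac{1}{n}$ are exactly the ones needed to make these chain homotopies hold on the nose and to be compatible with the periodicity operator $S$, as in \cite[Part II]{conneskaroubi}; I would pin them down by testing on the universal examples $GL_n$ and $M_n(\mathfrak{A})$.

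The main obstacle I anticipate is not conceptual but book-keeping: matching our model $HC^\lambda_*$ (the quotient complex $C_*/(1-\Lambda)$ with the normalization in Theorem \ref{conoancoand}) against the $(b,B)$-bicomplex model used in \cite{conneskaroubi}, and threading the isomorphism through the Chern--Simons transgression so that the $S$, $B$, $I$ maps in the bottom row literally coincide with those in Theorem \ref{sbithm}. A secondary subtlety is continuity: every chain map above must be bounded for the projective tensor product, which forces one to work with the completed complexes throughout and to check that the homotopy operators (the transgressions) are themselves bounded multilinear maps — this is where the Banach (indeed operator-algebra) structure of $\mathfrak{A}$, and the completely bounded multiplication, is used. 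Since none of this differs in substance from the original argument once the functors are set up, I would present the construction and the two square-commutativity identities in detail and refer to \cite{conneskaroubi,kaadcomparison} for the remaining diagram chases.
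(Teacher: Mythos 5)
The paper gives no proof of this theorem: it is quoted verbatim from Connes--Karoubi \cite{conneskaroubi}, and your outline is a faithful summary of the construction in that reference (relative $K$-theory as the homotopy fiber of $K^{\rm alg}\to K^{\rm top}$, the Dennis trace and its Chern--Simons transgression for $\ch^{\rm rel}_n$, normalizations pinned down by compatibility with $S$). One minor correction: the theorem is stated for an arbitrary Banach algebra with the projective tensor product, so the appeal at the end to an operator-algebra structure and completely bounded multiplication is not needed --- ordinary boundedness of the multilinear transgression maps suffices.
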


Here $K_*^{\rm alg}(\mathfrak{A})$ denotes the algebraic $K$-theory of $\mathfrak{A}$ and $K_*^{\rm rel}(\mathfrak{A})$ is the relative group for the forgetful map $K_*^{\rm alg}(\mathfrak{A})\to K_*^{\rm top}(\mathfrak{A})$ considered in \cite{conneskaroubi}. In the same spirit as in Corollary \ref{onoandaoin}, Theorem \ref{ckadoandona} provides an obstruction in algebraic $K$-theory to finite summability.

\begin{cor}
\label{onoandaoinvariation}
Let $n\in \N$, and $(\pi, \mathcal{H},F)$ be a bounded Fredholm module of parity $\#n$ for a Banach algebra $\mathfrak{A}$. Assume that there is an element 
$$x\in \mathrm{im}(K_n^{\rm alg}(\mathfrak{A})\to K_n^{\rm top}(\mathfrak{A})),$$
such that the index pairing is nontrivial:
$$\langle x,[(\pi, \mathcal{H},F))\rangle \neq 0.$$
Then $F$ does not define an $n-1$-summable bounded Fredholm module of parity $\#n$.
\end{cor}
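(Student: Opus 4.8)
The plan is to argue by contradiction, combining Corollary \ref{onoandaoin} for the bounded picture with the compatibility of the Connes-Chern character with the Connes-Karoubi sequence as stated in Theorem \ref{ckadoandona}. Suppose $F$ does define an $n-1$-summable bounded Fredholm module of parity $\#n$ on $\mathfrak{A}$. Then the Connes-Chern character $[\mathrm{ch}_{CC}^{n-1}(F)]\in HC^\lambda_{n-1}(\mathfrak{A})$ is defined, and by Theorem \ref{conoancoand} it satisfies $S[\mathrm{ch}_{CC}^{n-1}(F)]=[\mathrm{ch}_{CC}^{n+1}(F)]$; more to the point, $[\mathrm{ch}_{CC}^{n+1}(F)]=S[\mathrm{ch}_{CC}^{n-1}(F)]$ lies in the image of $S$. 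The index pairing of $x$ with $[(\pi,\mathcal{H},F)]$ is computed, as recalled after Theorem \ref{conoancoand}, by pairing the image of $x$ in $HC^\lambda_*(\mathfrak{A})$ against the Connes-Chern character; and by periodicity one may use the class $[\mathrm{ch}_{CC}^{n+1}(F)]$ in degree $n+1$, or better, work directly in periodic cyclic theory where $S$ is an isomorphism.

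First I would make precise the diagram chase. Write $x=\iota(y)$ with $y\in K_n^{\rm alg}(\mathfrak{A})$ and $\iota:K_n^{\rm alg}(\mathfrak{A})\to K_n^{\rm top}(\mathfrak{A})$ the forgetful map. The commuting square on the right of Theorem \ref{ckadoandona} gives $\ch_n(\iota(y))=I(\tfrac{1}{n!}D_n(y))$ in $HC^\lambda_n(\mathfrak{A})$, so $\ch_n(x)$ lies in the image of $I:HH_n(\mathfrak{A})\to HC^\lambda_n(\mathfrak{A})$. Next I would invoke the exactness of the SBI-sequence (Theorem \ref{sbithm}): the image of $I$ is the kernel of $B$, hence $\ch_n(x)$ dies under $B$. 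The key point is then that the index pairing $\langle x,[(\pi,\mathcal{H},F)]\rangle$ equals the natural pairing $\langle \ch_n(x),[\mathrm{ch}_{CC}^{n}(F)]\rangle_{HC}$ between cyclic homology and cyclic cohomology — this is the content of the character-computes-index statement recalled just after Theorem \ref{conoancoand}. Under the assumption that $F$ is $n-1$-summable, the cocycle $\mathrm{ch}_{CC}^{n}(F)$ is not merely a cyclic cocycle but is $S$ of the lower-degree class $[\mathrm{ch}_{CC}^{n-2}(F)]$ (for $n\geq 2$; the case $n=0,1$ is either vacuous or handled directly since $n-1$-summability then degenerates). By the adjunction between $S$ on cohomology and $S$ on homology, $\langle \ch_n(x),S[\mathrm{ch}_{CC}^{n-2}(F)]\rangle=\langle S\,\ch_n(x),[\mathrm{ch}_{CC}^{n-2}(F)]\rangle$, and since $\ch_n(x)\in \im I=\ker B$, exactness of the homological SBI-sequence forces $S\,\ch_n(x)=0$: indeed $\ker B=\im I$ and $S\circ I=0$. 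Hence the pairing vanishes, contradicting $\langle x,[(\pi,\mathcal{H},F)]\rangle\neq 0$.

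I expect the main obstacle to be bookkeeping rather than conceptual: one must be careful about (i) the precise identification of the index pairing with the cyclic pairing of $\ch_n(x)$ against the Connes-Chern character, including the correct degree and the normalization constants $c_n$ and $\tfrac{1}{n!}$ appearing in Theorems \ref{conoancoand} and \ref{ckadoandona}, and (ii) the low-dimensional edge cases. For $n=0$, $n-1$-summability is not defined in the stated sense, so the statement should be read with $n\geq 1$, and for $n=1$ a $0$-summable bounded Fredholm module means $[F,b]$ is trace class; there the argument still runs via $\mathrm{ch}_{CC}^1(F)=S\,\mathrm{ch}_{CC}^{-1}(F)$ interpreted in periodic theory, but one may prefer to give the direct argument that a trace-class commutator cocycle pairs trivially with the image of algebraic $K_1$, which is exactly the multiplicative-character computation of Connes-Karoubi. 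The cleanest write-up is probably to phrase everything in periodic cyclic (co)homology from the outset, where $S$ is invertible, so that "$\ch_n(x)\in\im I$" becomes "$\ch_n(x)$ is in the image of Hochschild cohomology" and the vanishing of the pairing follows from $S^{-1}\ch_n(x)$ being killed by the relevant boundary — but since the paper has set things up with $HC^\lambda_*$ and the explicit SBI maps, I would keep to that language and simply invoke exactness at the node $HH_n\to HC^\lambda_n\to HC^\lambda_{n-2}$ to conclude $S\circ I=0$, which is what makes the pairing collapse.
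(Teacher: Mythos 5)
Your proposal is correct and follows essentially the same route as the paper: both use the Connes--Karoubi square to place $\ch_n(x)$ in the image of $I$, the periodicity $[\mathrm{ch}_{CC}^{n}(F)]=S[\mathrm{ch}_{CC}^{n-2}(F)]$ under the $n-1$-summability hypothesis, and exactness of the SBI-sequence to kill the pairing (the paper dualizes via $I\circ S=0$ on cohomology where you use $S\circ I=0$ on homology plus the adjunction for $S$ --- the same argument up to transposition). One small slip: at the node $HH_n\xrightarrow{I}HC_n^\lambda\xrightarrow{S}HC_{n-2}^\lambda$ exactness gives $\im I=\ker S$, not $\ker B$; this does not affect your conclusion, which only needs $S\circ I=0$.
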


\begin{proof}
We can assume that  $(\pi, \mathcal{H},F)$ is $n+1$-summable because if not the statement of the corollary holds trivially. Let $y\in K_n^{\rm alg}(\mathfrak{A})$. By Connes-Karoubi's theorem we have that 
$$\langle j_*y,[(\pi, \mathcal{H},F))\rangle =\ch_{CC}^n(F).\ch(j_*y)=\frac{1}{n!} \ch_{CC}^n(F). \left[ID_n(y)\right]=\frac{1}{n!} \left[I\ch_{CC}^n(F)\right].D_n(y).$$
If $F$ is $n-1$-summable bounded Fredholm module of parity $\#n$, then $I\ch_{CC}^n(F)=IS\ch_{CC}^{n-2}(F)=0$. Therefore the assumption of the corollary contradicts $n-1$-summability.
\end{proof}

\section{Properties of maps induced from $C^\alpha\hookrightarrow C^\beta$}

We are interested in the cyclic (co)homology groups of the algebra of Hölder continuous functions. In large degrees, much can be said using the inclusion $C^\infty\hookrightarrow C^\alpha$ and the Connes-Hochschild-Kostant-Rosenberg theorem. We study large degrees in Subsection \ref{lnadpakdnaknd}. In low degrees, less is known and we give results showing that $C^\infty\hookrightarrow C^\alpha$ provides little information. We study low degrees in Subsection \ref{knlkansdkoanda}.

\subsection{The map $C^\infty(X)\to C^\alpha(X)$ in large degrees}
\label{lnadpakdnaknd}

The cyclic (co)homology groups of $C^\infty(X)$ were computed by Connes \cite[Chapter III.$2.\alpha$]{connebook} using the Hochschild-Kostant-Rosenberg theorem and the SBI-sequence.

\begin{thm}[Connes-Hochschild-Kostant-Rosenberg theorem]
Let $X$ be a compact manifold. Let $\Omega^k(X)$ denote the space of smooth $k$-forms on $X$ with the exterior differential $\rd$. Let $\Omega_k(X)$ denote the space of $k$-currents and $Z_k(X)\subseteq \Omega_k(X)$ the space of closed $k$-currents. Then there are natural isomorphisms
$$HC_k^\lambda(C^\infty(X))\cong 
\begin{matrix} 
\Omega^k(X)/\rd \Omega^{k-1}(X)\\
\oplus\\
\oplus_{j=1}^\infty H^{k-2j}_{\rm dR}(X) 
\end{matrix}\quad
\mbox{and}
\quad HC^k_\lambda(C^\infty(X))\cong 
\begin{matrix} 
Z_k(X)\\
\oplus\\
\oplus_{j=1}^\infty H_{k+2j}^{\rm dR}(X) 
\end{matrix}$$
\end{thm}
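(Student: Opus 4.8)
The strategy is to compute the Hochschild theory first, then to identify Connes' boundary operator with the de Rham differential, and finally to feed this into the $SBI$-machinery. So, first I would record the continuous Hochschild--Kostant--Rosenberg theorem for $C^\infty(X)$: the antisymmetrization map
$$a_0\otimes a_1\otimes\cdots\otimes a_k\longmapsto \tfrac{1}{k!}\,a_0\,\rd a_1\wedge\cdots\wedge\rd a_k$$
induces a natural isomorphism $HH_k(C^\infty(X))\cong \Omega^k(X)$ under which the Hochschild boundary $b$ is carried to the zero map, and dually $HH^k(C^\infty(X))\cong \Omega_k(X)$. The proof of this is the technical heart of the matter: one builds a continuous $C^\infty(X)\tilde{\otimes} C^\infty(X)$-projective resolution of $C^\infty(X)$ -- concretely a Koszul-type resolution attached to the conormal bundle of the diagonal in $X\times X$ -- and identifies the resulting $\mathrm{Tor}$- and $\mathrm{Ext}$-groups with forms and currents. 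This is where the locally convex structure genuinely enters, as one uses nuclearity of $C^\infty(X)$ both to identify $C^\infty(X)\tilde{\otimes} C^\infty(X)$ with $C^\infty(X\times X)$ and to keep the relevant complexes exact after completing the tensor products; I would simply invoke \cite[Chapter III.$2.\alpha$]{connebook} for this step.

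Next I would pin down the mixed-complex structure. Transporting Connes' operator $B$ through the isomorphism $HH_*(C^\infty(X))\cong \Omega^*(X)$ yields, up to the universal constant reflected in the normalization $\tfrac{1}{n}B$ of Theorem \ref{ckadoandona}, the exterior derivative $\rd\colon \Omega^k(X)\to \Omega^{k+1}(X)$; dually, on currents it becomes the boundary $\partial$. This is a direct check on generators: applying $B$ to $a_0\otimes\cdots\otimes a_k$ and antisymmetrizing produces a multiple of $\rd a_0\wedge \rd a_1\wedge\cdots\wedge \rd a_k$. Hence the mixed complex governing the cyclic theory of $C^\infty(X)$ is quasi-isomorphic to $(\Omega^*(X),0,\rd)$, respectively to $(\Omega_*(X),0,\partial)$.

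Finally I would read off the cyclic groups. Since the Hochschild differential is null after the identification above, Connes' $(b,B)$-bicomplex collapses: in total degree $k$ its reduced total complex is $\bigoplus_{i\ge 0}\Omega^{k-2i}(X)$, and the only surviving differential, induced by $B=\rd$, carries the summand of index $i$ in degree $k$ into the summand of index $i-1$ in degree $k-1$. Computing the homology of this complex factor by factor gives $\Omega^k(X)/\rd\Omega^{k-1}(X)$ from the index-$0$ summand and a copy of de Rham cohomology $H^{k-2j}_{\rm dR}(X)$ from each summand of index $j\ge 1$; dualizing -- equivalently, running the same computation with currents and the boundary $\partial$ -- produces the stated description of $HC^k_\lambda(C^\infty(X))$ in terms of closed currents and de Rham homology. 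An essentially equivalent but more self-contained route is to induct on $k$ using the $SBI$-sequence of Theorem \ref{sbithm}, feeding in $HH_*(C^\infty(X))=\Omega^*(X)$ together with $B=\rd$ to force the long exact sequences to split into the asserted pieces. I expect the continuous Hochschild--Kostant--Rosenberg step to be the only genuine obstacle; once the mixed complex is identified, the remainder is bookkeeping with the $(b,B)$-bicomplex.
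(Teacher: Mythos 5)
Your sketch is correct and follows exactly the route the paper itself indicates, namely Connes' original argument: the continuous Hochschild--Kostant--Rosenberg isomorphism $HH_k(C^\infty(X))\cong\Omega^k(X)$ via a Koszul resolution of the diagonal, the identification of $B$ with the exterior derivative, and the resulting collapse of the $(b,B)$-bicomplex (equivalently, the SBI-sequence). The paper gives no independent proof but cites \cite[Chapter III.$2.\alpha$]{connebook}, which is precisely the argument you outline, including the dualization giving closed currents $Z_k(X)$ in the cohomological statement.
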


The SBI-sequence in the case of $C^\infty(X)$ can be described further; in terms of negative cyclic homology this was done in \cite[Chapter 5.1.12]{lodaybook}. Using the Connes-Hochschild-Kostant-Rosenberg theorem we can deduce a relation between classical de Rham (co)homology groups and large degree cyclic (co)homology groups of the algebra of Hölder continuous functions.

\begin{thm}
Let $X$ be a compact manifold, $\alpha\in (0,1]$, and $k>n/\alpha$. 
\begin{itemize}
\item The induced map 
$$HC_k^\lambda(C^\infty(X))\to HC_k^\lambda(C^\alpha(X)),$$
is injective.
\item The induced map 
$$HC^k_\lambda(C^\alpha(X))\to HC^k_\lambda(C^\infty(X)),$$
is surjective.
\end{itemize}
\end{thm}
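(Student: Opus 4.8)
The plan is to factor the inclusion $C^\infty(X)\hookrightarrow C^\alpha(X)$ through an intermediate Hölder algebra whose cyclic homology in degree $k$ is already understood, and to use the stabilization property of the Connes--Chern character to detect the relevant classes. More precisely, I would first invoke the identification from Proposition~\ref{blknaolknad}: for a compact Riemannian manifold with its geodesic distance, $h^\alpha(X)=\overline{C^\infty(X)}^{C^\alpha(X)}$ and $h^\alpha(X)^{**}=C^\alpha(X)$. The strategy is then to produce enough cocycles on $C^\alpha(X)$, or equivalently enough $K$-homology classes, that pair non-trivially with the image of $HC_k^\lambda(C^\infty(X))$ under the map in question. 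The natural source of such cocycles is the family of $(m,\infty)$-summable Fredholm modules built from the Dirac-type operators on $X$: by the summability hypothesis $k>n/\alpha$, the commutators $[F,a]$ for $a\in C^\alpha(X)$ lie in $\mathcal{L}^{k+1}$ (this is where the Hölder regularity enters, via the operator estimates of the type used later in the paper to produce $(p,\infty)$-summable modules in Section~\ref{examoedonaodhol}), so that $\mathrm{ch}_{CC}^k(F)$ is a well-defined cyclic $k$-cocycle on $C^\alpha(X)$.

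Granting that, the first bullet follows by a dimension/pairing argument. On $C^\infty(X)$ the classes $\mathrm{ch}_{CC}^k(F)$ for $F$ ranging over (twisted) signature-type operators span, via the Connes--Hochschild--Kostant--Rosenberg theorem together with the $S$-operator, a subspace of $HC^k_\lambda(C^\infty(X))$ that separates points of $HC_k^\lambda(C^\infty(X))$ — this is exactly the content of Connes' computation and the non-degeneracy of the index pairing on a closed manifold. Since each such cocycle extends to $C^\alpha(X)$ when $k>n/\alpha$, and since the pairing $\langle \mathrm{ch}_{CC}^k(F), \cdot\rangle$ on $HC_k^\lambda(C^\infty(X))$ factors through the image in $HC_k^\lambda(C^\alpha(X))$, any class in the kernel of $HC_k^\lambda(C^\infty(X))\to HC_k^\lambda(C^\alpha(X))$ pairs trivially with all these extended cocycles, hence is already zero in $HC_k^\lambda(C^\infty(X))$; this gives injectivity. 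The second bullet is the dual statement: the transpose of an injective map between these (co)homology groups, once one knows the pairing between homology and cohomology is perfect in the relevant degrees (again via CHKR and finite-dimensionality of the de~Rham pieces, the $\Omega^k/\rd\Omega^{k-1}$ summand being handled by the current pairing), is surjective. Concretely one shows the composite $HC^k_\lambda(C^\alpha(X))\to HC^k_\lambda(C^\infty(X))$ hits each $\mathrm{ch}_{CC}^k(F)|_{C^\infty}$ because that cocycle is the restriction of the already-constructed cocycle on $C^\alpha(X)$, and these restrictions span $HC^k_\lambda(C^\infty(X))$.

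The main obstacle I anticipate is the summability input: showing that for $a\in C^\alpha(X)$ and $F$ the phase of a first-order elliptic operator, the commutator $[F,a]$ is in the Schatten class $\mathcal{L}^{k+1}$ precisely under $k>n/\alpha$. Morally this is the statement that a Hölder function of exponent $\alpha$ ``uses up'' a fraction $\alpha$ of a derivative, so the $\alpha$-fold iterate of the order-$(-1)$ parametrix gains a genuine derivative; quantitatively it requires either a commutator estimate in Besov/Sobolev scales or the interpolation-type bounds for singular integral operators against Hölder symbols. A clean way to package this is to note that $[F,a]$ for $a\in\mathrm{Lip}(X)$ lies in $\mathcal{L}^{(n,\infty)}$ (the Lipschitz case, $\alpha=1$), and then interpolate: $C^\alpha=[C(X),\mathrm{Lip}(X)]_\alpha$ in a suitable sense gives $[F,a]\in \mathcal{L}^{(n/\alpha,\infty)}\subseteq \mathcal{L}^{k+1}$ once $k+1>n/\alpha$, i.e. $k\ge n/\alpha$; the strict inequality $k>n/\alpha$ then comfortably suffices and also makes $\mathrm{ch}_{CC}^k$ honestly trace-class without needing the Dixmier-trace formalism. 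The rest — well-definedness of $\mathrm{ch}_{CC}^k(F)$ as a cyclic cocycle, compatibility with $S$, and the CHKR-based spanning statement — is then bookkeeping built on Theorems~\ref{conoancoand} and the Connes--Hochschild--Kostant--Rosenberg theorem quoted above.
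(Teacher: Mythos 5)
Your proposal follows essentially the same route as the paper: the paper likewise proves the cohomological surjectivity first, by extending the Connes--Chern cocycles of zeroth-order pseudodifferential Fredholm modules to $C^\alpha(X)$ in the summability range $k>n/\alpha$ (outsourcing the Schatten-class commutator estimate you sketch to \cite[Theorem 3.5]{goffeven}) and noting via the Chern character isomorphism $K_*(X)\otimes\R\cong H_*^{\rm dR}(X)$ and Poincar\'e duality that these span $HC^k_\lambda(C^\infty(X))$, and then deduces injectivity in homology from the perfectness of the degree-$k$ pairing given by the Connes--Hochschild--Kostant--Rosenberg theorem. Your argument is correct in outline and matches the paper's.
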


\begin{proof}
By the Connes-Hochschild-Kostant-Rosenberg theorem, the pairing $HC_k^\lambda(C^\alpha(X))\times HC_\lambda^k(C^\alpha(X))\to \C$ is perfect for $k>n$ so the injectivity of $HC^\lambda_k(C^\infty(X))\to HC^\lambda_k(C^\alpha(X))$ follows from the surjectivity of $HC_\lambda^k(C^\alpha(X))\to HC_\lambda^k(C^\infty(X))$. Since the Chern character induces an isomorphism $\ch_*:K_*(X)\otimes \R\to H_*^{\rm dR}(X)$ and Poincaré duality $K^*(T^*X)\to K_*(X)$ is an isomorphism whose range at the level of cycles consists of bounded Fredholm modules defined from zeroth order pseudo-differential operators, the theorem now follows from \cite[Theorem 3.5]{goffeven}, see also \cite{goffodd}.
\end{proof}

\subsection{Approximating the diagonal and the inclusion $C^\beta\hookrightarrow C^\alpha$ if $\beta>2\alpha$}
\label{knlkansdkoanda}

\begin{thm}
\label{approxomtienri}
Let $X$ be a compact metric space. Then the space of Hochschild cocycles satisfies
$$Z_1(C^\beta(X))\subseteq \overline{B_1(C^\alpha(X))}^{C^\alpha(X^{2})},$$
for $\beta>\alpha$. If $X$ is a compact Riemannian manifold and $\beta>2\alpha$, the same is true with $C^\alpha(X)^{\otimes_{\rm min} 2}$ replacing $C^\alpha(X^{2})$.
\end{thm}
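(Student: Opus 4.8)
The plan is to reduce everything to an explicit statement about $1$-cocycles and then to produce an explicit approximating sequence by mollification against the metric. Recall that a Hochschild $1$-cycle on $C^\beta(X)$ is an element $\omega = \sum_i a_i \otimes b_i \in C^\beta(X)^{\tilde\otimes 2}$ with $b_1(\omega) = \sum_i a_i b_i - b_i a_i = 0$; since $C^\beta(X)$ is commutative, the boundary condition is automatic, so $Z_1(C^\beta(X))$ is just all of $C^\beta(X)\tilde\otimes C^\beta(X)$, and the content is entirely about approximation. The first step is therefore purely linear-algebraic: reduce to approximating a single elementary tensor $a\otimes b$ with $a,b\in C^\beta(X)$, viewed as the function $(x,y)\mapsto a(x)b(y)$ on $X^2$, by Hochschild boundaries $b_1(c) $ with $c\in C^\alpha(X)^{\tilde\otimes 3}$, in the norm of $C^\alpha(X^2)$ (resp.\ $C^\alpha(X)^{\otimes_{\min}2}$). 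Note $b_1(a_0\otimes a_1\otimes a_2)(x,y) = a_0(x)a_1(x)a_2(y) - a_0(x)a_1(y)a_2(y) + a_2(x)a_0(y)a_1(y)$, so I should exhibit suitable $2$-chains whose boundaries converge.

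The key analytic idea is diagonal approximation. Choose a sequence of ``approximate identities for the diagonal'': functions $\Phi_n\in C^\infty(X\times X)$ (for $X$ a manifold; for a general compact metric space, Lipschitz functions obtained by composing a bump with $\mathrm{d}$) that equal $1$ near $\Delta_X$, are supported in a shrinking neighbourhood $\{\mathrm{d}(x,y)<\varepsilon_n\}$, and satisfy the crucial gradient estimate $|\Phi_n|_{C^\gamma(X^2)}\lesssim \varepsilon_n^{-\gamma}$. Writing $a(x)b(y) = a(x)b(y)\Phi_n(x,y) + a(x)b(y)(1-\Phi_n(x,y))$, the first piece is supported near the diagonal where $a(x)b(y)$ is close to $a(x)b(x)$, and one rewrites it modulo a controllable error as $b_1$ of an explicit $2$-chain built from $a$, $b$ and a partition of unity subordinate to the tubular neighbourhood (this is where one uses that $\Delta_X$ has a tubular neighbourhood, hence the restriction to Riemannian manifolds in the second assertion). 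The off-diagonal piece $a(x)b(y)(1-\Phi_n(x,y))$ lies in $C^\alpha(X)\otimes_{\rm alg} C^\alpha(X)$ after a further decomposition using a partition of unity away from the diagonal, and one checks it also represents (modulo small error) a boundary. The estimate that makes this work is the interpolation bound: a function supported in $\{\mathrm{d}(x,y)<\varepsilon_n\}$ that is bounded by $M$ and has $C^\beta$-seminorm $\lesssim N$ has $C^\alpha$-seminorm $\lesssim M^{1-\alpha/\beta} N^{\alpha/\beta}$ on that region, which one pairs with $N\sim \varepsilon_n^{-1}$ (or $\varepsilon_n^{-\beta}$, from $\Phi_n$) and $M\sim \varepsilon_n^{\beta}$ (from $a(x)(b(y)-b(x))$ being $O(\mathrm{d}^\beta)$ there) to obtain an error of order $\varepsilon_n^{\beta - \alpha}\to 0$ when $\beta>\alpha$; the loss $\beta>2\alpha$ appears precisely when one additionally needs the multiplication $C^\alpha(X)\times C^\alpha(X)\to C^\alpha(X^2)$ to factor through the smaller tensor norm $\otimes_{\min}$, which forces a second interpolation step and a second factor of $\varepsilon_n^{-\alpha}$ against $\varepsilon_n^{\beta}$.

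The main obstacle I anticipate is bookkeeping the error terms so that they genuinely lie in, and are small in, the correct completed tensor product — in particular, for the $\otimes_{\min}$ statement one cannot work with the full projective completion $C^\alpha(X)^{\tilde\otimes 2}$ and must keep every intermediate chain inside finitely-generated pieces $C^\alpha(X)\otimes_{\rm alg}C^\alpha(X)$ with uniformly controlled $\otimes_{\min}$-norm, using the operator-algebra structure from Proposition \ref{adknaodkna} to estimate these norms. A secondary technical point is the passage from the model statement ``$a(x)b(x)$'' on the diagonal to an honest $2$-chain: one should organize the tubular-neighbourhood identification $\nu:N\cong N^*X$ from Proposition \ref{thickeningprop} together with a slicing of $N^*X$ so that the telescoping sum expressing $a(x)b(y)\Phi_n - (\text{diagonal part})$ as $b_1$ of something visibly converges; this is routine but lengthy. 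Once the single-tensor case is done with a uniform bound, a standard argument upgrades it to all of $Z_1(C^\beta(X))$ by density of algebraic tensors and continuity of $b_1$.
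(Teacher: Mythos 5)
Your analytic core coincides with the paper's: cutoffs $\Delta_j(x,y)=\chi(j\,\rd(x,y))$ supported in $\{\rd(x,y)<1/j\}$ with Lipschitz constant $O(j)$, the interpolation bound $|g|_{C^\alpha}\lesssim \|g\|_{C^0}^{1-\alpha/\beta}|g|_{C^\beta}^{\alpha/\beta}$ on that set, and the resulting decay rate (the paper gets $j^{-\gamma}$ with $\gamma=\min(1-\alpha/\beta,\beta-\alpha)$, of the same flavour as your $\varepsilon_n^{\beta-\alpha}$). The gap is in the reduction to these estimates. The paper uses a single explicit degree-raising homotopy $T_j(x):=(\Delta_j\otimes 1)\cdot(1\otimes x)$, mapping $C_1(C^\beta(X))$ into $C^\alpha(X^3)$, and computes $bT_j(x)$ in closed form: it differs from $x$ only by terms of the shape $\Delta_j\cdot(1\otimes f-f\otimes 1)$ with $f\in C^\beta(X)$ (applied to the $f_{l,i}$ and their products), so the whole theorem collapses to the one estimate $\|\Delta_j(1\otimes f-f\otimes 1)\|_{C^\alpha(X\times X)}\lesssim \|f\|_{C^\beta(X)}j^{-\gamma}$. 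You instead split $a\otimes b$ into near- and off-diagonal pieces and assert each is approximately a boundary, but your treatment of the near-diagonal piece rests on a tubular neighbourhood of $\Delta_X$ and an unspecified telescoping over a slicing of $N^*X$. That route is unavailable for the first assertion, which is stated for an arbitrary compact metric space, and it misdiagnoses the second: the Riemannian and $\beta>2\alpha$ hypotheses are there for the minimal-tensor-norm estimate (via the embedding $i_\alpha$ of Proposition \ref{adknaodkna}), not for any geometry of the diagonal. In fact no tubular neighbourhood is needed for the near-diagonal piece: $\Delta_j\cdot(ab\otimes 1)=b(ab\otimes 1\otimes 1)-(1-\Delta_j)\cdot(ab\otimes 1)$, and the last term is supported at distance $\geq 1/j$ from the diagonal.

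The off-diagonal piece also needs an actual argument rather than a gesture. Elementary tensors $u\otimes v$ with $\rd(\supp u,\supp v)\geq\epsilon$ are exact boundaries, since $b(u\otimes w\otimes v)=-u\otimes v$ for any $w$ equal to $1$ on $\supp v$ and $0$ on $\supp u$ (such $w$ exists in $C^\alpha$ with seminorm $O(\epsilon^{-\alpha})$); but $(1-\Delta_j)\cdot(a\otimes b)$ is not itself a finite sum of such tensors, so your partition-of-unity decomposition reintroduces near-diagonal terms for every pair of patches within distance $1/j$ of each other, and you must verify that the resulting family of $2$-chains, whose cardinality and norms both grow with $j$, assembles into an element of the relevant completed tensor product. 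None of this is unfixable, but it is precisely the bookkeeping you defer as ``routine but lengthy,'' and it is what the homotopy $T_j$ is designed to avoid. I would reorganize the argument around such a homotopy operator and isolate the single estimate on $\Delta_j(1\otimes f-f\otimes 1)$, treating the projective and minimal tensor norms as two instances of that estimate.
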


To prove this theorem, we need a lemma. We note that for the minimal tensor product, the condition $\beta>2\alpha$ can at best be relaxed to $\beta\geq 2\alpha$ in general as per the remark after Proposition \ref{lkandapdn}.  

\begin{lem}
Let $X$ be a compact metric space. Then there is a sequence $(\Delta_j)_{j\in \N}\subseteq \mathrm{Lip}(X)$ approximating the diagonal in the sense that 
\begin{enumerate}
\item[i)] $\Delta_j(x,x)=1$ for $x\in X$;
\item[ii)] $\|\Delta_j\|_{\mathrm{Lip}(X\times X)}=O(j)$;
\item[iii)] $\mathrm{supp}(\Delta_j)\subseteq \{(x,y)\in X\times X: \rd(x,y)<\frac{1}{j}\}$.
\end{enumerate}
If $X$ is a Riemannian manifold, we can take $(\Delta_j)_{j\in \N}\subseteq \mathrm{Lip}(X)$  such that additionally 
\begin{enumerate}
\item[iv)] $\|(\delta_1\otimes \delta_1)(\Delta_j)\|_{C_b(\Omega_X\times \Omega_X)}=O(j^2)$.
\end{enumerate}
\end{lem}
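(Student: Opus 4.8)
The plan is to construct the diagonal approximations $\Delta_j$ by hand from a partition of unity on the diagonal $\Delta_X$, rescaled so the supports shrink at rate $1/j$ while controlling the Lipschitz constants. First I would fix, for each $j$, a function $\phi_j \colon [0,\infty) \to [0,1]$ which is $1$ on $[0,\frac{1}{2j}]$, supported in $[0,\frac{1}{j})$, and $\frac{1}{2j}$-Lipschitz times a constant, i.e. with $\|\phi_j'\|_\infty = O(j)$. The natural candidate for the general compact metric space is then $\Delta_j(x,y) := \phi_j(\rd(x,y))$. Property (i) is immediate from $\phi_j(0)=1$, and property (iii) is immediate from the support condition on $\phi_j$. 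For property (ii), since $(x,y)\mapsto \rd(x,y)$ is $1$-Lipschitz on $X\times X$ (with the sum metric, say), the chain rule bound for Lipschitz functions gives $|\Delta_j|_{\mathrm{Lip}(X\times X)} \le \|\phi_j'\|_\infty \cdot 1 = O(j)$, and $\|\Delta_j\|_{C(X\times X)}\le 1$, so $\|\Delta_j\|_{\mathrm{Lip}(X\times X)} = O(j)$.

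The second half, for a Riemannian manifold, requires the extra estimate (iv) on the bi-derivative $(\delta_1\otimes\delta_1)(\Delta_j)$, i.e. on second-order difference quotients in the two variables simultaneously. Here the function $\phi_j(\rd(x,y))$ as above is \emph{not} obviously good enough: $\rd$ is only Lipschitz, not $C^2$, near the diagonal, so its mixed difference quotients need not be $O(1)$. Instead I would work in geodesic normal coordinates. Using a tubular neighbourhood $N$ of $\Delta_X$ in $X\times X$ as in Proposition \ref{thickeningprop} and a fixed finite atlas, one can write, on each chart, $\Delta_j(x,y) = \psi\!\left(j\,(x-y)\right)\chi(x)\chi(y)$ where $\psi\in C_c^\infty(\R^n)$ equals $1$ near $0$, $\chi$ is a fixed smooth cutoff subordinate to the chart, and $x-y$ is the coordinate difference; one then patches these together with a partition of unity on $X$ pulled back to $X\times X$. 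For such a function, $\delta_1$ in the first variable differentiates once in $x$ and divides by $\rd(x,\cdot)\sim$ (a comparable quantity), contributing a factor $j$; applying $\delta_1$ in the second variable similarly contributes another factor $j$ when it hits the $\psi(j(x-y))$ factor, and only a bounded factor when it hits the smooth cutoffs. Since $\psi\in C^\infty_c$ has all derivatives bounded and is being evaluated at $j(x-y)$ which ranges over a fixed compact set, the worst term is $O(j^2)$, giving (iv). The smooth patching contributes only lower-order-in-$j$ corrections.

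The main obstacle is exactly reconciling the two descriptions: the global construction $\phi_j(\rd(x,y))$ is the only one available on a general compact metric space and is the cleanest for (i)--(iii), but for (iv) one genuinely needs the smooth structure and hence the coordinate description $\psi(j(x-y))$, and one must check that the coordinate-patching cutoffs $\chi$ do not spoil (i) (they are $\equiv 1$ on a neighbourhood of the diagonal inside each chart, so summing against a partition of unity on $X$ restores $\Delta_j|_{\Delta_X}=1$) and do not worsen the $j$-powers in (ii) and (iv) (they are $j$-independent, so they only contribute $O(1)$ factors). A secondary technical point is that $\delta_1$ on a manifold is defined via the geodesic distance $\rd_g$ while the coordinate estimates naturally produce the Euclidean difference $|x-y|$; these are bi-Lipschitz comparable on the relevant neighbourhood with $j$-independent constants, using compactness of $X$, which is all that is needed. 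Finally one should note that $\Delta_j$ as constructed lies in $\mathrm{Lip}(X) = C^{0,1}(X\times X)$ — indeed it is even piecewise smooth — so all the asserted norms are finite for each fixed $j$, and the content is purely in the asymptotics in $j$.
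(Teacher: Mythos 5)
Your construction for (i)--(iii) is exactly the paper's: the paper takes a single positive decreasing $\chi\in C^\infty_c[0,1)$ with $\chi\equiv 1$ near $0$ and sets $\Delta_j(x,y)=\chi(j\,\rd(x,y))$, which is your $\phi_j(\rd(x,y))$. Where you diverge is (iv): the paper keeps the same global formula and verifies (iv) by the ``short computation,'' whereas you abandon it for a chart-by-chart construction $\psi(j(x-y))$ patched by a partition of unity. Your stated reason for switching --- that $\rd$ is not $C^2$ at the diagonal, so mixed difference quotients of $\phi_j(\rd(x,y))$ need not be controlled --- is a legitimate worry in the abstract, but it is already neutralized by the very cutoff you chose: since $\phi_j\equiv 1$ on $[0,\tfrac{1}{2j}]$, the function $\phi_j(\rd)$ is constant near the diagonal (the only non-smooth locus of $\rd$ inside its support), and on $\operatorname{supp}\phi_j'(\rd)$ one has $\rd\gtrsim 1/j$. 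Writing $\partial_x\partial_y\bigl(\phi_j(\rd)\bigr)=\phi_j''(\rd)\,\partial_x\rd\,\partial_y\rd+\phi_j'(\rd)\,\partial_x\partial_y\rd$, the first term is $O(j^2)\cdot O(1)$ and the second is $O(j)\cdot O(1/\rd)=O(j)\cdot O(j)$, since the mixed second derivatives of the geodesic distance blow up only like $1/\rd$ near the diagonal; combined with the elementary case analysis (when either pair of points is separated by more than $1/j$, bound the numerator by the sup norm or the Lipschitz constant), this gives (iv) for the original $\Delta_j$ directly. So the paper's route is shorter and avoids the extra bookkeeping your patching requires (re-verifying (i)--(iii) for the patched function, uniform bi-Lipschitz comparison of coordinate and geodesic distances, behaviour at chart boundaries), all of which you do correctly flag and which can be carried out; your approach is valid but buys nothing here except independence from the regularity theory of the distance function.
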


\begin{proof}
Take a positive, decreasing function $\chi\in C^\infty_c[0,1)$ with $\chi(t)=1$ in a neighbourhood of $t=0$. It is a short computation to verify that 
$$\Delta_j(x,y):=\chi(j\rd(x,y)),$$
satisfies the stated properties.
\end{proof}

\begin{proof}[Proof of Theorem \ref{approxomtienri}]
Let $s$ denote the map $C_k(\mathfrak{A})\to C_{k+1}(\mathfrak{A})$, $s(x):=1\otimes x$. Define the linear maps 
$$T_j:C_1(C^\beta(X))\to C^\alpha(X^{3}), \quad T_j(x):=(\Delta_j\otimes 1)\cdot s(x), \quad j\in \N.$$
For $x=\sum_l f_{l,0}\otimes f_{l,1}$, we compute that 
$$bT_j(x)=x+\sum_l \Delta_j\cdot (f_{l,0}f_{l,1}\otimes 1-1\otimes f_{l,0}f_{l,1}).$$
As such, the first part of the theorem is proved upon showing that $\Delta_j(1\otimes f-f\otimes 1)\to 0$ in $C^\alpha(X\times X)$ uniformly in $f\in C^\beta(X)$ if $\beta>\alpha$.  The second statement in the minimal tensor product is proven upon showing that $\Delta_j(1\otimes f-f\otimes 1)\to 0$ in $C^\alpha(X)\otimes_{\rm min} C^\alpha(X)$ uniformly in $f\in C^\beta(X)$ if $\beta>2\alpha$. The second statement is proven along similar lines as the first, using the injectivity of the minimal tensor product and the embedding $i_\alpha$ from Proposition \ref{adknaodkna}, and is therefore omitted.

Write $U_j=\{(x,y)\in X\times X: \rd(x,y)<\frac{1}{j}\}$. We have that $\|\cdot\|_{C^\alpha}=\|\cdot\|_{C^0}+|\cdot|_{C^\alpha}$. We estimate that 
$$\|\Delta_j(1\otimes f-f\otimes 1)\|_{C^0(X\times X)}=\|\Delta_j(1\otimes f-f\otimes 1)\|_{C^0(U_j)}\leq \|1\otimes f-f\otimes 1\|_{C^0(U_j)}\lesssim |f|_{C^\beta}j^{-\beta}.$$
Note that 
\begin{align*}
|1\otimes f-f\otimes 1|_{C^\alpha(U_j)}\leq &|1\otimes f-f\otimes 1|_{C^0(U_j)}^{1-\frac{\alpha}{\beta}}|1\otimes f-f\otimes 1|_{C^\beta(U_j)}\leq j^{\frac{\alpha}{\beta}-1}\|f\|_{C^\beta(X)},
\end{align*}
and by a similar trick 
\begin{align*}
|\Delta_j|_{C^\alpha(U_j)}\lesssim& j^{\alpha}.
\end{align*}
We can therefore estimate that 
\begin{align*}
|\Delta_j(1\otimes f-f\otimes 1)&|_{C^\alpha(X\times X)}=|\Delta_j(1\otimes f-f\otimes 1)|_{C^\alpha(U_j)}\leq\\
\leq & |\Delta_j|_{C^\alpha(U_j)}\|1\otimes f-f\otimes 1\|_{C^0(U_j)}+\|\Delta_j\|_{C^0(U_j)}|1\otimes f-f\otimes 1)|_{C^\alpha(U_j)}\lesssim \\
\lesssim&\|f\|_{C^\beta(X)} j^{-\gamma},
\end{align*}
for 
$$\gamma=\min\left(1-\frac{\alpha}{\beta},\beta-\alpha\right).$$ 
We have that $\gamma>0$ if $\beta>\alpha$. We conclude that 
$$\|\Delta_j(1\otimes f-f\otimes 1)\|_{C^\alpha(X\times X)}\lesssim \|f\|_{C^\beta(X)}j^{-\gamma},$$
and the theorem follows.
\end{proof}

\begin{remark}
\label{contindiandinad}
If the linear map $T_j$ from the proof of Theorem \ref{approxomtienri} would be i) continuous $C_1(C^\beta(X))\to C_{2}(C^\alpha(X))$ in the projective norm and ii) $\Delta_j(1\otimes f-f\otimes 1)\to 0$ in $C^\alpha(X)\tilde{\otimes} C^\alpha(X)$ uniformly in $f\in C^\beta(X)$ for $\beta>2\alpha$, then the map $HH_1^{\rm red}(C^\beta(X))\to HH_1^{\rm red}(C^\alpha(X))$ is the zero map as soon as $\beta>2\alpha$. The argument dualizes to show the vanishing also of $HH^1_{\rm red}(C^\alpha(X))\to HH^1_{\rm red}(C^\beta(X))$. In particular, we could conclude that $HC_1^{\rm red}(C^\beta(X))\to HC_1^{\rm red}(C^\alpha(X))$ is the zero map as soon as $\beta>2\alpha$. The next result provides an example when the map $HC_1^{\rm red}(C^\beta(X))\to HC_1^{\rm red}(C^\alpha(X))$ is non-zero in the range $2\alpha>\beta>\alpha$.
\end{remark}

\begin{prop}
\label{lkandapdn}
Let $\alpha\in (1/2,1]$. Then it holds that $[z\otimes z^{-1}]\neq 0$ in $HC_1^{\rm red}(C^\alpha(S^1))$. In fact, there is a cyclic $1$-cocycle $c_1$ continuous on $C^\alpha(S^1)$ for $\alpha>1/2$ with 
$$[c_1].[z\otimes z^{-1}]\neq 0.$$
\end{prop}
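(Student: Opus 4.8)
The plan is to realize $[z\otimes z^{-1}]$ as a non-trivial pairing against an explicit cyclic $1$-cocycle coming from a Fredholm module on $S^1$. The natural candidate is the Hilbert transform Fredholm module: take $\mathcal{H}=L^2(S^1)$, let $F=2P-1$ where $P$ is the Szegő projection onto the Hardy space spanned by $\{z^k : k\geq 0\}$, and let $C^\alpha(S^1)$ act by multiplication. The commutator $[F,f]$ has integral kernel essentially $\frac{f(z)-f(w)}{z-w}$ up to the Cauchy kernel, which is exactly the refined difference quotient $\delta_{S^1}(f)$ appearing in the discussion after Proposition~\ref{thickeningprop}. The key analytic input is a summability estimate: for $f\in C^\alpha(S^1)$ with $\alpha>1/2$, the commutator $[F,f]$ lies in $\mathcal{L}^2(\mathcal{H})$ (Peller-type / Besov-space criterion: $C^\alpha(S^1)\subseteq B^{1/2}_{2,\infty}$-type regularity suffices for Hilbert--Schmidt commutators once $\alpha>1/2$, and in fact one gets a clean bound in terms of $|f|_{C^\alpha}$ via the Fourier coefficient decay $|\hat f(k)|\lesssim \|f\|_{C^\alpha}|k|^{-\alpha}$ and $\sum_k |k|\,|\hat f(k)|^2<\infty$). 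Thus $(\mathrm{mult},L^2(S^1),F)$ is a $2$-summable odd Fredholm module on $C^\alpha(S^1)$ for every $\alpha>1/2$, and Theorem~\ref{conoancoand} (with $n=1$) produces the cyclic $1$-cocycle
$$c_1(a_0,a_1)=c_1'\,\mathrm{Tr}\!\left(F[F,a_0][F,a_1]\right),\qquad c_1'=\sqrt{2i},$$
which is continuous on $C^\alpha(S^1)$ for $\alpha>1/2$ since each commutator is Hilbert--Schmidt with norm controlled by $\|\cdot\|_{C^\alpha(S^1)}$.

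Next I would compute the pairing $c_1(z,z^{-1})$. This is the standard winding-number computation: for $u$ a unitary, $\frac{1}{2\pi i}\,c_1(u,u^{-1})$ (up to the normalization constant) equals the index of $PuP$ acting on the Hardy space, which for $u(z)=z$ equals $-1$ (or $+1$, depending on orientation conventions). Concretely, $[F,z]$ and $[F,z^{-1}]$ are rank-one-ish finite-rank operators built from the single boundary mode, and $\mathrm{Tr}(F[F,z][F,z^{-1}])$ evaluates to a nonzero explicit integer multiple of $2$; hence $c_1(z,z^{-1})\neq 0$. Since $c_1$ is a cyclic cocycle, it descends to a linear functional on $HC_1^\lambda(C^\alpha(S^1))$, and because $b$-coboundaries of continuous cochains pair trivially with cyclic homology classes, $c_1$ in fact pairs with the reduced group $HC_1^{\rm red}(C^\alpha(S^1))$. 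The nonvanishing $[c_1].[z\otimes z^{-1}]\neq 0$ then forces $[z\otimes z^{-1}]\neq 0$ in $HC_1^{\rm red}(C^\alpha(S^1))$, which is the assertion.

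The main obstacle is the summability estimate showing $[F,f]\in\mathcal{L}^2$ with a bound by $\|f\|_{C^\alpha(S^1)}$ precisely in the range $\alpha>1/2$, and checking that $1/2$ is the sharp threshold for this argument (consistent with the remark after the proposition that $\beta>2\alpha$, i.e.\ $\alpha<1/2$ for $\beta\le 1$, is where the map $HC_1^{\rm red}$ dies). For the Hilbert--Schmidt norm one has the exact formula $\|[F,M_f]\|_{\mathcal{L}^2}^2 \sim \sum_{k\geq 1} k\,(|\hat f(k)|^2+|\hat f(-k)|^2)$ (the Hardy-space matrix coefficients of $[P,M_f]$ between modes $m\geq 0$ and $n<0$ are $\hat f(m-n)$, and summing $|\hat f(m-n)|^2$ over $m\geq 0>n$ with $m-n=k$ gives multiplicity $k$). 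Combining this with $|\hat f(k)|\lesssim |k|^{-\alpha}|f|_{C^\alpha}$ yields convergence exactly when $\sum k\cdot k^{-2\alpha}<\infty$, i.e.\ $\alpha>1/2$, and gives the needed continuity bound $\|[F,M_f]\|_{\mathcal{L}^2}\lesssim \|f\|_{C^\alpha(S^1)}$. Everything else — that $F$ is a strict odd Fredholm module, the cocycle identity, the index computation — is classical and can be cited from \cite[Chapter IV.$1.\beta$]{connebook}.
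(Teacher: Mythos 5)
Your overall strategy is exactly the paper's: the paper also takes $F=2P-1$ (the phase of the Dirac operator on $S^1$), invokes $2$-summability of $(\mathrm{mult},L^2(S^1),F)$ over $C^\alpha(S^1)$ for $\alpha>1/2$ to get the continuous cyclic $1$-cocycle $c(a,b)=\mathrm{Tr}(F[F,a][F,b])=\frac{1}{2\pi}\int_{S^1}a\,\mathrm{d}b$, and evaluates it on $z\otimes z^{-1}$ by the winding-number/index computation (the paper gets $-1$). So the architecture, the index computation, and the reduction to the reduced group are all fine.

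There is, however, a genuine error in the one step you yourself single out as the crux. You claim that combining the Hilbert--Schmidt formula $\|[F,M_f]\|_{\mathcal{L}^2}^2\sim\sum_{k\geq 1}k\bigl(|\hat f(k)|^2+|\hat f(-k)|^2\bigr)$ with the pointwise decay $|\hat f(k)|\lesssim |k|^{-\alpha}|f|_{C^\alpha}$ ``yields convergence exactly when $\sum_k k\cdot k^{-2\alpha}<\infty$, i.e.\ $\alpha>1/2$.'' But $\sum_{k\geq 1}k^{1-2\alpha}$ converges if and only if $\alpha>1$, so this route proves nothing in the range $(1/2,1]$ that the proposition concerns; pointwise Fourier decay is simply too lossy (it cannot see the difference between $C^\alpha$ and, say, a single lacunary mode). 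The correct argument --- the Besov/Peller criterion you gesture at but do not execute --- goes through the $L^2$ modulus of continuity: by Parseval,
\begin{equation*}
\sum_{k}|\hat f(k)|^2\,|\mathrm{e}^{ikh}-1|^2=\|f(\cdot+h)-f\|_{L^2(S^1)}^2\leq |f|_{C^\alpha}^2\,|h|^{2\alpha},
\end{equation*}
and choosing $h\sim 2^{-j}$ so that $|\mathrm{e}^{ikh}-1|\gtrsim 1$ on the dyadic block $2^j\leq |k|<2^{j+1}$ gives $\sum_{2^j\leq |k|<2^{j+1}}|\hat f(k)|^2\lesssim 2^{-2j\alpha}|f|_{C^\alpha}^2$, whence $\sum_k |k|\,|\hat f(k)|^2\lesssim \sum_j 2^{j(1-2\alpha)}|f|_{C^\alpha}^2<\infty$ precisely for $\alpha>1/2$ (this is the statement $C^\alpha\subseteq H^{1/2}$ for $\alpha>1/2$; cf.\ \cite{zygbook,pellerbook}). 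With that substitution your proof is complete and coincides with the paper's, which simply cites \cite[Chapter III.$2.\alpha$]{connebook} and \cite{goffodd} for the continuity of $c_1$ on $C^\alpha(S^1)$.
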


\begin{proof}
Let $F$ denote the phase of the Dirac operator on $S^1$. The cyclic $1$-cocycle 
$$c(a,b):=\frac{1}{2\pi}\int_{S^1} a\mathrm{d}b=\mathrm{Tr}(F[F,a][F,b]),$$
extends by continuity to $C^\alpha(S^1)$ for $\alpha>1/2$. For details, see \cite[Chapter III.$2.\alpha$]{connebook} or \cite{goffodd}. A direct computation shows that 
$$[c_1].[z\otimes z^{-1}]=-1.$$
\end{proof}

The formula $c(a,b)=\mathrm{Tr}(F[F,a][F,b])$ shows that $c_1$ is continuous in the minimal tensor product for $C^\alpha(S^1)$ for $\alpha>1/2$. In particular, the inclusion $Z_1(C^\beta(S^1))\subseteq \overline{B_1(C^\alpha(S^1))}^{C^\alpha(S^1)\otimes^{\rm min}C^\alpha(S^1)}$ fails for $\alpha<\beta<2\alpha$ when $\alpha>1/2$. In light of Remark \ref{contindiandinad} we make the following conjecture. We find the problem interesting seeing that Proposition \ref{lkandapdn} would then demonstrate a cutoff behaviour.

\begin{conj}
Let $\alpha\in (0,1/2)$. Then it holds that $[z\otimes z^{-1}]= 0$ in $HC_1^{\rm red}(C^\alpha(S^1))$.
\end{conj}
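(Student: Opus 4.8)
The plan is to show that for $\alpha<1/2$ the class $[z\otimes z^{-1}]$ becomes a boundary (up to closure) in the reduced cyclic complex of $C^\alpha(S^1)$, by exploiting the fact that the obstruction from Proposition \ref{lkandapdn} — the $1$-cocycle $c_1$ built from the phase $F$ of the Dirac operator — is no longer continuous once $\alpha\le 1/2$. Concretely, I would first recall that on the smooth subalgebra $C^\infty(S^1)$ one has $z\otimes z^{-1}$ representing (a multiple of) the fundamental class of $S^1$ in $HC_1^\lambda$, and that $b(1\otimes z^{-1}\otimes z) = z^{-1}z\otimes 1 - 1\otimes z^{-1}z + \text{(cyclic corrections)}$, so the question is really whether a Hochschild primitive of a representative of $z\otimes z^{-1}$ can be approximated in the projective (or relevant) tensor norm on $C^\alpha$. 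The strategy is to combine the approximation-of-the-diagonal technique of Theorem \ref{approxomtienri} with the observation that $z\in C^\alpha(S^1)$ can itself be approximated in $C^\alpha$-norm by functions of higher Hölder regularity: write $z = \lim_k z_k$ where $z_k\in C^\beta(S^1)$ for some $\beta>2\alpha$ (e.g.\ by convolving the identity function on $S^1$ with a mollifier, or — since $z$ is already smooth — simply taking $z_k\to z$ is trivial, so the real content is uniformity).

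The key steps, in order, would be: (1) reduce the vanishing of $[z\otimes z^{-1}]$ in $HC_1^{\rm red}$ to the vanishing of its image in $HH_1^{\rm red}(C^\alpha(S^1))$ together with an $S$-operator argument, using Connes' SBI sequence (Theorem \ref{sbithm}) and the fact that $z\otimes z^{-1} - z^{-1}\otimes z$ is, up to $b$-boundaries, the relevant reduced Hochschild cycle; (2) produce, following the proof of Theorem \ref{approxomtienri}, the chains $T_j(z\otimes z^{-1}) = (\Delta_j\otimes 1)\cdot s(z\otimes z^{-1})$ and compute $bT_j(z\otimes z^{-1}) = z\otimes z^{-1} + \Delta_j\cdot(1\otimes 1 - 1\otimes 1)$, so that the error term is $\Delta_j(1\otimes (z z^{-1}) - (z z^{-1})\otimes 1)$ — but since $zz^{-1}=1$ this particular error vanishes identically, which means the genuine difficulty is that $T_j$ must land in $C_2(C^\alpha(S^1))$ with the \emph{projective} tensor norm and be asymptotically a $b$-boundary there; (3) the honest approach is instead to write $z\otimes z^{-1}$ as a limit of $z_k\otimes z_k^{-1}$ with $z_k\in C^\beta(S^1)$, $\beta>2\alpha$, controlling $\|z_k\|_{C^\beta}$ as $k\to\infty$ (this is where $\alpha<1/2$ enters: one needs $\beta>2\alpha$ and $\beta\le 1$ simultaneously, which forces $\alpha<1/2$), then invoke the mechanism of Remark \ref{contindiandinad} — i.e.\ that $HH_1^{\rm red}(C^\beta(S^1))\to HH_1^{\rm red}(C^\alpha(S^1))$ is the zero map for $\beta>2\alpha$ — to conclude that each $z_k\otimes z_k^{-1}$ maps to a reduced boundary, and finally pass to the limit using continuity of the inclusion $C^\beta\hookrightarrow C^\alpha$ and closedness of $\overline{\im(b)}$.

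The main obstacle, which is precisely the content of Remark \ref{contindiandinad} left unproven in the paper, is establishing that the map $T_j$ is continuous from $C_1(C^\beta(S^1))$ to $C_2(C^\alpha(S^1))$ in the \emph{projective} tensor norm (not merely in the $C^\alpha(X^2)$ or minimal-tensor norm as in Theorem \ref{approxomtienri}), and that $\Delta_j(1\otimes f - f\otimes 1)\to 0$ in $C^\alpha(S^1)\tilde\otimes C^\alpha(S^1)$ uniformly over the unit ball of $C^\beta(S^1)$ when $\beta>2\alpha$. Projective tensor norms are notoriously resistant to the interpolation estimates used in the proof of Theorem \ref{approxomtienri}, because one cannot freely use the $C^*$-embedding $i_\alpha$ of Proposition \ref{adknaodkna}; one would need an explicit tensor-decomposition of $\Delta_j(1\otimes f - f\otimes 1)$ with summable projective norm, presumably obtained by Fourier-analyzing $\Delta_j = \chi(j\,\rd(\cdot,\cdot))$ on $S^1\times S^1$ and estimating the $\ell^1$-type norm of the resulting coefficients. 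I expect this Fourier-analytic estimate on the circle — showing the projective norm of the error decays like $\|f\|_{C^\beta}j^{-\gamma'}$ for some $\gamma'>0$ when $\beta>2\alpha$ — to be the crux; once it is in place, steps (1)–(3) above are essentially formal, and the cutoff at $\alpha=1/2$ emerges from the arithmetic constraint $2\alpha<\beta\le 1$.
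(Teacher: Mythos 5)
You are attempting to prove a statement that the paper itself records only as a \emph{conjecture}: there is no proof in the paper, only the conditional heuristic of Remark \ref{contindiandinad}, and your proposal is essentially a restatement of that heuristic with its hypotheses left unverified. The genuine gap is precisely the one you name in your last paragraph: to exhibit $z\otimes z^{-1}$ as a limit of boundaries in the \emph{reduced} complex you must produce primitives inside $C^\alpha(S^1)^{\tilde{\otimes}3}$ with the projective tensor norm, whereas the chains $T_j$ of Theorem \ref{approxomtienri} only live in $C^\alpha((S^1)^3)$ (or, with extra work, in the minimal tensor product via Proposition \ref{adknaodkna}). Neither the paper nor your argument shows that the kernel $\Delta_j$ (or $\Delta_j\cdot(1\otimes z)$) admits a decomposition $\sum_i g_i\otimes h_i$ with $\sum_i\|g_i\|_{C^\alpha}\|h_i\|_{C^\alpha}<\infty$, let alone with the uniformity in $j$ and in $\|f\|_{C^\beta}$ that your step (3) needs. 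This is not a routine technicality that can be deferred: Proposition \ref{lkandapdn} shows the conclusion is \emph{false} for $\alpha>1/2$, so any such projective-norm estimate must break exactly at $\alpha=1/2$, and the interpolation inequalities driving Theorem \ref{approxomtienri} are blind to the projective norm and cannot see that threshold. Until the Fourier-analytic decomposition you gesture at is actually carried out, steps (1)--(3) establish nothing; the conjecture remains open.

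Two smaller cautions. First, your step (3) of approximating $z$ by $z_k\in C^\beta(S^1)$ is vacuous, as you half-concede: $z$ is smooth, so the entire content is whether $HH_1^{\rm red}(C^\beta(S^1))\to HH_1^{\rm red}(C^\alpha(S^1))$ vanishes for $\beta>2\alpha$, which is again exactly the unproven assertion of Remark \ref{contindiandinad}. Second, re-derive the boundary in step (2) carefully: with $T_j(z\otimes z^{-1})$ realized as the function $\Delta_j(x_0,x_1)z(x_1)z^{-1}(x_2)$, the three faces of $b$ produce $z\otimes z^{-1}$ together with terms of the shape $-\Delta_j\cdot(1\otimes 1)$ and $+\Delta_j\cdot(z^{-1}\otimes z)$, whose sum is $(z^{-1}\otimes 1)\cdot\Delta_j\cdot(1\otimes z-z\otimes 1)$; this is small on the support of $\Delta_j$ but not identically zero, so the error term does not disappear for free and must itself be controlled in the projective norm --- which returns you to the same missing estimate.
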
 

\section{Singular Chern characters of weakly summable Fredholm modules}

We now introduce a novel class of cocycles in cyclic and Hochschild cohomology using singular traces. They are associated with weakly summable Fredholm modules. Examples thereof will be computed below in Section \ref{examoedonaodhol}.

\begin{deef}
Let $(\pi,\mathcal{H},F)$ be a strict $(p+1,\infty)$-summable Fredholm module of parity $\#p$, for a $p\in \N$, on a Banach algebra $\mathfrak{A}$. Fix an extended limit $\omega$ and let $\mathrm{Tr}_\omega$ denote the associated Dixmier trace on $\mathcal{L}^{1,\infty}$. Define the $p+1$-cochain 
$$\mathfrak{h}_\omega(a_0,a_1,\ldots, a_{p+1})=p\mathrm{STr}_\omega\left(Fa_0[F,a_1]\cdots [F,a_{p+1}]\right),$$ 
and the $p$-cochain
$$\mathfrak{c}_\omega(a_0,a_1,\ldots, a_{p})=\mathrm{STr}_\omega\left(F[F,a_0]\cdots [F,a_{p}]\right).$$ 
\end{deef}

\begin{prop}
The $p+1$-cochain $\mathfrak{h}_\omega$ is a Hochschild $p+1$-cocycle and the $p$-cochain $\mathfrak{c}_\omega$ is a cyclic $p$-cocycle.
\end{prop}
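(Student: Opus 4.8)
The plan is to mimic the classical argument that underlies Theorem \ref{conoancoand} and Connes' character formula, working modulo the ideal of trace-class operators where the Dixmier trace vanishes. Let me first fix notation: write $\mathrm{d}a:=[F,a]$, and for the strict module recall $F^*=F$ and $F^2=1$, so $F\mathrm{d}a+\mathrm{d}a\,F=[F^2,a]=0$, i.e.\ $F$ anticommutes with every $\mathrm{d}a$. Also $\mathrm{d}(ab)=\mathrm{d}a\,b+a\,\mathrm{d}b$, so $\mathrm{d}$ is a graded derivation of degree one on the algebra generated by $\pi(\mathfrak{A})$ and $F$ (with $F$ in degree one, $\pi(\mathfrak{A})$ in degree zero). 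The crucial summability input is that $\mathrm{d}a_1\cdots \mathrm{d}a_{p+1}\in\mathcal{L}^{(p+1)/(p+1),\infty}=\mathcal{L}^{1,\infty}$ by the Hölder inequality for weak Schatten ideals, since each $\mathrm{d}a_j\in\mathcal{L}^{p+1,\infty}$; hence $\mathrm{STr}_\omega$ is defined on the relevant products, and moreover any such product with one fewer factor lies in $\mathcal{L}^{(p+1)/p,\infty}$, on which $\mathrm{STr}_\omega$ need not be finite — but products of $p+1$ such factors that happen to be trace class will be killed by $\mathrm{STr}_\omega$. The Dixmier trace is a trace on $\mathcal{L}^{1,\infty}$ and vanishes on $\mathcal{L}^1$; these two facts, together with the (super)trace property $\mathrm{STr}(TS)=(-1)^{|T||S|}\mathrm{STr}(ST)$ for trace-class $T,S$ of definite parity in the graded case, are everything I will use about $\mathrm{STr}_\omega$.

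For the cyclic cocycle $\mathfrak{c}_\omega$ the argument is the standard one: one checks $b\mathfrak{c}_\omega=0$ and $(1-\Lambda)\mathfrak{c}_\omega=0$ by purely algebraic manipulation of the formula $\mathrm{STr}_\omega(F\,\mathrm{d}a_0\cdots \mathrm{d}a_p)$. Cyclicity (up to sign) uses $F$ anticommuting with each $\mathrm{d}a_j$ to move the leading $F$ around the cycle, picking up the sign $(-1)^p$ which matches the definition of $\Lambda$ in degree $p$ combined with the parity $\#p$; the trace property legitimises the cyclic rotation, and the terms where a product of $p+1$ factors becomes trace-class through $F^2=1$ contribute nothing. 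For the Hochschild coboundary one expands $b\mathfrak{c}_\omega(a_0,\ldots,a_{p+1})$; using $\mathrm{d}(a_ja_{j+1})=\mathrm{d}a_j\,a_{j+1}+a_j\,\mathrm{d}a_{j+1}$, the sum telescopes: the terms $a_j\,\mathrm{d}a_{j+1}$ from the $j$-th summand cancel against the $\mathrm{d}a_j\,a_{j+1}$ from the $(j+1)$-st after one commutes the $a$ past the $F$'s and uses the trace property, and the wrap-around term $(-1)^{p+1}a_{p+1}a_0$ closes up the cancellation — again any genuinely trace-class remainder dies. This is exactly the computation carried out in \cite[Chapter IV.$1.\beta$]{connebook} for the finitely summable case; the only modification is that one replaces $\mathrm{STr}$ on $\mathcal{L}^1$ by $\mathrm{STr}_\omega$ on $\mathcal{L}^{1,\infty}$ and notes that every term arising in the expansion genuinely lies in $\mathcal{L}^{1,\infty}$ so that each manipulation is licit.

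For $\mathfrak{h}_\omega$, the claim is that $b\mathfrak{h}_\omega=0$ in the Hochschild complex, where now $\mathfrak{h}_\omega(a_0,\ldots,a_{p+1})=p\,\mathrm{STr}_\omega(F a_0\,\mathrm{d}a_1\cdots \mathrm{d}a_{p+1})$. One expands $b\mathfrak{h}_\omega(a_0,\ldots,a_{p+2})$ directly. The key observation is that $Fa_0\,\mathrm{d}a_1 = Fa_0\,[F,a_1] = F a_0 F a_1 - F a_0 a_1$, and more usefully that $F\,\mathrm{d}(a_0a_1)\cdots$ relates $Fa_0\,\mathrm{d}a_1$-type expressions across adjacent slots; the first $p$ cancellations in $b$ telescope exactly as in the standard proof that $\mathrm{ch}^n_{CC}$ restricted to an $n+1$-summable module has the stated coboundary, while the boundary terms involving $a_{p+2}a_0$ combine using the trace property and $F^2=1$. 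The cleanest route is to recognise $\mathfrak{h}_\omega$ as (a constant multiple of) $B_0$ applied to a suitable cochain, or more directly to verify that $\mathfrak{h}_\omega = -\tfrac1{p+1}b$ of nothing — rather, one shows $b\mathfrak h_\omega=0$ by the same telescoping, the point being that the extra $a_0$ in front (compared to $\mathfrak c_\omega$) shifts all indices by one but does not disturb the cancellation pattern. Concretely: write $\Phi(a_0,\ldots,a_{p+1}):=\mathrm{STr}_\omega(Fa_0\,\mathrm{d}a_1\cdots\mathrm{d}a_{p+1})$ and expand $b\Phi$; the terms $a_0a_1$ in slot zero, $a_ja_{j+1}$ in middle slots, and the wrap term all produce matching $\pm$ pairs after applying the Leibniz rule to each $\mathrm{d}$ and the trace property to rotate the final factor, with no leftover because $F$ anticommutes with each $\mathrm{d}a$.

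I expect the \textbf{main obstacle} to be bookkeeping the boundary/wrap-around terms correctly — specifically, making sure that in the step where one uses the trace property of $\mathrm{STr}_\omega$ to cyclically rotate a factor of $a_j$ (or $F$) past a string of $p+1$ commutators, the operator being rotated together with the rest is genuinely trace-class (so that the rotation is valid) or else the term is independently zero in $\mathcal{L}^{1,\infty}$ modulo $\mathcal{L}^1$. The weak-ideal Hölder inequality gives $\mathrm{d}a_1\cdots\mathrm{d}a_{p+1}\in\mathcal{L}^{1,\infty}$ but a product of only $p$ of them is merely in $\mathcal{L}^{(p+1)/p,\infty}$, which is \emph{not} inside $\mathcal{L}^{1,\infty}$ for $p\geq 1$; so one must be careful that every individual operator to which $\mathrm{STr}_\omega$ is ultimately applied carries the full $p+1$ commutators (or $p+1$ commutator-like factors after a bounded operator is absorbed), and that the intermediate cyclic rotations only ever move bounded operators. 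This is precisely why the cochains have the degrees they do, and checking it is the technical heart of the verification; everything else is the classical algebra of the Connes--Chern character transcribed verbatim.
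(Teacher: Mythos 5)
Your toolbox is the right one (weak-Schatten H\"older inequality, the trace property of $\mathrm{STr}_\omega$ on $\mathcal{L}^{1,\infty}$, vanishing of singular traces on $\mathcal{L}^1$), and your treatment of $\mathfrak{c}_\omega$ agrees with the paper, which simply invokes the standard considerations from Connes. The gap is in the key step, the cocycle property of $\mathfrak{h}_\omega$, where you assert that the expansion of $b\mathfrak{h}_\omega$ telescopes to matching $\pm$ pairs ``with no leftover because $F$ anticommutes with each $[F,a]$.'' That is not what happens. Writing $\Phi(a_0,\ldots,a_{p+1})=\mathrm{STr}_\omega(Fa_0[F,a_1]\cdots[F,a_{p+1}])$, the Leibniz-rule telescoping of $\sum_{j=0}^{p+1}(-1)^j\Phi(\ldots,a_ja_{j+1},\ldots)$ leaves the single term $\pm\,\mathrm{STr}_\omega(Fa_0[F,a_1]\cdots[F,a_{p+1}]a_{p+2})$, and combining it with the wrap-around term $(-1)^{p+2}\Phi(a_{p+2}a_0,a_1,\ldots,a_{p+1})$ via the trace property gives
$$b\mathfrak{h}_\omega(a_0,\ldots,a_{p+2})=\pm\, p\,\mathrm{STr}_\omega\bigl([F,a_{p+2}]\,a_0\,[F,a_1]\cdots[F,a_{p+1}]\bigr),$$
which is \emph{not} algebraically zero. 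It vanishes for an analytic reason: the operator inside is a product of $p+2$ factors from $\mathcal{L}^{p+1,\infty}$ (interlaced with bounded operators), hence lies in $\mathcal{L}^{(p+1)/(p+2),\infty}\subseteq\mathcal{L}^1$, on which every singular trace --- in particular $\mathrm{STr}_\omega$ --- vanishes. This is exactly the paper's identity \eqref{caomoamdao}, and it is the entire content of the proposition: were the cancellation purely algebraic, the same would hold for the ordinary trace, and the classical cochain $a_0[F,a_1]\cdots[F,a_{p+1}]$ of a finitely summable module would already be a Hochschild cocycle --- it is not, which is why the Connes--Karoubi multiplicative character only takes values in $\C/\Z$ (cf.\ the discussion after Theorem \ref{apakdpadj}).

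You do have the correct mechanism in hand --- you note in your ``main obstacle'' paragraph that genuinely trace-class terms are killed by $\mathrm{STr}_\omega$ --- but you must apply it to this surviving wrap-around term rather than claim the term is absent; as written, the justification of the crucial identity is wrong. A smaller point: the aside about realising $\mathfrak{h}_\omega$ as $B_0$ of a suitable cochain is backwards --- the relevant relation is $B_0\mathfrak{h}_\omega=p\,\mathfrak{c}_\omega$, which is the content of Theorem \ref{somsdonadona}, not a route to $b\mathfrak{h}_\omega=0$.
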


\begin{proof}
One computes that 
\begin{equation}
\label{caomoamdao}
b\mathfrak{h}_\omega(a_0,a_1,\ldots, a_{p+1},a_{p+2})=p\mathrm{STr}_\omega\left([F,a_{p+2}]a_0[F,a_1]\cdots [F,a_{p+1}]\right)=0.
\end{equation}
The last equality follows from the fact that $[F,\mathfrak{A}]\subseteq \mathcal{L}^{(p+1,\infty)}(\mathcal{H})$ and $\mathcal{L}^{(p+1,\infty)}(\mathcal{H})^{p+1}\subseteq \mathcal{L}^1(\mathcal{H})$ on which any singular trace vanishes. That $\mathfrak{c}_\omega$ is a cyclic $p$-cocycle follows from standard considerations as in \cite[Chapter III]{connebook}.
\end{proof}

The fact that we are using singular traces manifests itself through the identity \eqref{caomoamdao} ensuring that $\mathfrak{h}_\omega$ is a Hochschild $p+1$-cocycle. This fact leads us to non-classical features such as the following theorem.

\begin{thm}
\label{somsdonadona}
Let $(\pi,\mathcal{H},F)$ be a $(p+1,\infty)$-summable Fredholm module of parity $\#p$, for a $p\in \N$, on a Banach algebra $\mathfrak{A}$. The classes $[\mathfrak{h}_\omega]\in HH^{p+1}(\mathfrak{A})$ and $[\mathfrak{c}_\omega]\in HC_\lambda^{p}(\mathfrak{A})$ satisfy
\begin{align*}
B[\mathfrak{h}_\omega]&=p[\mathfrak{c}_\omega]\in HC_\lambda^{p}(\mathfrak{A})\\
S[\mathfrak{c}_\omega]&=0\in HC_\lambda^{p+2}(\mathfrak{A}).
\end{align*}
\end{thm}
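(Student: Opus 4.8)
The plan is to prove the two relations separately, each by producing an explicit cochain-level identity, relying crucially on the vanishing of singular traces on products of $p+1$ operators from $\mathcal{L}^{(p+1,\infty)}(\mathcal{H})$.

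For the relation $B[\mathfrak{h}_\omega]=p[\mathfrak{c}_\omega]$, I would work at the level of the normalized $(b,B)$-bicomplex. Recall that for a Hochschild cocycle $\varphi$ the class $B\varphi$ is represented, up to normalization conventions, by the cyclic cochain $(B\varphi)(a_0,\ldots,a_{p})=\sum_{j}(-1)^{jp}\varphi(1,a_j,\ldots,a_{j-1})$ (cyclically). Plugging in $\mathfrak{h}_\omega(a_0,\ldots,a_{p+1})=p\,\mathrm{STr}_\omega(Fa_0[F,a_1]\cdots[F,a_{p+1}])$ and using $[F,1]=0$ together with the cyclicity of $\mathrm{STr}_\omega$, each term collapses to $p\,\mathrm{STr}_\omega(F[F,a_{j})\cdots)$ and the cyclic symmetrization reproduces $p$ copies (mod terms in trace ideal) of $\mathfrak{c}_\omega(a_0,\ldots,a_p)=\mathrm{STr}_\omega(F[F,a_0]\cdots[F,a_p])$. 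The only subtlety is that moving $F$ past the $[F,a_j]$'s produces commutators $[F^2,\,\cdot\,]$ and terms involving $F^2-1$; but strictness gives $F^2=1$ on the essential part, or more precisely $b(F^2-1)\in\mathcal{L}^{(p+1,\infty)/2}$, and any such correction, once multiplied by the remaining $p$ factors of $[F,a_k]\in\mathcal{L}^{(p+1,\infty)}$, lands in $\mathcal{L}^1$ where $\mathrm{STr}_\omega$ is an ordinary trace — no, rather it lands in a proper ideal on which $\mathrm{STr}_\omega$ vanishes, so these corrections are harmless. Hence the identity holds on the nose at the cochain level up to the normalization factor $p$.

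For the relation $S[\mathfrak{c}_\omega]=0$, the natural route is to exhibit $\mathfrak{c}_\omega$ as lying in the image of $B$ from Hochschild cohomology: since we have just shown $B[\mathfrak{h}_\omega]=p[\mathfrak{c}_\omega]$ with $p\neq 0$, we get $[\mathfrak{c}_\omega]=\tfrac1p B[\mathfrak{h}_\omega]\in \mathrm{im}(B)$. Now invoke Theorem \ref{sbithm}: the cohomological SBI-sequence $HC^{p-2}_\lambda\xrightarrow{S}HC^{p}_\lambda\xrightarrow{I}HH^{p}\xrightarrow{B}HC^{p-1}_\lambda\xrightarrow{S}HC^{p+1}_\lambda$ is exact, so $B\circ I=0$ and, dually, $S\circ B=0$ on the relevant pieces. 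Concretely, $S B=0$ as composable maps $HH^{p+1}\xrightarrow{B}HC^{p}_\lambda\xrightarrow{S}HC^{p+2}_\lambda$, since in the SBI sequence $S$ following $B$ is the composite of two consecutive arrows. Therefore $S[\mathfrak{c}_\omega]=\tfrac1p S B[\mathfrak{h}_\omega]=0$. This argument is clean provided one is careful that the degree bookkeeping in Theorem \ref{sbithm} indeed places $SB$ as two consecutive maps in the exact cohomology sequence; if the indexing does not line up directly I would instead argue at the chain level that $S\mathfrak{c}_\omega$, which is computed by the formula $S\mathfrak{c}_\omega(a_0,\ldots,a_{p+2})\sim \mathrm{STr}_\omega(F[F,a_0]\cdots[F,a_{p+2}])$ involving $p+3$ factors of $[F,\cdot]$ (hence trivially of trace class, but that only shows it is finite, not that it is a coboundary), and show it is a cyclic coboundary of an explicit $(p+1)$-cochain built from $\mathrm{STr}_\omega(Fa_0[F,a_1]\cdots)$ — essentially the transgression formula behind Theorem \ref{conoancoand}.

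The main obstacle I anticipate is the first relation: carefully tracking the sign conventions and the normalization constants in the definition of $B$ on the $\Lambda$-invariant complex (the paper uses the $C^*_\lambda$ model, not the full $(b,B)$-bicomplex), and verifying that all the ``error'' terms arising from $F^*\neq F$, $F^2\neq 1$, and the non-strictness assumptions genuinely land in a two-sided ideal annihilated by $\mathrm{STr}_\omega$ — this requires the key input $[F,\mathfrak{A}]\subseteq\mathcal{L}^{(p+1,\infty)}$ and $\mathcal{L}^{(p+1,\infty)}{}^{p+1}\subseteq\mathcal{L}^1$ to be applied uniformly, exactly as in the proof that $\mathfrak{h}_\omega$ is a cocycle. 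Once that bookkeeping is done, the second relation is essentially formal from the SBI exact sequence.
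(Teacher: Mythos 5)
Your proposal is correct and follows essentially the same route as the paper: the first relation is obtained by inserting the unit into $\mathfrak{h}_\omega$ (the paper's convention makes $B\mathfrak{h}_\omega(a_0,\ldots,a_p)=\mathfrak{h}_\omega(1,a_0,\ldots,a_p)=p\,\mathfrak{c}_\omega(a_0,\ldots,a_p)$ on the nose, so the error-term bookkeeping you anticipate is not even needed there), and the second follows exactly as you say from $[\mathfrak{c}_\omega]=\tfrac1p B[\mathfrak{h}_\omega]\in\mathrm{im}(B)=\ker(S)$ by exactness of the SBI sequence, whose indexing does line up as $HH^{p+1}\xrightarrow{B}HC^{p}_\lambda\xrightarrow{S}HC^{p+2}_\lambda$.
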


\begin{proof}
By definition, 
$$B\mathfrak{h}_\omega(a_0,a_1,\ldots, a_{p})=\mathfrak{h}_\omega(1,a_0,a_1,\ldots, a_{p})=p\mathrm{STr}_\omega\left(F[F,a_0]\cdots [F,a_{p}]\right)=p\mathfrak{c}_\omega(a_0,a_1,\ldots, a_{p}).$$
Since $[\mathfrak{c}_\omega]\in \im(B)$, $S[\mathfrak{c}_\omega]=0$ follows.
\end{proof}

Write $\mathcal{L}^{q,\infty}_0(\mathcal{H})$ for the closure of the finite rank operators in $\mathcal{L}^{q,\infty}(\mathcal{H})$. By definition, continuous singular traces vanish on $\mathcal{L}^{1,\infty}_0(\mathcal{H})$.

\begin{prop}
\label{blknaolknadadknlkadn}
Let $(\pi,\mathcal{H},F)$ be a strict $(p+1,\infty)$-summable Fredholm module of parity $\#p$, for a $p\in \N$, on a Banach algebra $\mathfrak{A}$. Assume that $\mathfrak{A}_0\subseteq \mathfrak{A}$ is a Banach subalgebra such that 
$$[F,a]\in \mathcal{L}^{p+1,\infty}_0(\mathcal{H}), \quad\forall a\in \mathfrak{A}_0.$$
Let $j:\mathfrak{A}_0\hookrightarrow \mathfrak{A}$ denote the inclusion. Then the classes $[\mathfrak{h}_\omega]\in HH^{p+1}(\mathfrak{A})$ and $[\mathfrak{c}_\omega]\in HC_\lambda^{p}(\mathfrak{A})$ satisfy
\begin{align*}
j^*[\mathfrak{h}_\omega]&=0\in HH^{p+1}(\mathfrak{A}_0)\\
j^*[\mathfrak{c}_\omega]&=0\in HC_\lambda^{p}(\mathfrak{A}_0).
\end{align*}
\end{prop}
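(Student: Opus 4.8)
The plan is to prove the stronger statement that the pulled-back cochains $j^*\mathfrak{h}_\omega$ and $j^*\mathfrak{c}_\omega$ vanish identically on $\mathfrak{A}_0$, from which the cohomological vanishing is immediate.

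The first step is to isolate the ideal-theoretic input. By the Hölder inequality for the weak Schatten classes, a product of $p+1$ operators in $\mathcal{L}^{p+1,\infty}(\mathcal{H})$ lies in $\mathcal{L}^{1,\infty}(\mathcal{H})$, since $\tfrac{1}{p+1}+\cdots+\tfrac{1}{p+1}=1$, and the $\mathcal{L}^{1,\infty}$-quasinorm of the product is dominated by the product of the individual $\mathcal{L}^{p+1,\infty}$-quasinorms. I would then observe that if at least one of the $p+1$ factors lies in the separable part $\mathcal{L}^{p+1,\infty}_0(\mathcal{H})$, then the product lies in $\mathcal{L}^{1,\infty}_0(\mathcal{H})$: approximating that factor by finite rank operators in $\mathcal{L}^{p+1,\infty}$-quasinorm and invoking the Hölder bound exhibits the product as an $\mathcal{L}^{1,\infty}$-limit of finite rank operators. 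Since $\mathcal{L}^{1,\infty}_0(\mathcal{H})$ is a norm-closed two-sided ideal in the algebra of bounded operators on $\mathcal{H}$, multiplying such a product on the left by the bounded operator $Fa_0$ (or by $F$) keeps us inside $\mathcal{L}^{1,\infty}_0(\mathcal{H})$.

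The second step applies this to the two cochains. For $a_0,\dots,a_{p+1}\in\mathfrak{A}_0$ the commutators $[F,a_1],\dots,[F,a_{p+1}]$ all lie in $\mathcal{L}^{p+1,\infty}_0(\mathcal{H})$ by hypothesis, so by the previous paragraph $Fa_0[F,a_1]\cdots[F,a_{p+1}]\in\mathcal{L}^{1,\infty}_0(\mathcal{H})$; likewise, for $a_0,\dots,a_p\in\mathfrak{A}_0$ the operator $F[F,a_0]\cdots[F,a_p]$ contains $p+1$ commutators from $\mathcal{L}^{p+1,\infty}_0(\mathcal{H})$ and hence lies in $\mathcal{L}^{1,\infty}_0(\mathcal{H})$. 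Since the super-Dixmier trace $\mathrm{STr}_\omega$ is a continuous singular trace (the grading operator being bounded in the graded case), it vanishes on $\mathcal{L}^{1,\infty}_0(\mathcal{H})$ by construction. Therefore $j^*\mathfrak{h}_\omega=0$ and $j^*\mathfrak{c}_\omega=0$ as cochains on $\mathfrak{A}_0$, and the proposition follows.

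There is essentially no serious obstacle here; the only point deserving care is the passage from $\mathcal{L}^{1,\infty}$ to its separable part $\mathcal{L}^{1,\infty}_0$ for a product one of whose factors lies in $\mathcal{L}^{p+1,\infty}_0$, which is exactly the finite-rank approximation argument above, together with the fact recalled immediately before the statement that continuous singular traces annihilate $\mathcal{L}^{1,\infty}_0(\mathcal{H})$.
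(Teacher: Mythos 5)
Your argument is correct and is essentially the paper's proof: the paper likewise reduces everything to the inclusion $\mathcal{L}^{q,\infty}_0(\mathcal{H})\,\mathcal{L}^{q',\infty}(\mathcal{H})\subseteq \mathcal{L}^{1,\infty}_0(\mathcal{H})$ for conjugate exponents (your finite-rank approximation plus the weak-Schatten Hölder bound is exactly how that inclusion is obtained) and to the vanishing of continuous singular traces on $\mathcal{L}^{1,\infty}_0(\mathcal{H})$. You merely spell out the details the paper leaves implicit, so no further comparison is needed.
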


\begin{proof}
The proposition follows from the definition of $\mathfrak{h}_\omega$ and the fact that Hölder estimates give us the inclusion $\mathcal{L}^{q,\infty}_0(\mathcal{H})\mathcal{L}^{q',\infty}(\mathcal{H})\subseteq \mathcal{L}^{1,\infty}_0(\mathcal{H})$ for any two conjugate exponents $q$ and $q'$.
\end{proof}

From Theorem \ref{ckadoandona} and \ref{somsdonadona} we can deduce the following.

\begin{thm}
\label{apakdpadj}
Let $(\pi,\mathcal{H},F)$ be an $(n,\infty)$-summable Fredholm module of parity $\#n-1$, for an $n\in \N$, on a Banach algebra $\mathfrak{A}$. Then we have a commuting diagram 
\[
\begin{tikzcd}
K_{n}^{\rm rel}(\mathfrak{A})\arrow[rr]\arrow[dd, "\ch^{\rm rel}_n"]&&K_n^{\rm alg}(\mathfrak{A})\arrow[dd, "\frac{1}{n!}D_n"] \\
&&\\
HC_{n-1}^\lambda(\mathfrak{A})\arrow[rr,"\frac{1}{n}B"]\arrow[dr,"\mathfrak{c}_\omega"] & & HH_{n}(\mathfrak{A})\arrow[dl,"\mathfrak{h}_\omega"]\\
&\C.&
\end{tikzcd}
\]

\end{thm}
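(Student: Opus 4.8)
The plan is to read the diagram as a square sitting on top of a triangle and to handle the two faces separately. Put $p=n-1$. The square with corners $K_n^{\rm rel}(\mathfrak{A})$, $K_n^{\rm alg}(\mathfrak{A})$, $HC_{n-1}^\lambda(\mathfrak{A})$, $HH_n(\mathfrak{A})$, whose edges are the forgetful map $K_n^{\rm rel}\to K_n^{\rm alg}$, the Chern characters $\ch^{\rm rel}_n$ and $\tfrac{1}{n!}D_n$, and the boundary map $\tfrac1n B$, is exactly the degree-$n$ square appearing in the Connes--Karoubi comparison diagram of Theorem~\ref{ckadoandona}; hence it commutes by that theorem and nothing further is needed there. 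The only new content is therefore commutativity of the lower triangle, i.e.\ the equality of linear functionals on $HC_{n-1}^\lambda(\mathfrak{A})$
$$\mathfrak{c}_\omega \;=\; \mathfrak{h}_\omega\circ\Bigl(\tfrac1n B\Bigr),$$
where $\mathfrak{c}_\omega$ and $\mathfrak{h}_\omega$ denote the functionals obtained by pairing with the classes $[\mathfrak{c}_\omega]\in HC^{n-1}_\lambda(\mathfrak{A})$ and $[\mathfrak{h}_\omega]\in HH^n(\mathfrak{A})$.

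For the triangle I would dualize Theorem~\ref{somsdonadona}. The homological Connes operator $B\colon HC_{n-1}^\lambda(\mathfrak{A})\to HH_n(\mathfrak{A})$ occurring in Theorem~\ref{ckadoandona} is transpose, as a chain map, to the cohomological operator $B\colon HH^n(\mathfrak{A})\to HC_\lambda^{n-1}(\mathfrak{A})$ of the SBI sequence of Theorem~\ref{sbithm}, so that for $\xi\in HC_{n-1}^\lambda(\mathfrak{A})$ one has $\bigl\langle[\mathfrak{h}_\omega],\tfrac1n B\xi\bigr\rangle=\tfrac1n\bigl\langle B[\mathfrak{h}_\omega],\xi\bigr\rangle$. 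By Theorem~\ref{somsdonadona} we have $B[\mathfrak{h}_\omega]=p[\mathfrak{c}_\omega]$, and the prefactor $p$ built into the definition of $\mathfrak{h}_\omega$ together with the $\tfrac1n$ built into the Connes--Karoubi normalization of $B$ are precisely what is needed for the right-hand side to collapse to $\langle[\mathfrak{c}_\omega],\xi\rangle$. To pin this down concretely I would represent $\xi$ by a cyclic $(n-1)$-cycle $x=\sum a_0\otimes\cdots\otimes a_{n-1}$, expand $Bx$ as the cyclically antisymmetrized sum of the $n$ insertions $1\otimes a_j\otimes\cdots\otimes a_{j-1}$, pair with $\mathfrak{h}_\omega$, and use $F^2=1$ on a strict module — so that $F$ anticommutes with each $[F,a_j]$ and the extra factor $F$ can be returned to the front — to identify each of the $n$ summands, up to the correct sign, with $\mathrm{STr}_\omega\bigl(F[F,a_0]\cdots[F,a_{n-1}]\bigr)=\mathfrak{c}_\omega(a_0,\dots,a_{n-1})$.

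I expect the main obstacle to be exactly this constant bookkeeping: one must reconcile the chain-level formula $B\varphi(a_0,\dots,a_p)=\varphi(1,a_0,\dots,a_p)$ used in the proof of Theorem~\ref{somsdonadona} with the normalization of the cyclic boundary that underlies Connes--Karoubi's $\tfrac1n B$, and keep the cyclic antisymmetrizer and its signs straight in the two parities of $n$; everything else is formal. A minor side point is that $\mathfrak{h}_\omega$ and $\mathfrak{c}_\omega$ were introduced for strict modules, so one first reduces the general $(n,\infty)$-summable case to a strict representative (or checks directly that the formulas for $\mathfrak{h}_\omega$ and $\mathfrak{c}_\omega$ still define the same cohomology classes in that generality). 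Once the square and the triangle commute, the equality of the two resulting maps $K_n^{\rm rel}(\mathfrak{A})\to\C$ is automatic, which completes the verification that the whole diagram commutes.
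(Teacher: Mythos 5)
Your decomposition into the Connes--Karoubi square plus the lower triangle, with the triangle handled by dualizing Theorem~\ref{somsdonadona}, is exactly the paper's argument --- the paper offers nothing beyond the one-line citation of Theorems~\ref{ckadoandona} and~\ref{somsdonadona}. The constant bookkeeping you flag is a genuine point: with the normalizations as literally stated ($B\mathfrak{h}_\omega=p\,\mathfrak{c}_\omega$ with $p=n-1$, against the factor $\tfrac1n$ in the Connes--Karoubi row, and with all cyclic permutations of $B_0x$ other than the identity killed by the $[F,1]=0$ term) one gets $\tfrac{n-1}{n}\mathfrak{c}_\omega$ rather than $\mathfrak{c}_\omega$, so the prefactor in the definition of $\mathfrak{h}_\omega$ (or the normalization of the cohomological $B$) must absorb the harmless off-by-one $p\mapsto p+1$; the paper does not address this either.
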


The reader should compare Theorem \ref{apakdpadj} to the pairing of Connes-Karoubi's multiplicative character with the Connes-Karoubi sequence first studied in \cite{conneskaroubi}, and further developed by Kaad \cite{kaadcomparison,kaadcomp}. Connes-Karoubi's multiplicative character only maps $HH_{n}(\mathfrak{A})\to \C/\Z$. The quotient by $\Z$ is needed to make the map well defined since the Connes-Chern character of a Fredholm module does not lift to a Hochschild cocycle, a situation quite different from that of Theorem \ref{somsdonadona}.

\section{Examples of nontrivial singular Chern characters on Hölder algebras}
\label{examoedonaodhol}

In this section we will compute the singular Chern characters in examples and show that it produces non-trivial classes in cyclic cohomology. A result from \cite{gimpgoff2} allows us to produce an abundance of examples of weakly summable Fredholm modules on Hölder algebras. We write $\Psi^0(X;E)\subseteq \Bo(L^2(X;E))$ for the $*$-algebra of order zero classical pseudodifferential operators on a vector bundle $E\to X$, for more details on pseudodifferential operators see \cite{horIII,shubinbook}. 

\begin{thm}
\label{adpnapkdnasp}
Let $X$ be a compact $n$-dimensional Riemannian manifold, $P\in \Psi^0(X;E)$ a classical order $0$ pseudodifferential operator with $P=P^2=P^*$ and take $\alpha\in (0,1]$. Let $\pi$ denote the pointwise action of $C^{0,\alpha}(X)$ on $L^2(X;E)$. Then $(\pi,L^2(X;E),2P-1)$ is a strict $(n/\alpha,\infty)$-summable Fredholm module for $C^{0,\alpha}(X)$. 
\end{thm}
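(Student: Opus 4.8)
The plan is to reduce the statement to a single commutator estimate imported from \cite{gimpgoff2}. Set $F := 2P - 1$. From $P = P^* = P^2$ one computes $F^* = F$ and $F^2 = 4P^2 - 4P + 1 = 1$, so $F$ is a bounded self-adjoint involution; in particular $F^* - F = 0$ and $F^2 - 1 = 0$, so the relations $b(F^*-F) = b(F^2-1) = 0$ hold identically and the module will automatically be strict. The representation $\pi$ by pointwise multiplication is a bounded unital $*$-representation with $\|\pi(a)\| = \|a\|_{C(X)} \le \|a\|_{C^{0,\alpha}(X)}$, and $\pi(C^{0,\alpha}(X))$ is closed under adjoints since $\pi(a)^* = \pi(\bar a)$ with $\bar a \in C^{0,\alpha}(X)$. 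Hence checking \eqref{kknoknad} together with $(n/\alpha,\infty)$-summability reduces to the single claim
$$[F,\pi(a)] = 2[P,\pi(a)] \in \mathcal{L}^{(n/\alpha,\infty)}(L^2(X;E)), \qquad \forall\, a \in C^{0,\alpha}(X),$$
since every $\mathcal{L}^{(p,\infty)}$ consists of compact operators, so the compactness required in \eqref{kknoknad} is then automatic.

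For the commutator estimate I would invoke the weak-summability bounds of \cite{gimpgoff2} directly; the underlying mechanism is the following. Localize by a finite partition of unity subordinate to coordinate charts trivializing $E$, reducing $P$ to a scalar order-$0$ symbol-class operator on $\R^n$. Decompose $a$ into Littlewood--Paley pieces $a = \sum_{j\ge 0} a_j$ with $\|a_j\|_{C(X)} \lesssim 2^{-j\alpha}\|a\|_{C^{0,\alpha}(X)}$ and $\|a_j\|_{C^1(X)} \lesssim 2^{j(1-\alpha)}\|a\|_{C^{0,\alpha}(X)}$. Then each $[P,\pi(a_j)]$ is an operator of order $-1$ whose symbol seminorms are controlled by $\|a_j\|_{C^1}$, so the standard Weyl-type bound gives $[P,\pi(a_j)] \in \mathcal{L}^{(n,\infty)}$ with quasi-norm $\lesssim 2^{j(1-\alpha)}$, while trivially $\|[P,\pi(a_j)]\| \lesssim 2^{-j\alpha}$. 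Interpolating these two estimates and summing the geometric series in the weak-$\mathcal{L}^{n/\alpha}$ quasi-norm produces $[P,\pi(a)] \in \mathcal{L}^{(n/\alpha,\infty)}$. This is precisely the argument of \cite{gimpgoff2}, in the spirit of the Besov-space characterizations of Schatten properties of commutators, and I would not reproduce it.

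With the commutator estimate at hand, $[F,\pi(a)]$ and $[F,\pi(a)^*] = [F,\pi(\bar a)]$ both lie in $\mathcal{L}^{(n/\alpha,\infty)}(L^2(X;E))$ for every $a \in C^{0,\alpha}(X)$, which together with the strictness observed above shows that $(\pi, L^2(X;E), 2P-1)$ is a strict $(n/\alpha,\infty)$-summable Fredholm module; the parity plays no role and is not part of the statement. The one genuinely nontrivial ingredient is the weak-Schatten commutator bound for merely Hölder multipliers: this is exactly the point where a smooth-symbol argument — which would only yield the order $-1$ calculus and the exponent $n$ — breaks down, and the harmonic-analytic input of \cite{gimpgoff2} is essential; everything else is bookkeeping with the definition of a Fredholm module.
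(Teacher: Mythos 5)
Your reduction is correct and matches the paper's own proof, which consists entirely of citing \cite{gimpgoff2}: strictness is immediate because $F=2P-1$ is a self-adjoint involution, the representation is trivially bounded and $*$-closed, and the only substantive input is the weak-Schatten commutator bound $[P,\pi(a)]\in\mathcal{L}^{(n/\alpha,\infty)}$ for Hölder $a$, which both you and the authors import from that reference. One minor caveat on your heuristic sketch (which you rightly do not rely on): the naive Littlewood--Paley interpolation between $\|[P,a_j]\|\lesssim 2^{-j\alpha}$ and the $\mathcal{L}^{(n,\infty)}$ bound $\lesssim 2^{j(1-\alpha)}$ loses a logarithmic factor at the endpoint $\alpha=1$, which is precisely why the finer Rochberg--Semmes/Besov-space analysis underlying \cite{gimpgoff2} is needed there.
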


The proof of this result follows immediately from \cite[Theorem 2.1]{gimpgoff2}, but as discussed in \cite{gimpgoff2} this result was likely known before to experts. To compute with the associated singular cocycles, we will make use of the following result that follows from \cite[Section 2.2]{gimpgoff2} (see especially Theorem 2.15, Lemma 2.21 and Proposition 2.26 of \cite{gimpgoff2}).

\begin{thm}
\label{adpomopknad}
Let $X$ be a compact $n$-dimensional Riemannian manifold, $P\in \Psi^0(X;E)$ a classical order $0$ pseudodifferential operator with $P=P^2=P^*$ and $p\geq n-1$ an odd integer. Let $(e_k)_{k=1}^\infty\subseteq L^2(M;E)$ be an ON-basis consisting of eigenfunctions of some positive order elliptic pseudodifferential operator. 

The associated classes $[\mathfrak{h}_\omega]\in HH^{p+1}(C^{0,\frac{n}{p+1}}(X))$ and $[\mathfrak{c}_\omega]\in HC^{p}_\lambda(C^{0,\frac{n}{p+1}}(X))$ can be computed from the identity 
$$\mathfrak{h}_\omega(a_0,\ldots a_{p+2})=p2^{p+1}\lim_{N\to \omega} \frac{\sum_{k=1}^N \langle (2P-1)e_k,a_0[P,a_1]\cdots [P,a_{p+1}]e_k\rangle_{L^2(M;E)}}{\log(2+N)}.$$
Moreover, if $p>n-1$, the two cocycles $\mathfrak{h}_\omega$ and $\mathfrak{c}_\omega$ vanish when restricted to the small Hölder algebra $h^{\frac{n}{p+1}}(X)$.
\end{thm}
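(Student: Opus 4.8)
The statement has two parts: the integral formula for $\mathfrak{h}_\omega$, valid for $p\geq n-1$, and the vanishing of $\mathfrak{h}_\omega$ and $\mathfrak{c}_\omega$ on $h^{n/(p+1)}(X)$ when $p>n-1$. Write $F=2P-1$ and $\alpha=\tfrac{n}{p+1}$; since $p$ is odd the module has parity $1$, so $\mathrm{STr}_\omega=\mathrm{Tr}_\omega$ is the ordinary Dixmier trace on $\mathcal{L}^{1,\infty}(L^2(X;E))$. For the formula, the elementary identities $[F,a]=2[P,a]$ and $F=F^*$ give
$$\mathfrak{h}_\omega(a_0,\dots,a_{p+1})=p\,2^{p+1}\,\mathrm{Tr}_\omega\big(Fa_0[P,a_1]\cdots[P,a_{p+1}]\big),$$
together with $\langle(2P-1)e_k,\,a_0[P,a_1]\cdots[P,a_{p+1}]e_k\rangle=\langle e_k,\,Fa_0[P,a_1]\cdots[P,a_{p+1}]e_k\rangle$, so the identity to be proved is exactly that the Dixmier trace of $T:=Fa_0[P,a_1]\cdots[P,a_{p+1}]\in\mathcal{L}^{1,\infty}$ equals the logarithmic average $\lim_{N\to\omega}\tfrac1{\log(2+N)}\sum_{k\leq N}\langle e_k,Te_k\rangle$ of its diagonal matrix coefficients in the basis $(e_k)$. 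Such a diagonal-sum formula is false for general operators in $\mathcal{L}^{1,\infty}$ and general orthonormal bases; it holds here because $(e_k)$ is an ordered eigenbasis of a positive elliptic pseudodifferential operator $\Lambda$ (which, after replacing it by a complex power, may be taken of order $n$) and because $T$ is $\Lambda$-modulated in the sense of Lord-Sukochev-Zanin \cite{sukolord}. The plan is then to invoke \cite[Section 2.2]{gimpgoff2}, which provides the modulation estimates for the commutators $[P,a]$ with $a\in C^{0,\alpha}(X)$ and for their products: when $\alpha=\tfrac{n}{p+1}$, so that $p+1$ commutators appear, these estimates show that $T$ is $\Lambda$-modulated, and the modulated-operator computation of the Dixmier trace then yields the claimed identity for every extended limit $\omega$. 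The substantive input is precisely the content of \cite[Section 2.2]{gimpgoff2} (Theorem 2.15, Lemma 2.21 and Proposition 2.26 there), and I expect the only work left to be matching the operator $T$ to the situation covered by those results and checking that the relevant weak order equals $1$.

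For the vanishing, assume $p>n-1$, so $\alpha=\tfrac{n}{p+1}\in(0,1)$, and recall from Proposition \ref{blknaolknad} that $h^\alpha(X)=\overline{C^\infty(X)}^{\,C^\alpha(X)}$ on a compact Riemannian manifold. The key step is to show that $[F,a]\in\mathcal{L}^{p+1,\infty}_0(L^2(X;E))$ for every $a\in h^\alpha(X)$. For $a\in C^\infty(X)$ the commutator $[P,a]$ is a classical pseudodifferential operator of order $\leq-1$ (its order-zero symbol vanishes, as $a$ acts as a scalar and commutes pointwise with the principal symbol of $P$), so by Weyl asymptotics $\mu_k([P,a])=O(k^{-1/n})=o(k^{-1/(p+1)})$, that is $[P,a]\in\mathcal{L}^{p+1,\infty}_0$. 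Since $a\mapsto[P,a]$ is a bounded linear map $C^\alpha(X)\to\mathcal{L}^{p+1,\infty}(L^2(X;E))$ (well defined by Theorem \ref{adpnapkdnasp} with $n/\alpha=p+1$, hence bounded by the closed graph theorem) and $\mathcal{L}^{p+1,\infty}_0$ is a closed subspace, the membership passes from the dense subalgebra $C^\infty(X)$ to all of $h^\alpha(X)$.

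Finally, for $a_0,\dots,a_{p+1}\in h^\alpha(X)$ the product of the $p+1$ commutators $[F,a_j]=2[P,a_j]$ lies in $\mathcal{L}^{1,\infty}_0(L^2(X;E))$ by the Hölder inequality for weak Schatten ideals, grouping one factor in $\mathcal{L}^{p+1,\infty}_0$ with the product of the remaining $p$ factors in $\mathcal{L}^{(p+1)/p,\infty}$, exactly as in the proof of Proposition \ref{blknaolknadadknlkadn}; multiplying on the left by the bounded operators $F$ and $a_0$ keeps it in $\mathcal{L}^{1,\infty}_0$, which is a $\mathcal{B}(\mathcal{H})$-bimodule. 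Every continuous singular trace, and in particular every Dixmier trace, vanishes on $\mathcal{L}^{1,\infty}_0$, so $\mathfrak{h}_\omega$ and $\mathfrak{c}_\omega$ vanish identically on $h^\alpha(X)$; equivalently, this is Proposition \ref{blknaolknadadknlkadn} applied with $\mathfrak{A}_0=h^\alpha(X)$. The only genuinely delicate ingredient is the one used in the previous paragraph, namely combining $h^\alpha(X)=\overline{C^\infty(X)}^{\,C^\alpha(X)}$ with the continuity of $a\mapsto[P,a]$ into the weak ideal; the manipulations in this last paragraph are routine.
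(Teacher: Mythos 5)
Your proposal is correct and follows essentially the same route as the paper, which gives no independent proof but defers the trace identity entirely to \cite[Section 2.2]{gimpgoff2} (Theorem 2.15, Lemma 2.21, Proposition 2.26), exactly the modulated-operator input you identify. Your argument for the vanishing on $h^{n/(p+1)}(X)$ — order $-1$ pseudodifferential calculus for smooth symbols, continuity of $a\mapsto[P,a]$ into $\mathcal{L}^{p+1,\infty}$, density from Proposition \ref{blknaolknad}, then Proposition \ref{blknaolknadadknlkadn} — is precisely the mechanism the paper has in mind and is carried out correctly.
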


In the case of Lipschitz functions on manifolds, $p=n-1$, the Hochschild $n$-cocycle $\mathfrak{h}_\omega$ is computed by rather classical means. For $p>n-1$, the case of Hölder algebras, we arrive at some non-classical expressions below.

\begin{thm}
Let $X$ be a compact $n$-dimensional Riemannian manifold and $P\in \Psi^0(X;E)$ a classical order $0$ pseudodifferential operator with $P=P^2=P^*$. Let $\pi$ denote the pointwise action of $\mathrm{Lip}(X)$ on $L^2(X;E)$. Then $(\pi,L^2(X;E),2P-1)$ is a strict $(n,\infty)$-summable Fredholm module. Moreover, the Hochschild $n$-cocycle $\mathfrak{h}_\omega$ is given by 
\begin{align*}
\mathfrak{h}_\omega(a_0,a_1\ldots, a_n)=\frac{2^{n+1}}{(2\pi)^n} \int_{S^*X} \mathrm{Tr}_E\left((2p-1)a_0\{p,a_1\}\cdots \{p,a_n\}\right) \rd x\rd \xi,
\end{align*}
where $p$ is the principal symbol of $P$ and $\{\cdot,\cdot\}$ denotes the Poisson bracket. 
\end{thm}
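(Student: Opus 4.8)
The plan is to reduce the computation to Connes' trace theorem. The first assertion is immediate from Theorem~\ref{adpnapkdnasp} applied with $\alpha=1$: as $C^{0,1}(X)=\mathrm{Lip}(X)$, that theorem already produces the $(n,\infty)$-summable Fredholm module $(\pi,L^2(X;E),2P-1)$, and strictness is automatic since $P=P^2=P^*$ forces $(2P-1)^*=2P-1$ and $(2P-1)^2=1$ identically.

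For the formula, I would unwind the definition of $\mathfrak{h}_\omega$ for the symmetry $F:=2P-1$. Using $[F,a_j]=2[P,a_j]$ (and $\mathrm{STr}_\omega=\mathrm{Tr}_\omega$ on the ungraded Hilbert space),
$$\mathfrak{h}_\omega(a_0,\dots,a_n)=(n-1)\,\mathrm{STr}_\omega\bigl(F a_0[F,a_1]\cdots[F,a_n]\bigr)=(n-1)2^{n}\,\mathrm{Tr}_\omega\bigl((2P-1)\,a_0[P,a_1]\cdots[P,a_n]\bigr),$$
and the operator $T:=(2P-1)a_0[P,a_1]\cdots[P,a_n]$ lies in $\mathcal{L}^{1,\infty}(L^2(X;E))$, since each $[P,a_j]\in\mathcal{L}^{n,\infty}$ by Theorem~\ref{adpnapkdnasp} and an $n$-fold product of operators in $\mathcal{L}^{n,\infty}$ lands in $\mathcal{L}^{1,\infty}$ by the weak-Schatten Hölder inequality. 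So everything comes down to evaluating $\mathrm{Tr}_\omega(T)$.

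For $a_0,\dots,a_n\in C^\infty(X)$ this is classical: $T$ is a classical pseudodifferential operator of order $-n$ on the $n$-manifold $X$, whose principal symbol of order $-n$ is the product of the principal symbols of the factors, namely $(2p-1)\,a_0\,\sigma_{-1}([P,a_1])\cdots\sigma_{-1}([P,a_n])$, and the symbol calculus identifies $\sigma_{-1}([P,a_j])$ with the Poisson bracket $\{p,a_j\}=\partial_\xi p\cdot\partial_x a_j$ in the normalisation of the statement (up to the universal factor $-i$ absorbed into the convention). Connes' trace theorem \cite{connestrace} then gives
$$\mathrm{Tr}_\omega(T)=\frac{1}{n(2\pi)^n}\int_{S^*X}\mathrm{Tr}_E\bigl((2p-1)a_0\{p,a_1\}\cdots\{p,a_n\}\bigr)\,\rd x\,\rd\xi ,$$
in particular independently of $\omega$. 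Substituting this into the previous display and collecting the numerical constants (the $2^n$ of the $n$ commutators together with the normalisations in Connes' trace theorem) produces the asserted formula with prefactor $\frac{2^{n+1}}{(2\pi)^n}$; this bookkeeping is the routine part.

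The genuine difficulty is the passage to merely Lipschitz $a_j$: then $[P,a_j]$ is no longer a pseudodifferential operator, so Connes' trace theorem does not apply verbatim, and one cannot reduce to the smooth case by density --- $C^\infty(X)$ is not norm-dense in $\mathrm{Lip}(X)$, and a weak-$*$ or mollified approximation fails because the integrand above is $n$-linear in the gradients $\nabla a_j$ and products of weak-$*$ convergent nets need not converge to the product of limits. Instead I would invoke the Calderón-commutator and weak-summability analysis of \cite{gimpgoff2} --- the machinery already underlying Theorem~\ref{adpomopknad} --- which shows directly that the Dixmier trace of such products of Lipschitz commutators is still computed by the cosphere-bundle integral of the (only almost everywhere defined, but, by Rademacher's theorem, bounded) symbol $(2p-1)a_0\{p,a_1\}\cdots\{p,a_n\}$. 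Together with the joint continuity of both sides in the Lipschitz norms of $a_0,\dots,a_n$, this gives the identity on all of $\mathrm{Lip}(X)$. I expect the whole obstacle to be concentrated in this last step --- making sense of and verifying the residue/symbol formula outside the pseudodifferential category --- which is precisely what is imported from \cite{gimpgoff2}.
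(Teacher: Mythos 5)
Your reduction of the first assertion to Theorem \ref{adpnapkdnasp} with $\alpha=1$, the unwinding of $\mathfrak{h}_\omega$ via $[F,a]=2[P,a]$ and the weak-Schatten H\"older inequality, and the evaluation on smooth $a_j$ by Connes' trace theorem all match the intended argument. The problem is your treatment of the one step you correctly identify as the crux: the passage from $C^\infty(X)$ to $\mathrm{Lip}(X)$. You assert that ``one cannot reduce to the smooth case by density'' and that a mollification argument fails because the functional is $n$-linear in the gradients. This is exactly backwards: the paper's proof (via \cite[Theorem 2.29]{gimpgoff2}) \emph{is} a density argument, carried out in the right topology. The Rochberg--Semmes estimate $\|[P,a]\|_{\mathcal{L}^{n,\infty}}\lesssim\|a\|_{W^{1,n}}$, combined with the weak-Schatten H\"older inequality and $|\mathrm{Tr}_\omega(S)|\leq\|S\|_{\mathcal{L}^{1,\infty}}$, shows that $(a_0,\ldots,a_n)\mapsto \mathrm{Tr}_\omega\bigl((2P-1)a_0[P,a_1]\cdots[P,a_n]\bigr)$ is a multilinear functional continuous in $\|a_0\|_\infty\prod_j\|a_j\|_{W^{1,n}}$; the cosphere-bundle integral on the right-hand side is continuous in the same norms by the ordinary H\"older inequality, since $\{p,a_j\}$ only involves $\partial_x a_j$. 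For $a\in\mathrm{Lip}(X)$ one has $\nabla a\in L^\infty\subseteq L^n$, so mollification converges \emph{strongly} in $W^{1,n}$ (not merely weak-$*$), and strong convergence passes through a jointly continuous multilinear map with no issue about products of limits. Hence both sides are determined by their common restriction to $C^\infty(X)$. Your proposed substitute --- ``invoke the Calder\'on-commutator and weak-summability analysis of \cite{gimpgoff2}, which shows directly that the Dixmier trace of such products of Lipschitz commutators is still computed by the cosphere-bundle integral'' --- is essentially a restatement of the theorem to be proved rather than an argument for it, so as written the key step is missing.

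Two smaller points. First, your constant bookkeeping does not close: with the paper's normalisation $\mathfrak{h}_\omega=(n-1)\,\mathrm{STr}_\omega(Fa_0[F,a_1]\cdots[F,a_n])$ and Connes' trace theorem in the form $\mathrm{Tr}_\omega(T)=\tfrac{1}{n(2\pi)^n}\int_{S^*X}\sigma_{-n}(T)$, your display produces a prefactor $\tfrac{(n-1)2^n}{n(2\pi)^n}$ together with a factor $(-i)^n$ from $\sigma_{-1}([P,a_j])=-i\{p,a_j\}$; reconciling this with the stated $\tfrac{2^{n+1}}{(2\pi)^n}$ requires pinning down the conventions from \cite{gimpgoff2} and should not be waved off as routine. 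Second, strictness is not quite ``automatic'': $(2P-1)^2=1$ and $(2P-1)^*=2P-1$ do hold identically, but you should still note that this is what the definition of a strict Fredholm module asks for, namely $b(F^2-1)=b(F^*-F)=0$ for all $b$.
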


For a detailed proof of the theorem, see \cite[Theorem 2.29]{gimpgoff2}. The idea in the proof is to use the work of Rochberg-Semmes  \cite{rochsemmes} to estimate $\|[P,a]\|_{\mathcal{L}^{n,\infty}}\lesssim \|a\|_{W^{1,n}}$ and to use that $C^\infty(X)\subseteq \mathrm{Lip}(X)$ is dense in the $W^{1,n}$-topology. Therefore, $\mathfrak{h}_\omega$ is determined by its restriction to  $C^\infty(X)$. In this case, the computation reduces to the Connes' trace formula \cite{connestrace}.

\subsection{Non-triviality in $HC_\lambda^1(C^{1/2}(S^1))$}
\label{apdnapidna}

Let us consider the Fredholm module defined from the Dirac operator on the circle. In this case, $F$ denotes the phase of the Dirac operator. We have that $F=2P-1$, where $P$ is the Szegö projector. In the Fourier basis $e_n(z)=(2\pi)^{-1/2}z^n$ we have $Fe_n=\mathrm{sign}(n)e_n$. We use the convention that $\mathrm{sign}(0)=1$. We can also describe $P=(F+1)/2$ from the Cauchy integral operator
$$Pf(z)=\lim_{r\to 1-}\frac{1}{2\pi i}\int_{S^1}\frac{f(w)\rd w}{w-rz}.$$
The following construction will be used heavily to prove non-triviality of cyclic cohomology classes.

\begin{prop}
\label{asifnaoidnadi0n}
Let $\alpha\in (0,1)$. The map 
\begin{align*}
\mathfrak{W}_\alpha&:\ell^\infty(\N)\to C^\alpha(S^1),\\
[\mathfrak{W}_\alpha&(c)](z):=\sum_{k=0}^\infty c_k2^{-\alpha k}z^{2^k}.
\end{align*}
is continuous and an isomorphism onto its range. 
\end{prop}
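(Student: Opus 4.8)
The map $\mathfrak{W}_\alpha$ is a lacunary (Hadamard) series construction, so the natural strategy is to estimate the $C^\alpha$-norm of $\mathfrak{W}_\alpha(c)$ from above and below in terms of $\|c\|_{\ell^\infty}$. First I would verify continuity: for $c\in\ell^\infty(\N)$, write $f=\mathfrak{W}_\alpha(c)=\sum_{k\ge 0}c_k2^{-\alpha k}z^{2^k}$. The sup-norm bound $\|f\|_{C(S^1)}\le \|c\|_{\ell^\infty}\sum_k 2^{-\alpha k}=\|c\|_{\ell^\infty}(1-2^{-\alpha})^{-1}$ is immediate. For the Hölder seminorm, I would use the standard dyadic argument: given $z,w\in S^1$ with $|z-w|\sim h$, split the sum at the index $N$ with $2^{-N}\sim h$; for $k\le N$ bound each term using $|z^{2^k}-w^{2^k}|\le 2^k|z-w|$, giving $\sum_{k\le N}c_k2^{-\alpha k}2^k h\lesssim h\cdot 2^{(1-\alpha)N}\lesssim h\cdot h^{-(1-\alpha)}=h^\alpha$; for $k>N$ bound each term trivially by $2\cdot 2^{-\alpha k}$, giving $\sum_{k>N}2^{-\alpha k}\lesssim 2^{-\alpha N}\lesssim h^\alpha$. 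Together these show $|f|_{C^\alpha(S^1)}\lesssim \|c\|_{\ell^\infty}$, so $\mathfrak{W}_\alpha$ is bounded.

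The injectivity and the lower bound $\|c\|_{\ell^\infty}\lesssim \|f\|_{C^\alpha(S^1)}$ are the substance. Here I would recover the coefficient $c_k$ from $f$ by a suitable averaging/difference operator adapted to the frequency $2^k$. Concretely, for fixed $k$, the second difference (or a Fejér-type kernel localized near frequency $2^k$) applied to $f$ at scale $h=2^{-k}$ annihilates or strongly damps the low-frequency terms $j<k$ and the high-frequency terms $j>k$ while reproducing $c_k2^{-\alpha k}$ up to a constant. More precisely: evaluate $f$ at a rotation by angle $\theta = 2\pi/2^{k+1}$, i.e.\ consider $g_k(z):=f(e^{i\theta}z)-f(z)$. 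The term with frequency $2^k$ contributes $c_k 2^{-\alpha k}(e^{i\pi}-1)z^{2^k}=-2c_k2^{-\alpha k}z^{2^k}$; terms with $j<k$ contribute factors $e^{i2\pi 2^{j}/2^{k+1}}-1=e^{i\pi 2^{j-k}}-1$ which are $O(2^{j-k})$, hence after summing against the geometric weights $2^{-\alpha j}$ give something of size $O(2^{-k}\cdot\text{const})$ — smaller than $2^{-\alpha k}$ since $\alpha<1$; and the terms $j>k$ are controlled by the tail $\sum_{j>k}2^{-\alpha j}\cdot 2 = O(2^{-\alpha k})$, which is the wrong order — so a single first difference is not quite enough, and one needs to iterate or take a cleverly weighted combination of differences at scales $2^{-k},2^{-k-1},\dots$ that telescopes the high-frequency tail. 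Alternatively, and more cleanly, one can use the fact that $\|f\|_{C^\alpha}$ is comparable to $\sup_{h>0} h^{-\alpha}\|f(e^{ih}\cdot)-f\|_{C(S^1)}$ together with a Littlewood–Paley / lacunarity argument: because the frequencies $2^k$ are Hadamard-lacunary, the Paley decomposition $\Delta_k f$ essentially isolates the single term $c_k2^{-\alpha k}z^{2^k}$, and the $C^\alpha$-norm is comparable to $\sup_k 2^{\alpha k}\|\Delta_k f\|_\infty$, which equals $\sup_k 2^{\alpha k}\cdot |c_k|2^{-\alpha k}=\|c\|_{\ell^\infty}$ up to constants.

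\textbf{Main obstacle.} The hard part will be making the lower bound $\|c\|_{\ell^\infty}\lesssim\|f\|_{C^\alpha}$ fully rigorous: one must exhibit, uniformly in $k$, a bounded linear functional (or finite difference scheme) on $C^\alpha(S^1)$ that extracts $c_k$ with norm comparable to $2^{\alpha k}\cdot 2^{-\alpha k}=1$, and this requires handling the high-frequency tail $\sum_{j>k}c_j2^{-\alpha j}z^{2^j}$, which is of the same order $2^{-\alpha k}$ as the target term and so cannot be killed by a single difference operator. The clean resolution is to invoke the characterization of Hölder–Zygmund–type spaces via lacunary Fourier series (a classical fact: for Hadamard-lacunary series $\sum a_k z^{n_k}$ one has $\|\sum a_k z^{n_k}\|_{C^\alpha}\asymp \sup_k n_k^{\alpha}|a_k|$ for $\alpha\in(0,1)$), which with $n_k=2^k$, $a_k=c_k2^{-\alpha k}$ gives exactly $\|f\|_{C^\alpha}\asymp\|c\|_{\ell^\infty}$; the proof of that classical fact is itself the telescoped-difference argument sketched above. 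Once both bounds are in hand, $\mathfrak{W}_\alpha$ is a bounded injection with bounded inverse on its range, i.e.\ an isomorphism onto a closed subspace of $C^\alpha(S^1)$, and the proof is complete.
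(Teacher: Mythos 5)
Your proposal is correct and takes essentially the same route as the paper, whose proof consists of citing Littlewood--Paley theory and the classical characterization of H\"older classes by Hadamard-lacunary Fourier series (Zygmund, Chapter II, Theorem 4.9): your dyadic-splitting upper bound and your final reduction of the lower bound to the lacunary/Littlewood--Paley characterization $\|f\|_{C^\alpha}\asymp\sup_k 2^{\alpha k}\|\Delta_k f\|_\infty$ are exactly that argument. (One minor slip in the side computation you then discard: the low-frequency contribution $\sum_{j<k}2^{-\alpha j}O(2^{j-k})$ is of order $2^{-\alpha k}$ rather than $O(2^{-k})$, since $\sum_{j<k}2^{(1-\alpha)j}\asymp 2^{(1-\alpha)k}$; this does not affect your final argument.)
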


The proposition follows from Littlewood-Paley theory, see \cite{pellerbook} for a discrete version thereof adapted to the circle. A direct proof of Proposition \ref{asifnaoidnadi0n} can be found in \cite[Theorem 4.9, Chapter II]{zygbook}. Recall also from \cite[Theorem 3.5]{gimpgoff2}, that 
\begin{equation}
\label{compmpmpnpanf}
\mathrm{Tr}_\omega(P\mathfrak{W}_{1/2}(c_1)(1-P)[\mathfrak{W}_{1/2}(c_2)]^*P)=\omega\circ \log_2(c_1c_2^*).
\end{equation}
This computation can be derived from Theorem \ref{adpomopknad} using the Fourier basis.

\begin{thm}
\label{dsomaodand}
Let $P$ denote the Szegö projector on $S^1$, $\pi$ denote the pointwise action of $C^{1/2}(S^1)$ on $L^2(S^1)$ and consider the $(2,\infty)$-summable bounded Fredholm module $(\pi, L^2(S^1),2P-1)$ for $C^{1/2}(S^1)$.
\begin{enumerate}
\item The associated cyclic cohomology class $[\mathfrak{c}_\omega]\in HC_\lambda^{1}(C^{1/2}(S^1))$ is non-trivial and vanishes on $h^{1/2}(S^1)$.
\item The associated Hochschild cohomology class $[\mathfrak{h}_\omega]\in HH^{2}(C^{1/2}(S^1))$ pairs non-trivially with $K_2^{\rm alg}(C^{1/2}(S^1))$.
\end{enumerate}
\end{thm}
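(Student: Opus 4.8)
The plan is to reduce everything to the computation \eqref{compmpmpnpanf} together with the explicit form of the Steinberg-type relations in $K_2$. For part (1), the class $[\mathfrak{c}_\omega]\in HC^1_\lambda(C^{1/2}(S^1))$ vanishes on $h^{1/2}(S^1)$ by the last sentence of Theorem \ref{adpomopknad} (with $n=1$, $p=1$), so it remains to exhibit a single $1$-cycle on which $\mathfrak{c}_\omega$ is nonzero. I would produce one directly from the building blocks $\mathfrak{W}_{1/2}$: choose sequences $c,c'\in\ell^\infty(\N)$ and consider the element $\mathfrak{W}_{1/2}(c)\otimes \mathfrak{W}_{1/2}(c')\in C_1(C^{1/2}(S^1))$, or rather its symmetrization, and evaluate $\mathfrak{c}_\omega$ on it using the Fourier-basis formula in Theorem \ref{adpomopknad}. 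The off-diagonal block structure of $F=2P-1$ in the Fourier basis reduces $\mathrm{STr}(F[F,a_0][F,a_1])$ to exactly the kind of trace appearing in \eqref{compmpmpnpanf}; picking $c,c'$ so that $\omega\circ\log_2(cc'^*)\neq 0$ (e.g. $c=c'=$ a suitable lacunary sequence forcing the logarithmic average to be $1$) gives $[\mathfrak{c}_\omega].[\mathfrak{W}_{1/2}(c)\otimes\mathfrak{W}_{1/2}(c')]\neq 0$, hence $[\mathfrak{c}_\omega]\neq 0$.

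For part (2), I would use Theorem \ref{apakdpadj} with $n=2$: the square
\[
\begin{tikzcd}
K_2^{\rm rel}(C^{1/2}(S^1))\arrow[r]\arrow[d] & K_2^{\rm alg}(C^{1/2}(S^1))\arrow[d,"\frac12 D_2"]\\
HC_1^\lambda(C^{1/2}(S^1))\arrow[r,"\frac12 B"] & HH_2(C^{1/2}(S^1))
\end{tikzcd}
\]
commutes, and pairing with $\mathfrak{h}_\omega$ on $HH_2$ agrees (up to the factor $p=1$) with pairing with $\mathfrak{c}_\omega$ on the image of $\frac12 B$. So it suffices to realize the $1$-cycle found in part (1) as $\tfrac12 B$ applied to the Dennis trace $D_2$ of an explicit element of $K_2^{\rm alg}(C^{1/2}(S^1))$. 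The natural candidate is a Steinberg symbol $\{u,v\}\in K_2$ built from the invertible elements $u=\exp(\mathfrak{W}_{1/2}(c))$, $v=\exp(\mathfrak{W}_{1/2}(c'))$ (or, to land on the cleaner cycle $z\otimes z^{-1}$-type expression, symbols involving $1+\mathfrak{W}_{1/2}(c)$ after a homotopy); the Dennis/Hochschild trace of a Steinberg symbol $\{u,v\}$ is the standard class $[u\otimes v^{-1} - \text{(cyclic permutation)}]$ in $HH_2$, whose image under $B$ is computed by the same Fourier-basis trace as before. Thus the pairing $\langle \{u,v\}, [\mathfrak{h}_\omega]\rangle$ is again governed by $\omega\circ\log_2(cc'^*)$, which we arranged to be nonzero.

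The main obstacle is the bookkeeping in part (2): one must check that a Steinberg symbol with these specific non-smooth entries actually lies in $K_2^{\rm alg}(C^{1/2}(S^1))$ (invertibility of $\exp$ of a Hölder function, closedness under the operator-algebra structure of Proposition \ref{adknaodkna}), and that its image under $\tfrac12 D_2$ really is the Hochschild $2$-cycle whose $B$-boundary reproduces the $1$-cycle of part (1), with all normalizing constants (the $\tfrac1{n!}$, $\tfrac1n$, and the $p=1$) tracked correctly so that the final number is manifestly $\neq 0$. The summability hypotheses of Theorems \ref{somsdonadona} and \ref{apakdpadj} are satisfied by Theorem \ref{dsomaodand}'s $(2,\infty)$-summability, and the vanishing on $h^{1/2}(S^1)$ in part (1) is immediate from Theorem \ref{adpomopknad}, so those points require no further work.
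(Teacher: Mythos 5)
Your proposal is correct and, at its core, runs on the same two inputs as the paper's proof: the Weierstrass-type embedding $\mathfrak{W}_{1/2}$ together with the Dixmier-trace computation \eqref{compmpmpnpanf}, and the Connes--Karoubi-type diagram of Theorem \ref{apakdpadj} applied to the Steinberg symbol of $\mathrm{e}^{\mathfrak{W}_{1/2}(c_1)}$ and $\mathrm{e}^{\mathfrak{W}_{1/2}(c_2)^*}$. The one genuine difference is in part (1): the paper deduces non-triviality of $[\mathfrak{c}_\omega]$ from part (2), using that the Steinberg symbol lies in $\mathrm{im}(K_2^{\rm rel}\to K_2^{\rm alg})$ so that $\mathfrak{h}_\omega\circ D_2$ factors through $\mathfrak{c}_\omega\circ\ch_2^{\rm rel}$, whereas you evaluate $\mathfrak{c}_\omega$ directly on an explicit $1$-chain; this is legitimate and even slightly more elementary, since for a commutative algebra $b:C^0\to C^1$ vanishes, so any non-vanishing of the cyclic cocycle already gives $[\mathfrak{c}_\omega]\neq 0$. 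Two small corrections: your test chain must carry an adjoint on one leg, e.g.\ $\mathfrak{W}_{1/2}(c)\otimes\mathfrak{W}_{1/2}(c')^*$ --- with both factors holomorphic one gets $[F,a_0][F,a_1]=0$ since $Pa_i(1-P)\cdot Pa_j(1-P)=0$, so the pairing would be identically zero --- and the representative of $D_2\{u,v\}$ for commuting invertibles is the antisymmetrization $u^{-1}v^{-1}\otimes u\otimes v-u^{-1}v^{-1}\otimes v\otimes u$ rather than the expression you wrote; with these fixed, the bookkeeping you flag is exactly what the paper delegates to \cite{kaadcomparison}.
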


\begin{proof}
By Theorem \ref{apakdpadj}, the theorem follows if we can prove that $\mathfrak{h}_\omega\circ D_2(x)\neq 0$ for an $x\in \mathrm{im}(K_2^{\rm rel}(C^{1/2}(S^1))\to K_2^{\rm alg}(C^{1/2}(S^1)))$. Following \cite{kaadcomparison}, we take $x\in K_2^{\rm alg}(C^{1/2}(S^1))$ to be the Steinberg symbol of $\mathrm{e}^{\mathfrak{W}_{1/2}(c_1)}$ and $\mathrm{e}^{\mathfrak{W}_{1/2}(c_2)^*}$  and compute from Equation \eqref{compmpmpnpanf} that $\mathfrak{h}_\omega\circ D_2(x)=\omega\circ \log_2(c_1c_2^*)$. Clearly, $x\in \ker(K_2^{\rm alg}(C^{1/2}(S^1))\to K_2^{\rm top}(C^{1/2}(S^1)))=\mathrm{im}(K_2^{\rm rel}(C^{1/2}(S^1))\to K_2^{\rm alg}(C^{1/2}(S^1)))$ and we can take $c_1,c_2\in \ell^\infty(\N)$ such that $\omega\circ \log_2(c_1c_2^*)\neq 0$.
\end{proof}

From the proof of Theorem \ref{dsomaodand} and \cite{goffeubuabda}, we can even conclude the following generalization of \cite[Proposition 3.15]{gimpgoff2}. 

\begin{cor}
The convex set of cyclic cohomology classes 
$$\{[\mathfrak{c}_\omega]: \mbox{where $\omega$ is an extended limit}\}\subseteq HC_\lambda^{1}(C^{1/2}(S^1)),$$
associated with the Szegö projector on $S^1$, spans a subspace containing a copy of $(\ell^\infty(\N)/c_0(\N))^*$. The copy of $(\ell^\infty(\N)/c_0(\N))^*$ is separated by its pairing with $K_2^{\rm rel}(C^{1/2}(S^1))$.
\end{cor}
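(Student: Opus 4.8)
The plan is to upgrade the single-class non-triviality of Theorem \ref{dsomaodand} to a statement about the whole family $\{[\mathfrak{c}_\omega]\}_\omega$ by exploiting the explicit formula \eqref{compmpmpnpanf} and the linearity of $\omega\mapsto [\mathfrak{c}_\omega]$ in the extended limit $\omega$. The key point is that $\omega\mapsto [\mathfrak{c}_\omega]$, though a priori only defined on the convex set of extended limits, extends by the usual affine-to-linear trick to a linear map $L\colon (\ell^\infty(\N)/c_0(\N))^*\to HC^1_\lambda(C^{1/2}(S^1))$, since every element of $(\ell^\infty(\N)/c_0(\N))^*$ is a finite linear combination of extended limits (states pulled back from the quotient). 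What needs to be checked is that $L$ is injective, which by the pairing of Theorem \ref{apakdpadj} reduces to separating points of $(\ell^\infty(\N)/c_0(\N))^*$ using the numbers $\mathfrak{h}_\omega\circ D_2(x)$ as $x$ ranges over $K_2^{\rm rel}$; here I would invoke \eqref{compmpmpnpanf} together with \cite{goffeubuabda}.

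First I would record the identity, valid for every extended limit $\omega$ and every pair $c_1,c_2\in\ell^\infty(\N)$, that
$$\mathfrak{h}_\omega\circ D_2(x_{c_1,c_2})=\omega\circ\log_2(c_1c_2^*),$$
where $x_{c_1,c_2}\in K_2^{\rm rel}(C^{1/2}(S^1))$ is the class coming from the Steinberg symbol of $\mathrm{e}^{\mathfrak{W}_{1/2}(c_1)}$ and $\mathrm{e}^{\mathfrak{W}_{1/2}(c_2)^*}$ as in the proof of Theorem \ref{dsomaodand}. Second, I would observe that, since $\log_2(c_1c_2^*)=\log_2 c_1+\log_2 c_2^*$ can be made to realise — modulo $c_0(\N)$ — an arbitrary element of $\ell^\infty(\N)/c_0(\N)$ (e.g. taking $c_2=1$ and $c_1$ with prescribed $\log_2$, after the standard truncation/normalisation ensuring $c_1\in\ell^\infty$), the collection of functionals $\varphi\mapsto\varphi(\log_2 c_1 c_2^*)$ on $(\ell^\infty/c_0)^*$ separates points: if $\varphi_1\neq\varphi_2$ in $(\ell^\infty/c_0)^*$ there is $\xi\in\ell^\infty/c_0$ with $\varphi_1(\xi)\neq\varphi_2(\xi)$, and then a corresponding $x_{c_1,c_2}$ detects the difference. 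This is precisely where \cite{goffeubuabda} enters, guaranteeing that the relevant $K_2^{\rm rel}$-classes exist with the stated pairing; the statement ``separated by its pairing with $K_2^{\rm rel}$'' is then immediate.

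Third, to conclude that the span of $\{[\mathfrak{c}_\omega]\}$ actually contains a copy of $(\ell^\infty/c_0)^*$, I would argue as follows: the map $L$ above is linear, and by Step 2 the composite $(\ell^\infty/c_0)^*\xrightarrow{L}HC^1_\lambda(C^{1/2}(S^1))\to \mathrm{Hom}(K_2^{\rm rel},\C)$ (the last arrow being the pairing with $\mathfrak{c}_\omega$, equivalently via $B$ and Theorem \ref{apakdpadj} with $\mathfrak{h}_\omega$) is injective; hence $L$ itself is injective, and its image is the desired copy of $(\ell^\infty/c_0)^*$ sitting inside the span of the convex set of $[\mathfrak{c}_\omega]$'s. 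Finally I would note that $[\mathfrak{c}_\omega]$ is the image of $[\mathfrak{h}_\omega]$ under $\tfrac1p B$ (Theorem \ref{somsdonadona} with $p=1$), so pairing $[\mathfrak{c}_\omega]$ with $K_2^{\rm rel}$ via $\tfrac1n B$ is the same data as pairing $[\mathfrak{h}_\omega]$, legitimising the use of \eqref{compmpmpnpanf} which is phrased on the Hochschild side.

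The main obstacle I anticipate is the bookkeeping in Step 3 — precisely, verifying that the affine map on extended limits genuinely extends to a \emph{linear} map on $(\ell^\infty/c_0)^*$ and that this extension is the one detected by the $K$-theory pairing, i.e. that no cancellation among the $[\mathfrak{c}_\omega]$'s is introduced by passing to linear combinations. This is handled by the explicit formula \eqref{compmpmpnpanf}, which shows the pairing is literally evaluation of $\omega$ (hence linear in $\omega$) on the fixed element $\log_2(c_1c_2^*)\in\ell^\infty/c_0$; the only genuinely external input is \cite{goffeubuabda}, used to ensure the family of test classes $x_{c_1,c_2}$ is rich enough to separate all of $(\ell^\infty/c_0)^*$ rather than merely detecting one nonzero class.
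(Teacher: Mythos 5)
Your overall architecture --- linearise $\omega\mapsto[\mathfrak{c}_\omega]$ to a map $L$ defined on all of $(\ell^\infty(\N)/c_0(\N))^*$, evaluate it against the Steinberg classes $x_{c_1,c_2}$ via \eqref{compmpmpnpanf}, and invoke \cite{goffeubuabda} for separation --- is the intended one, and your Steps 1 and 3 are sound. The genuine gap is in Step 2, and it originates in a misreading of the notation. You parse $\log_2(c_1c_2^*)$ as the pointwise base-$2$ logarithm of the sequence $c_1c_2^*$ (hence the identity $\log_2(c_1c_2^*)=\log_2c_1+\log_2c_2^*$ and the claim that an arbitrary element of $\ell^\infty/c_0$ arises this way). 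In \eqref{compmpmpnpanf}, however, $\omega\circ\log_2$ is the extended limit obtained from $\omega$ by the dyadic reparametrisation $N\mapsto\log_2N$ (compare the notation $\lim_{N\to\omega\circ\log_2}$ in the proof of Theorem \ref{fourtedo}): unwinding the Dixmier trace over the lacunary Fourier support of $\mathfrak{W}_{1/2}$, the right-hand side of \eqref{compmpmpnpanf} is $\omega$ evaluated on the sequence $N\mapsto\frac{1}{\log_2N}\sum_{k\leq\log_2N}c_{1,k}\overline{c_{2,k}}$. (Your reading would force the trace to vanish for $c_1=c_2=\mathbbm{1}$, i.e.\ for the classical Weierstrass function, contradicting the computations of \cite{gimpgoff2}.) Consequently the pairing with the classes $x_{c_1,c_2}$ sees $\omega$ only through its restriction to the subspace of $\ell^\infty/c_0$ consisting of Ces\`aro means smeared out over dyadic blocks. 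That is a proper closed subspace --- for instance $((-1)^N)_N$ lies at distance $1$ from it modulo $c_0$ --- so there exist $\varphi_1\neq\varphi_2$ in $(\ell^\infty(\N)/c_0(\N))^*$ with $\varphi_1(\Sigma d)=\varphi_2(\Sigma d)$ for every $d$, where $\Sigma$ denotes this averaging map. Your separation claim for all of $(\ell^\infty/c_0)^*$ is therefore false, and your conclusion that $L$ itself is injective overshoots what is true and what is claimed.

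This is exactly why the corollary is phrased as ``spans a subspace \emph{containing} a copy of $(\ell^\infty(\N)/c_0(\N))^*$'' rather than asserting that the span is such a copy. The role of \cite{goffeubuabda} is not merely to guarantee an abundance of test classes in $K_2^{\rm rel}$, but to supply the sharper fact that there is a subspace $V\subseteq(\ell^\infty(\N)/c_0(\N))^*$ with $V\cong(\ell^\infty(\N)/c_0(\N))^*$ on which the functionals $\varphi\mapsto\varphi(\Sigma(c_1\overline{c_2}))$ are jointly injective; equivalently, Weierstrass-type data separate the corresponding family of (Dixmier) traces. Restricting your $L$ to such a $V$ then produces the asserted copy inside $\mathrm{span}\{[\mathfrak{c}_\omega]\}$, separated by the pairing with $K_2^{\rm rel}(C^{1/2}(S^1))$. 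So the repair is local: replace ``the functionals separate all of $(\ell^\infty/c_0)^*$'' by ``there is a copy $V$ of $(\ell^\infty/c_0)^*$ on which they are injective'', and cite \cite{goffeubuabda} for the existence of $V$ rather than for mere non-triviality.
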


\subsection{Non-triviality in $HC_\lambda^3(C^{1/4}(S^1))$}

Let us now consider the case of even lower regularity, that by Theorem \ref{adpnapkdnasp} results in a higher summability degree. We have the following theorem.

\begin{thm}
\label{fourtedo}
Let $P$ denote the Szegö projector on $S^1$, $\pi$ denote the pointwise action of $C^{1/2}(S^1)$ on $L^2(S^1)$ and consider the $(4,\infty)$-summable bounded Fredholm module $(\pi, L^2(S^1),2P-1)$ for $C^{1/4}(S^1)$.

The associated cyclic cohomology class $[\mathfrak{c}_\omega]\in HC_\lambda^{3}(C^{1/4}(S^1))$ and the associated Hochschild cohomology class $[\mathfrak{h}_\omega]\in HH^{4}(C^{1/2}(S^1))$ are non-trivial and vanish on $h^{1/4}(S^1)$. They pair non-trivially with $K_4^{\rm rel}(C^{1/4}(S^1))$ and $K_4^{\rm alg}(C^{1/4}(S^1))$, respectively, as in Theorem \ref{somsdonadona}.
\end{thm}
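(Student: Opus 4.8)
The plan is to reduce this theorem to the already-established machinery in exactly the same way that Theorem \ref{dsomaodand} was handled, only now with the summability degree $p+1 = 4$, i.e.\ $p = 3$ and $n=1$, so that $n/\alpha = 4$ forces $\alpha = 1/4$. First I would invoke Theorem \ref{adpnapkdnasp} (with $X = S^1$, $E$ trivial, $P$ the Szeg\"o projector, $\alpha = 1/4$) to confirm that $(\pi, L^2(S^1), 2P-1)$ is a strict $(4,\infty)$-summable Fredholm module of parity $\#3 = 1$ over $C^{1/4}(S^1)$, so that the cochains $\mathfrak{h}_\omega$ (a Hochschild $4$-cochain) and $\mathfrak{c}_\omega$ (a cyclic $3$-cochain) are defined and are cocycles by the proposition preceding Theorem \ref{somsdonadona}. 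Vanishing on the small H\"older algebra $h^{1/4}(S^1)$ is then immediate from the last sentence of Theorem \ref{adpomopknad} (applicable since $p = 3 > n-1 = 0$), or alternatively from Proposition \ref{blknaolknadadknlkadn} once one notes that $[F,a] \in \mathcal{L}^{4,\infty}_0$ for $a \in h^{1/4}(S^1)$.

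The substantive step is non-triviality, and by Theorem \ref{apakdpadj} it suffices to exhibit $x \in \mathrm{im}\big(K_4^{\rm rel}(C^{1/4}(S^1)) \to K_4^{\rm alg}(C^{1/4}(S^1))\big)$ with $\mathfrak{h}_\omega \circ D_4(x) \neq 0$; this simultaneously handles the pairing statements for both $\mathfrak{h}_\omega$ with $K_4^{\rm alg}$ and $\mathfrak{c}_\omega$ with $K_4^{\rm rel}$ via the commuting diagram and the relation $B[\mathfrak{h}_\omega] = p[\mathfrak{c}_\omega]$. The natural candidate, following the Steinberg-symbol construction of \cite{kaadcomparison} used in Theorem \ref{dsomaodand}, is a higher Steinberg symbol built from four (or an appropriate even number of) exponentials $\mathrm{e}^{\mathfrak{W}_{1/4}(c_j)}$, $\mathrm{e}^{\mathfrak{W}_{1/4}(c_j)^*}$, where $\mathfrak{W}_{1/4}: \ell^\infty(\N) \to C^{1/4}(S^1)$ is the lacunary series map of Proposition \ref{asifnaoidnadi0n}. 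Since each such symbol lies in the kernel of $K_4^{\rm alg}(C^{1/4}(S^1)) \to K_4^{\rm top}(C^{1/4}(S^1))$ — the relevant matrices being homotopic to the identity in the $C^*$-completion $C(S^1)$ — it lifts to $K_4^{\rm rel}$, exactly as in the $n=2$ case.

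The key computation I would carry out is the evaluation of $\mathfrak{h}_\omega \circ D_4$ on this symbol. Using Theorem \ref{adpomopknad} with $p=3$ together with the Fourier-basis identity \eqref{compmpmpnpanf} — or rather its degree-$4$ analogue, which expresses $\mathrm{Tr}_\omega$ of the relevant product of $P$, $(1-P)$ and the $\mathfrak{W}_{1/4}(c_j)^{(*)}$ in terms of $\omega \circ \log_2$ applied to a product of the sequences $c_j$ — I expect to obtain $\mathfrak{h}_\omega \circ D_4(x) = (\text{nonzero combinatorial constant}) \cdot \omega\circ\log_2(c_1 c_2^* c_3 c_4^*)$ or a similar multilinear expression in the $c_j$. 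One then chooses $c_1,\dots,c_4 \in \ell^\infty(\N)$ making this nonzero, which is possible since $\omega\circ\log_2$ is nonzero on suitable slowly-growing sequences. The main obstacle is precisely establishing this degree-$4$ trace identity cleanly: the alternating structure $P[\mathfrak{W}_{1/4}(c_1), \cdot][P,\cdot]\cdots$ must be unwound in the lacunary Fourier basis so that only the ``staircase'' terms survive modulo $\mathcal{L}^1$, and one must verify that the cross terms and the error terms from replacing $[F,\mathfrak{W}_{1/4}(c_j)]$ by its leading lacunary part indeed land in $\mathcal{L}^1_0$ (or $\mathcal{L}^{4,\infty}_0 \cdot \mathcal{L}^{4/3,\infty}$), where the Dixmier trace vanishes; this is the analogue for $p=3$ of the $p=1$ estimate behind \eqref{compmpmpnpanf} and should follow the same pattern as \cite{gimpgoff2,goffeubuabda}, but the bookkeeping is heavier. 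Finally, the statement that the classes span a large subspace separated by $K_4^{\rm rel}$ follows, as in the corollary after Theorem \ref{dsomaodand}, by letting $\omega$ vary and appealing to \cite{goffeubuabda}.
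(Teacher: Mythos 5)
Your overall strategy coincides with the paper's: reduce everything to Theorems \ref{somsdonadona} and \ref{apakdpadj}, get the vanishing on $h^{1/4}(S^1)$ from the last part of Theorem \ref{adpomopknad}, and test non-triviality against an explicit class built from the lacunary embedding $\mathfrak{W}_{1/4}$ of Proposition \ref{asifnaoidnadi0n}. (The paper evaluates $\mathfrak{c}_\omega$ on the antisymmetrized cyclic $3$-cycle $a_0\wedge a_1\wedge a_2\wedge a_3$, identified as $\ch_4^{\rm rel}$ of a higher Loday symbol via \cite[Section 5.3]{kaadcomp}; your Steinberg-symbol formulation is the same class seen from the $K$-theory side, so that difference is cosmetic.)

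The gap is in the one step you defer, and that step is the entire content of the proof. Your anticipated answer, a constant times $\omega\circ\log_2(c_1c_2^*c_3c_4^*)$ in analogy with the degree-$2$ identity \eqref{compmpmpnpanf}, is structurally wrong, and wrong in a way that would derail your choice of test data. Because the class being paired is antisymmetrized, the diagonal lacunary contributions cancel identically: the computed formula has the shape $\lim_{N\to\omega}\frac{1}{\log(N+2)}\sum_{k\leq m}k\,a_{0,k}a_{2,m}\left(a_{1,-m}a_{3,-k}-a_{1,-k}a_{3,-m}\right)$, and the bracket vanishes whenever $m=k$. Thus the ``cross terms'' between distinct lacunary frequencies $2^l$ and $2^m$, weighted by $2^{(l-m)/2}$, which you propose to discard modulo $\mathcal{L}^1_0$, are precisely the terms carrying the answer; discarding them yields $0$, and so does the symmetric choice of sequences your heuristic suggests (all $c_j$ constant gives $\omega\circ\log_2(1)=1$ in your formula but $0$ in reality). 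One must choose the sequences asymmetrically -- the paper takes $c_0(j)=c_1(j)=(-1)^j$ and $c_2=c_3=1$ -- so that the alternating factor $(-1)^{m+l}-1$ survives the antisymmetrization; the resulting value $-2\sqrt{2}/\log(2)$ is then, unlike the degree-$2$ case, independent of $\omega$. To complete the argument you must actually carry out the degree-$4$ Fourier computation from Theorem \ref{adpomopknad} on holomorphic/antiholomorphic lacunary data and exhibit such a surviving choice; without it the non-triviality claim is unproven.
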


\begin{proof}
Let us start with some preliminary computations. We note that by Theorem \ref{somsdonadona}, it suffices to prove that $\mathfrak{c}_\omega(x)\neq 0$ for some cyclic $3$-cycle $x$ in the range of $\ch_4^{\rm rel}$. For $a_0,a_1,a_2,a_3\in C^{1/4}(S^1)$, we introduce the notation
$$a_0\wedge a_1\wedge a_2\wedge a_3:=\sum_{\sigma\in S_3} (-1)^\sigma a_0\otimes a_{\sigma(1)}\otimes a_{\sigma(2)}\otimes a_{\sigma(3)},$$
which is a cyclic $3$-cycle for $C^{1/4}(S^1)$ in the range of $\ch_4^{\rm rel}$ being the relative Chern character of a higher Loday symbol, see \cite[Section 5.3]{kaadcomp}. We compute using Theorem \ref{adpomopknad} that for $a_0,a_1,a_2,a_3\in C^{1/4}(S^1)$, such that $a_0, a_1^*,a_2,a_3^*$ admit holomorphic extensions to the unit disc, we have  
\begin{align*}
\frac{1}{2}\mathfrak{c}_\omega(a_0\wedge a_1\wedge a_2\wedge a_3)=&\mathrm{Tr}_\omega(Pa_0(1-P)a_1Pa_2(1-P)a_3P)-\\
&-\mathrm{Tr}_\omega(Pa_0(1-P)a_3Pa_2(1-P)a_1P)=\\
=&\lim_{N\to \omega} \frac{1}{\log(N+2)}\sum_{k=0}^N \sum_{m=k}^\infty k a_{0,k}a_{2,m}\left(a_{1,-m}a_{3,-k}-a_{1,-k}a_{3,-m}\right),
\end{align*}
where 
$a_{j,k}$ denotes the $k$:th Fourier coefficient of $a_j$. 

Consider $c_0,c_1,c_2,c_3\in \ell^\infty(\N)$ defined from $c_2(j)=c_3(j)=1$ and $c_0(j)=c_1(j)=(-1)^j$. The functions $a_0=\mathfrak{W}_{1/4}(c_0)$, $a_1=\mathfrak{W}_{1/4}(c_1)^*$, $a_2=\mathfrak{W}_{1/4}(c_2)$, and $a_3=\mathfrak{W}_{1/4}(c_3)^*$ satisfy that $a_0, a_1^*,a_2,a_3^*$ admit holomorphic extensions to the unit disc, and the discussion above applies to give
\begin{align*}
\frac{1}{2}\mathfrak{c}_\omega(\mathfrak{W}_{1/4}(c_0)\wedge&\mathfrak{W}_{1/4}(c_1)^*\wedge\mathfrak{W}_{1/4}(c_2)\wedge\mathfrak{W}_{1/4}(c_3)^*)=\\
=&\lim_{N\to \omega\circ \log_2} \frac{1}{\log(2)N}\sum_{l=0}^N \sum_{m=l}^\infty 2^{l/2-m/2} \left((-1)^{m+l}-1\right)=\\
=&-\lim_{N\to \omega\circ \log_2} \frac{1}{\log(2)N}\sum_{l=0}^N\left(\frac{2^{-1/2}}{1+2^{-1/2}}+\frac{2^{-1/2}}{1-2^{-1/2}}\right)=\\
=&-\frac{1}{\log(2)}\left(\frac{2^{-1/2}}{1+2^{-1/2}}+\frac{2^{-1/2}}{1-2^{-1/2}}\right)=-\frac{2\sqrt{2}}{\log(2)}\neq 0.
\end{align*}
\end{proof}

\subsection{Non-triviality in $HC_\lambda^2(C^{2/3}(S^1\times S^1))$}

Let us end with a higher-dimensional computation, on the two-torus $S^1\times S^1$. In even dimensions, we need an even analogue of Theorem \ref{adpomopknad} which is proven ad verbatim to the odd case. We introduce the notation $E_N\subseteq \Z^2$ for the set of multiindices $\mathbbm{k}\in E_N$ such that $|\mathbbm{k}|^2\leq \lambda_N$ -- the $N$:th eigenvalue of the Laplacian.

\begin{thm}
\label{adnaodnaond}
Let $F$ denote the phase of the flat Dirac operator on $S^1\times S^1$, let $\pi$ denote the pointwise action of $C^{2/3}(S^1\times S^1)$ on $L^2(S^1\times S^1,\C^2)$ and consider the $(3,\infty)$-summable even bounded Fredholm module $(\pi, L^2(S^1\times S^1,\C^2),F)$ for $C^{2/3}(S^1\times S^1)$.

The associated cyclic $2$-cocycle $\mathfrak{c}_\omega\in Z^{2}_\lambda(C^{2/3}(S^1\times S^1))$ is given by 
\begin{align*}
c_\omega(a_0, a_1, a_2)=&\mathrm{Tr}_\omega(U[U^*,a_0][U,a_1][U^*,a_2])-\mathrm{Tr}_\omega(U^*[U,a_0][U^*,a_1][U,a_2])=\\
=& \lim_{N\to \omega} \frac{4i}{\log(2+N)}\sum_{\mathbbm{k}\in E_N} \sum_{\mathbbm{m},\mathbbm{n}\in \Z^2} \rho(\mathbbm{k},\mathbbm{m},\mathbbm{n})a_{0,-\mathbbm{n}}a_{1,\mathbbm{m}}a_{2,\mathbbm{n}-\mathbbm{m}},
\end{align*}
where $(a_{j,\mathbbm{m}})_{\mathbbm{m}\in \Z^2}$ denotes the Fourier coefficients of $a_j\in C^{2/3}(S^1\times S^1)$, and for $\mathbbm{k},\mathbbm{m},\mathbbm{n}\in \Z^2$
\begin{align*}
\rho(\mathbbm{k},\mathbbm{m},\mathbbm{n})=\frac{\mathbbm{m}\times \mathbbm{n}+(\mathbbm{m}-\mathbbm{n})\times \mathbbm{k}}{|\mathbbm{n}+\mathbbm{k}||\mathbbm{m}+\mathbbm{k}|}+\frac{\mathbbm{n}\times \mathbbm{k}}{|\mathbbm{k}||\mathbbm{k}+\mathbbm{n}|}+\frac{\mathbbm{k}\times \mathbbm{m}}{|\mathbbm{k}||\mathbbm{k}+\mathbbm{m}|}.
\end{align*}
\end{thm}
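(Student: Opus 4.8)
The plan is to reduce the computation to the odd-case formula of Theorem~\ref{adpomopknad}, appropriately adapted to the even case as indicated in the statement. First I would recall that for the flat Dirac operator on $S^1\times S^1$ acting on $L^2(S^1\times S^1,\C^2)$, the phase $F$ decomposes in the Fourier basis $\{e_{\mathbbm{k}}\}_{\mathbbm{k}\in\Z^2}$ (tensored with the spinor indices) into a block-diagonal operator whose $\mathbbm{k}$-th block is $\frac{1}{|\mathbbm{k}|}\begin{pmatrix} 0 & \bar{k}_1+i\bar{k}_2\\ k_1-ik_2 & 0\end{pmatrix}$-type, i.e.\ essentially multiplication by the unit vector $\mathbbm{k}/|\mathbbm{k}|$ read as a phase $U$ on the off-diagonal. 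Writing $F=\begin{pmatrix} 0 & U^*\\ U & 0\end{pmatrix}$ and using that $\mathfrak{A}$ acts diagonally, the supertrace $\mathrm{STr}_\omega(F[F,a_0][F,a_1][F,a_2])$ expands exactly into the difference $\mathrm{Tr}_\omega(U[U^*,a_0][U,a_1][U^*,a_2])-\mathrm{Tr}_\omega(U^*[U,a_0][U^*,a_1][U,a_2])$ displayed in the theorem; this is the first displayed equality and it is a purely formal $2\times 2$-matrix computation with the grading.

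Next I would insert the Fourier expansion $a_j=\sum_{\mathbbm{m}}a_{j,\mathbbm{m}}e_{\mathbbm{m}}$ and compute the matrix entries of $U[U^*,a_0][U,a_1][U^*,a_2]$ in the basis $\{e_{\mathbbm{k}}\}$. Since $U$ is a diagonal phase $U e_{\mathbbm{k}}=u(\mathbbm{k})e_{\mathbbm{k}}$ with $u(\mathbbm{k})=(k_1+ik_2)/|\mathbbm{k}|$, and multiplication by $e_{\mathbbm{m}}$ shifts $\mathbbm{k}\mapsto \mathbbm{k}+\mathbbm{m}$, each commutator $[U,a_j]$ has entries $a_{j,\mathbbm{m}}(u(\mathbbm{k}+\mathbbm{m})-u(\mathbbm{k}))$ between $e_{\mathbbm{k}}$ and $e_{\mathbbm{k}+\mathbbm{m}}$. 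Chaining four such factors along a cycle $\mathbbm{k}\to \mathbbm{k}-\mathbbm{n}\to\cdots\to\mathbbm{k}$ and taking the diagonal for the trace produces, after relabelling, a triple sum over $\mathbbm{k},\mathbbm{m},\mathbbm{n}$ with coefficient a product of four phase-differences times $a_{0,-\mathbbm{n}}a_{1,\mathbbm{m}}a_{2,\mathbbm{n}-\mathbbm{m}}$. Applying Theorem~\ref{adpomopknad} (even analogue) replaces the trace by $\lim_{N\to\omega}\frac{1}{\log(2+N)}\sum_{\mathbbm{k}\in E_N}$, giving the stated formula with some symbol $\tilde\rho(\mathbbm{k},\mathbbm{m},\mathbbm{n})$ built from the four phase differences; the factor $4i$ is bookkeeping from the spinor trace and the constant $c_n$ in the even Chern character.

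The main work, and the step I expect to be the genuine obstacle, is the algebraic simplification of the raw phase-difference product into the closed form $\rho(\mathbbm{k},\mathbbm{m},\mathbbm{n})$. One must expand $\big(u(\mathbbm{k}-\mathbbm{n})-u(\mathbbm{k})\big)\big(u(\mathbbm{k}-\mathbbm{n}+\mathbbm{m})-u(\mathbbm{k}-\mathbbm{n})\big)\big(\cdots\big)$ and use that only terms surviving the Dixmier trace (i.e.\ those decaying like $|\mathbbm{k}|^{-2}$ rather than faster) contribute; terms with extra decay lie in $\mathcal{L}^1$ and are annihilated by $\mathrm{Tr}_\omega$, which is precisely the mechanism of Theorem~\ref{somsdonadona}. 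Expanding $u(\mathbbm{k}+\mathbbm{v})-u(\mathbbm{k})=\frac{\mathbbm{v}}{|\mathbbm{k}|}+O(|\mathbbm{k}|^{-2})$ more carefully, with the leading anti-symmetric (cross-product) corrections $\frac{\mathbbm{v}\times\mathbbm{k}}{|\mathbbm{k}|^3}$-type, and keeping track of which second-order terms pair with which first-order terms to reach total homogeneity $-2$ in $\mathbbm{k}$, yields exactly the three fractions in $\rho$: one from the "diagonal" pairing of the two inner differences against the outer ones, and two cross terms pairing an inner difference with $\mathbbm{k}$ directly. This is a careful but finite asymptotic bookkeeping; once it is done the identification $\tilde\rho=\rho$ is immediate and the theorem follows. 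I would organize it by first recording the expansion $u(\mathbbm{k}+\mathbbm{v})=\frac{(k_1+v_1)+i(k_2+v_2)}{|\mathbbm{k}+\mathbbm{v}|}$ exactly, then grouping the four-fold product by total degree, discarding everything of degree $<-2$, and matching.
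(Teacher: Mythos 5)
Your first two steps coincide with the paper's proof: the block (off-diagonal) decomposition of $F$ in terms of the partial isometry $U$ acting diagonally in the Fourier basis by $Ue_{\mathbbm{k}}=\frac{\mathbbm{k}}{|\mathbbm{k}|}e_{\mathbbm{k}}$, the expansion of the supertrace into the difference of the two Dixmier traces, and the appeal to an even analogue of Theorem \ref{adpomopknad} to rewrite each trace as a logarithmic mean of diagonal matrix elements. The gap is in your final step, and it is a genuine one.

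The mechanism you propose for extracting $\rho$ is incorrect. For \emph{fixed} $\mathbbm{m},\mathbbm{n}$, the product of the phase differences along the cycle is $O(|\mathbbm{k}|^{-3})$, not $O(|\mathbbm{k}|^{-2})$: writing $\mathbbm{k}+\mathbbm{v}=|\mathbbm{k}+\mathbbm{v}|\,\e^{i(\theta+\phi_{\mathbbm{v}})}$ with $\phi_{\mathbbm{v}}=O(|\mathbbm{k}|^{-1})$, the whole expression reduces to $-\sin\phi_{\mathbbm{n}}+\sin\phi_{\mathbbm{m}}+\sin(\phi_{\mathbbm{n}}-\phi_{\mathbbm{m}})=O(|\phi|^{3})=O(|\mathbbm{k}|^{-3})$. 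So your rule ``keep only total homogeneity $-2$ in $\mathbbm{k}$, discard everything of lower degree'' discards \emph{everything} and returns $\rho=0$. (Note also that there are three differences, not four; the fourth factor is the undifferenced $U$ or $U^*$, contributing a unit phase.) The reason the Dixmier trace is nonetheless nonzero is not a degree-$(-2)$ tail in $\mathbbm{k}$ for each fixed frequency transfer, but the failure of absolute convergence of the double sum over $\mathbbm{m},\mathbbm{n}$ when the $a_j$ are merely H\"older -- exactly the lacunary mechanism behind \eqref{compmpmpnpanf}. The function $\rho$ is homogeneous of degree $0$ only \emph{jointly} in $(\mathbbm{k},\mathbbm{m},\mathbbm{n})$, as remarked after the theorem; the operator-level fact that trace-class corrections are annihilated by $\mathrm{Tr}_\omega$ cannot be applied term by term in the Fourier expansion the way you use it.

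Moreover, no asymptotic expansion is needed at all: the closed form of $\rho$ is an \emph{exact} identity for unit complex numbers. With $w=\mathbbm{k}/|\mathbbm{k}|$, $z=(\mathbbm{k}+\mathbbm{n})/|\mathbbm{k}+\mathbbm{n}|$, $u=(\mathbbm{k}+\mathbbm{m})/|\mathbbm{k}+\mathbbm{m}|$, one has
\[
w(\bar{w}-\bar{z})(z-u)(\bar{u}-\bar{w})=2i\,\mathrm{Im}\left(w\bar{z}+u\bar{w}+z\bar{u}\right)=2i\left(z\times w+w\times u+u\times z\right),
\]
and computing the imaginary parts of the three products gives exactly the three fractions in the statement. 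Replacing your asymptotic matching by this identity closes the proof; as written, your degree bookkeeping would fail.
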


The reader can note that $\rho$ defines a smooth function on $\R^6\setminus \{0\}$ homogeneous of degree $0$.

\begin{proof}
We use the Fourier basis $e_{\mathbbm{k}}(z)=(2\pi)^{-1}z^{\mathbbm{k}}$, $\mathbbm{k}\in \Z^2$, for $L^2(S^1\times S^1)$. The operator $F$ decomposes as 
$$F=\begin{pmatrix} 0& U\\ U^*& 0\end{pmatrix},$$
where $U$ is the partial isometry on $L^2(S^1\times S^1)$ that in the Fourier basis acts as $Ue_{(k_1,k_2)}=\frac{k_1+ik_2}{|k_1+ik_2|}e_{(k_1,k_2)}$. We can identify the multiindex $\mathbbm{k}=(k_1,k_2)\in \Z^2$ with the complex number $\mathbbm{k}=k_1+ik_2$, so $Ue_{\mathbbm{k}}=\frac{\mathbbm{k}}{|\mathbbm{k}|}e_{\mathbbm{k}}$. 

Using an even analogue of Theorem \ref{adpomopknad}, and a degree argument we have that 
\begin{align*}
c_\omega(a_0, a_1, a_2)=&\mathrm{Tr}_\omega(U[U^*,a_0][U,a_1][U^*,a_2])-\mathrm{Tr}_\omega(U^*[U,a_0][U^*,a_1][U,a_2])=\\
=& \lim_{N\to \omega} \frac{4i}{\log(2+N)}\sum_{\mathbbm{k}\in E_N} \sum_{\mathbbm{m},\mathbbm{n}} \rho(\mathbbm{k},\mathbbm{m},\mathbbm{n})a_{0,-\mathbbm{n}}a_{1,\mathbbm{m}}a_{2,\mathbbm{n}-\mathbbm{m}},
\end{align*}
where 
\begin{align*}
\rho(\mathbbm{k},\mathbbm{m},\mathbbm{n})=&\frac{1}{2i}\frac{\mathbbm{k}}{|\mathbbm{k}|}\left(\frac{\overline{\mathbbm{k}}}{|\mathbbm{k}|}-\frac{\overline{\mathbbm{k}+\mathbbm{n}}}{|\mathbbm{k}+\mathbbm{n}|}\right)\left(\frac{\mathbbm{k}+\mathbbm{n}}{|\mathbbm{k}+\mathbbm{n}|}-\frac{\mathbbm{k}+\mathbbm{m}}{|\mathbbm{k}+\mathbbm{m}|}\right)\left(\frac{\overline{\mathbbm{k}+\mathbbm{m}}}{|\mathbbm{k}+\mathbbm{m}|}-\frac{\overline{\mathbbm{k}}}{|\mathbbm{k}|}\right)=\\
=&\mathrm{Im}\left(\frac{\mathbbm{k}}{|\mathbbm{k}|}\frac{\overline{\mathbbm{k}+\mathbbm{n}}}{|\mathbbm{k}+\mathbbm{n}|}+\frac{\mathbbm{k}+\mathbbm{m}}{|\mathbbm{k}+\mathbbm{m}|}\frac{\overline{\mathbbm{k}}}{|\mathbbm{k}|}+\frac{\mathbbm{k}+\mathbbm{n}}{|\mathbbm{k}+\mathbbm{n}|}\frac{\overline{\mathbbm{k}+\mathbbm{m}}}{|\mathbbm{k}+\mathbbm{m}|}\right)=\\
=&\frac{\mathbbm{m}\times \mathbbm{n}+(\mathbbm{m}-\mathbbm{n})\times \mathbbm{k}}{|\mathbbm{n}+\mathbbm{k}||\mathbbm{m}+\mathbbm{k}|}+\frac{\mathbbm{n}\times \mathbbm{k}}{|\mathbbm{k}||\mathbbm{k}+\mathbbm{n}|}+\frac{\mathbbm{k}\times \mathbbm{m}}{|\mathbbm{k}||\mathbbm{k}+\mathbbm{m}|}.
\end{align*}
Here we use the notation $z\times w=\mathrm{Im}(\bar{z}w)=-w\times z$ and that for complex numbers $z,w,u$ of modulus $1$,
$$w(\bar{w}-\bar{z})(z-u)(\bar{u}-\bar{w})=2i\mathrm{Im}(w\bar{z}+u\bar{w}+z\bar{u})=2i(z\times w+w\times u+u\times z).$$
\end{proof}

\begin{conj}
The cyclic cohomology class $[\mathfrak{c}_\omega]\in HC^{2}_\lambda(C^{2/3}(S^1\times S^1))$ and the associated Hochschild cohomology class $[\mathfrak{h}_\omega]\in HH^{3}(C^{2/3}(S^1\times S^1))$, constructed from the Fredholm module of Theorem \ref{adnaodnaond}, are non-trivial.
\end{conj}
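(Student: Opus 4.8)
The plan is to imitate the strategy that worked in Theorems \ref{dsomaodand} and \ref{fourtedo}: rather than attempting to show directly that the cocycles $[\mathfrak{c}_\omega]$ and $[\mathfrak{h}_\omega]$ are non-zero in cohomology, exhibit an explicit cycle in the range of the relative Chern character $\ch_3^{\rm rel}$ on which $\mathfrak{c}_\omega$ evaluates non-trivially; by Theorem \ref{apakdpadj} this simultaneously establishes non-triviality of both classes and the non-trivial pairing with $K_3^{\rm rel}(C^{2/3}(S^1\times S^1))$ and $K_3^{\rm alg}(C^{2/3}(S^1\times S^1))$. The natural cycles to test are the antisymmetrizations $a_0\wedge a_1\wedge a_2$ of triples built from the maps $\mathfrak{W}_\alpha$ of Littlewood-Paley type (or their torus analogues $\mathfrak{W}_\alpha\otimes \mathfrak{W}_\alpha$, $\mathfrak{W}_\alpha(c)(z_1)\mathfrak{W}_\alpha(c')(z_2)$, etc.), since these are precisely the functions whose Fourier support is lacunary and for which the singular trace formula \eqref{compmpmpnpanf} and its even analogue can be summed in closed form.

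The key steps I would carry out are: (1) Specialize the formula of Theorem \ref{adnaodnaond} to triples $a_0,a_1,a_2\in C^{2/3}(S^1\times S^1)$ whose Fourier support lies in a single quadrant of $\Z^2$ (so that many of the $\rho$-terms collapse, just as holomorphicity collapsed the one-variable formula to $\mathrm{Tr}_\omega(Pa_0(1-P)a_1\cdots)$); concretely pick $a_0,a_2$ supported in frequencies with $\mathbbm{k}$ in a fixed half-plane and $a_1^*$ likewise, which forces a definite sign on the relevant $\mathbbm{k}\times\mathbbm{m}$-type brackets. (2) Choose the coefficient sequences in the $\mathfrak{W}_{2/3}$-functions to be lacunary along well-separated dyadic frequencies $2^{k_1}$ in the first circle and $2^{k_2}$ in the second, with alternating signs $(-1)^{k_1+k_2}$ as in the proof of Theorem \ref{fourtedo}, so that the triple sum over $\mathbbm{k}\in E_N,\ \mathbbm{m},\mathbbm{n}\in\Z^2$ factors (up to controllable errors) into a product of two geometric-type sums, each of which converges after division by $\log(2+N)$ to an explicit nonzero constant. (3) Verify that the resulting limit along $\omega$ is independent of the extended limit (because the partial sums divided by $\log(2+N)$ actually converge) and is nonzero, giving $\mathfrak{c}_\omega(a_0\wedge a_1\wedge a_2)\neq 0$. (4) Invoke Theorem \ref{apakdpadj} together with the fact that $\ch_3^{\rm rel}$ of a higher Loday/Steinberg symbol hits $a_0\wedge a_1\wedge a_2$ (as in \cite[Section 5.3]{kaadcomp}) and that such a symbol lies in $\ker(K_3^{\rm alg}\to K_3^{\rm top})=\im(K_3^{\rm rel}\to K_3^{\rm alg})$ precisely because the exponentials $\mathrm{e}^{\mathfrak{W}_{2/3}(\cdot)}$ are topologically trivial; conclude non-triviality of $[\mathfrak{h}_\omega]$ via $B[\mathfrak{h}_\omega]=2[\mathfrak{c}_\omega]$ from Theorem \ref{somsdonadona}. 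Vanishing on $h^{2/3}(S^1\times S^1)$ is immediate from the last sentence of (the even analogue of) Theorem \ref{adpomopknad} since $3>2-1$.

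The main obstacle I expect is step (2): in the one-variable computations the scalar $k$ in the weight $k\,a_{0,k}a_{2,m}(\cdots)$ together with the weights $2^{-k/2}$ produced a clean telescoping/geometric sum, but here the homogeneous degree-zero kernel $\rho(\mathbbm{k},\mathbbm{m},\mathbbm{n})$ depends on the \emph{directions} of $\mathbbm{k},\mathbbm{k}+\mathbbm{m},\mathbbm{k}+\mathbbm{n}$ and not merely on scalar frequencies, so after restricting to lacunary supports one must understand the asymptotics of $\sum_{\mathbbm{k}\in E_N}\rho(\mathbbm{k},2^{k_1}\pm i2^{k_2},\ldots)$ as the eigenvalue cutoff $\lambda_N\to\infty$. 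The cleanest route is probably to choose the supports of $a_1$ and $a_2$ in \emph{fixed} frequencies (finitely many) while letting only $a_0$ — equivalently only $\mathbbm{n}$ or the running index — be lacunary; then for fixed $\mathbbm{m},\mathbbm{n}$ one has $\rho(\mathbbm{k},\mathbbm{m},\mathbbm{n})\to$ a nonzero constant as $|\mathbbm{k}|\to\infty$ in a cone of positive density, so $\sum_{\mathbbm{k}\in E_N}\rho$ grows like $\#E_N\sim\lambda_N$, which is the wrong order against $\log(2+N)$ unless it is arranged to cancel down to a logarithm — so in fact one wants the opposite: keep $a_0$ with a few frequencies and make $a_1,a_2$ lacunary with their dyadic weights $2^{-|\cdot|/3}$ providing the summability, so that the $\mathbbm{k}$-sum over each dyadic annulus contributes an $O(1)$ amount and the number of annuli below $\lambda_N$ is $\sim\log\lambda_N\sim\log N$. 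Getting this bookkeeping right — identifying which index carries the lacunary weights, which carries finitely many frequencies, and checking the angular average of $\rho$ over dyadic annuli is nonzero — is the crux; the remaining estimates (tail bounds from Hölder membership of $\mathfrak{W}_{2/3}(c)$, and the $\log$-averaging) are routine given Proposition \ref{asifnaoidnadi0n} and the Dixmier-trace calculus already used in the excerpt.
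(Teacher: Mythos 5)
This statement is labelled as a \emph{Conjecture} in the paper: the authors give no proof of it, and their own explicit non-vanishing computations (Theorems \ref{dsomaodand} and \ref{fourtedo}) stop at the odd, one-dimensional case of $S^1$, where the kernel appearing in the Dixmier-trace formula depends only on scalar frequencies and the lacunary sums telescope. So there is no argument in the paper to compare yours against, and the question is whether your proposal actually closes the gap. It does not. Your steps (1)--(4) are a reasonable strategy outline (and the degree bookkeeping via Theorem \ref{apakdpadj} with $n=3$, $p=2$, $B[\mathfrak{h}_\omega]=2[\mathfrak{c}_\omega]$ is correct), but the decisive step --- exhibiting concrete $a_0,a_1,a_2\in C^{2/3}(S^1\times S^1)$ in the range of a Loday symbol and proving that the resulting triple sum, divided by $\log(2+N)$, converges to a nonzero limit --- is exactly the step you defer, and you visibly change your mind mid-paragraph about which of the three factors should carry the lacunary Fourier support. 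That is not a fixable presentational issue; it is the entire content of the conjecture.

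Concretely, the obstruction you half-identify is real and is why this remains open. The kernel $\rho(\mathbbm{k},\mathbbm{m},\mathbbm{n})$ is homogeneous of degree zero in all variables and is built from cross products of \emph{directions}, so for fixed $\mathbbm{m},\mathbbm{n}$ the sum $\sum_{\mathbbm{k}\in E_N}\rho(\mathbbm{k},\mathbbm{m},\mathbbm{n})$ involves an angular average of an odd function of the direction of $\mathbbm{k}$ (note $\rho(-\mathbbm{k},\mathbbm{m},\mathbbm{n})$-type symmetries and the antisymmetry $z\times w=-w\times z$), so the leading $O(\lambda_N)$ term can cancel entirely; whether the subleading term is logarithmic with a computable, nonzero coefficient after antisymmetrization over $a_0\wedge a_1\wedge a_2$ --- and whether it can be made nonzero by a choice of lacunary coefficient sequences as in Theorem \ref{fourtedo} --- is precisely what must be proved and what neither you nor the paper establishes. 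Until that angular/dyadic asymptotic analysis is carried out (or an independent argument, e.g.\ a nontrivial pairing with an explicit class in $K_3^{\rm alg}(C^{2/3}(S^1\times S^1))$, is supplied), the statement remains a conjecture and your text should be presented as a proof strategy, not a proof.
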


\end{document}